\documentclass{amsart}
\usepackage{amsmath, amssymb, amsthm}
\usepackage{color}
\usepackage{hyperref}
\usepackage[shortlabels]{enumitem}
\usepackage[table,dvipsnames]{xcolor}
\usepackage{diagbox}
\usepackage{nicefrac} %

\newcommand{\defn}[1]{\emph{\color{blue} #1}}
\newcommand{\floor}[1]{\left\lfloor {#1} \right\rfloor}
\newcommand{\ceil}[1]{\left\lceil {#1} \right\rceil}
\newcommand{\sprod}[2]{\langle {#1} , {#2} \rangle} 
\newcommand{\set}[2]{\ensuremath{\left\{#1\,\middle|\,#2\right\}}}

\newcommand{\Bset}[2]{\ensuremath{\Big\{#1\,\Big|\,#2\Big\}}}
\newcommand{\uno}{\mathbf{1}}

\newtheorem{theorem}{Theorem}[section]
\newtheorem{lemma}[theorem]{Lemma}
\newtheorem{cor}[theorem]{Corollary}
\newtheorem{proposition}{Proposition}[section]

\theoremstyle{remark}
\newtheorem{example}{Example}[section]
\newtheorem{remark}[theorem]{Remark}

\theoremstyle{definition}
\newtheorem{definition}{Definition}[section]

\def\RR{\mathbb{R}}
\def\NN{\mathbb{N}}
\def\bfe{\mathbf{e}}
\def\bfp{\mathbf{p}}
\def\bfq{\mathbf{q}}
\def\bfv{\mathbf{v}}

\def\bfn{\mathbf{n}}
\def\bfm{\mathbf{m}}

\newcommand{\ssm}{\smallsetminus}

\newcommand{\Knk}{\Kuh{n}{k}}%
\newcommand{\Kuh}[2]{K_{#1}^{(#2)}}%
\newcommand{\Kumh}[3]{\Kuh{#1 \times #2}{#3}}%

\newcommand{\Hnk}{\Hs{n}{k}}%
\newcommand{\Hs}[2]{\Delta_{#1,#2}}%

\newcommand{\cd}{\mathsf{cd}}%
\newcommand{\scd}{\mathsf{scd}}%

\newcommand{\PC}[1]{S_{#1}}%
\newcommand{\Ppi}{\PC{\pi}}

\newcommand{\hep}[1]{\mathcal P({#1})}%

\DeclareMathOperator{\conv}{conv}
\DeclareMathOperator{\intr}{int}
\DeclareMathOperator{\relint}{relint}
\DeclareMathOperator{\pyr}{pyr}

\usepackage{todonotes}

\title[The hypersimplicial Van Kampen-Flores Theorem]{The convex dimension of %
hypergraphs and the hypersimplicial Van Kampen-Flores Theorem}

\author{Leonardo Mart\'inez-Sandoval}
\address[L. Mart\'inez-Sandoval]{Sorbonne Universit\'e, Institut de Math\'ematiques de Jussieu - Paris Rive Gauche (UMR 7586), Paris, France and Faculty of Sciences, National Autonomous University of Mexico, Mexico City, Mexico}
\email[L.~Mart\'inez-Sandoval]{leomtz@ciencias.unam.mx}
\thanks{Research supported by the grant ANR-17-CE40-0018 of the French National Research Agency ANR (project CAPPS)}

\author{Arnau Padrol}
\address[A. Padrol]{Sorbonne Universit\'e, Institut de Math\'ematiques de Jussieu - Paris Rive Gauche (UMR 7586), Paris, France}
\email[A.~Padrol]{arnau.padrol@imj-prg.fr}

\begin{document}

\begin{abstract}

    The convex dimension of a $k$-uniform hypergraph is the smallest dimension~$d$ for which there is an injective mapping of its vertices into~$\RR^d$ such that the set of $k$-barycenters of all hyperedges is in convex position.

    We completely determine the convex dimension of complete $k$-uniform hypergraphs, which settles an open question by Halman, Onn and Rothblum, who solved the problem for complete graphs. We also provide lower and upper bounds for the extremal problem of estimating the maximal number of hyperedges of $k$-uniform hypergraphs on $n$ vertices with convex dimension~$d$. 
    
    To prove these results, we restate them in terms of affine projections that preserve the vertices of the hypersimplex. More generally, we provide a full characterization of the projections that preserve its $i$-dimensional skeleton. In particular, we obtain a hypersimplicial generalization of the linear van Kampen-Flores theorem: 
    for each $n$, $k$ and $i$ we determine onto which dimensions can the $(n,k)$-hypersimplex be linearly projected while preserving its $i$-skeleton.
    
    Our results have direct interpretations in terms of $k$-sets and $(i,j)$-partitions, and are closely related to the problem of finding large convexly independent subsets in Minkowski sums of $k$ point sets.
\end{abstract}

\maketitle

\section{Introduction}

Motivated by problems in convex combinatorial optimization~\cite{OnnRothblum2004}, Halman, Onn and Rothblum introduced the concept of convex dimension of uniform hyper\-graphs \cite{HalmanOnnRothblum2007}.  A \defn{$k$-uniform hypergraph} is a pair~$H=(V,E)$ with~$E\subseteq \binom{V}{k}$; a \defn{convex embedding} of~$H$ into~$\RR^d$
is an injective map $f:V\to \RR^d$ such that
the 
\defn{$k$-barycenters} (i.e.\ the points of the form $\frac{1}{k}\sum_{v\in e}f(v)$ indexed by the hyperedges $e\in E$)
 are in convex position (i.e.\ no point is a convex combination of the others);
and the \defn{convex dimension} of~$H$, denoted~\defn{$\cd(H)$}, is the minimal~$d$ for which a convex embedding of~$H$ into~$\RR^d$ exists. 

Their article focused on graphs, the $k=2$ case. They studied the problem of determining the convex dimension for specific families of graphs: paths, cycles, complete graphs and bipartite graphs. They also investigated the extremal problem of determining the maximum number of edges that a graph on $n$ vertices and fixed convex dimension can have. The latter problem has been studied afterwards by several authors, in particular because of its strong relation with the problem of determining large convex subsets in Minkowski sums~\cite{Bilka2010,EisenbrandPachRothvossSopher2008}. Indeed, convex embeddings of subhypergraphs of complete $k$-partite $k$-uniform hypergraphs correspond to subsets in convex position inside the Minkowski sum of $k$ sets of points. Diverse variants of the case $k=2$ have been considered in the plane~\cite{Bilka2010,EisenbrandPachRothvossSopher2008,GarciaMarcoKnauer2016,SkomraThomasse,Tiwary2014}, and also in~$\RR^3$~\cite{SwanepoelValtr2010}.

For $k>2$, the only result of which we are aware of is the upper bound $\cd(H)\leq 2k$ for any $k$-uniform hypergraph~$H$, proved by Halman et al.\ by mapping the vertices onto points on the moment curve in $\RR^{2k}$~\cite{HalmanOnnRothblum2007}. 

Our first result is the complete determination of the convex dimension of~$\Knk:=([n],\binom{[n]}{k})$, the \defn{complete $k$-uniform hypergraph} on~$n$ vertices, for any $k$, $1\leq k \leq n-1$.

\begin{theorem}
\label{thm:main}
Given positive integers $n$ and $k$ such that $2\leq k\leq n-2$, we have that 
	\[
	\cd(K_n^{(k)})=
		\begin{cases}
			2k &\text{if $n\geq 2k+2$,}\\
			n-2 &\text{if $n\in \{2k-1,2k,2k+1\}$,}\\
			2n-2k &\text{if $n\leq 2k-2$.}
		\end{cases}
	\]

Also, $\cd(K_2^{(1)})=1$ and $\cd(K_n^{(1)})=\cd(K_n^{(n-1)})=2$ for $n\geq 3$.
\end{theorem}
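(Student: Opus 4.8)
The plan is to reformulate the problem geometrically, treat the upper and lower bounds separately, and observe that the three cases assemble into the single identity $\cd(\Knk)=\min(2k,\,2(n-k),\,n-2)$, valid throughout the range $2\le k\le n-2$. First I would record the dictionary between convex embeddings and projections. Fixing $f(i)=\bfp_i\in\RR^d$ and setting $\pi(\bfe_i)=\bfp_i$ extends to a linear map $\pi\colon\RR^n\to\RR^d$ whose action on the vertex $\bfe_S=\sum_{i\in S}\bfe_i$ of the hypersimplex $\Hnk$ is $k$ times the $k$-barycenter of $S$. Thus a convex embedding into $\RR^d$ is exactly a linear projection of $\Hnk$ that keeps every vertex extreme, and $\cd(\Knk)$ is the least $d$ admitting one. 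Dually, $\bfe_S$ survives iff some functional $c$ has $S$ as its top-$k$ coordinate set, so the condition is that the point set $\set{\bfp_i}{i\in[n]}\subset\RR^d$ has \emph{every} $k$-subset as a $k$-set (separable as the maximal side by a hyperplane); this is the $k$-set and Minkowski-sum interpretation. Complementation $x\mapsto\uno-x$ gives an affine isomorphism $\Hnk\cong\Hs{n}{n-k}$, hence $\cd(\Knk)=\cd(\Kuh{n}{n-k})$, which lets me assume $k\le n/2$ and collapses the first and third cases into one.

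For the upper bounds I would exhibit two constructions. Placing the $\bfp_i$ on the moment curve, $\bfp_i=(t_i,t_i^2,\dots,t_i^{2k})$ with distinct $t_i$, the degree-$2k$ polynomial $-\prod_{j\in S}(t-t_j)^2$ realizes an affine functional that is uniquely maximized over the barycenters at $S$; hence every $k$-subset is a $k$-set and $\cd(\Knk)\le 2k$ (this recovers the bound of Halman et al.), and by complementation $\cd(\Knk)\le 2(n-k)$. For the bound $\cd(\Knk)\le n-2$ I would project $\Hnk$ along a single generic direction: working in $\uno^\perp$, a nonzero $v$ lies in the edge cone $T(\bfe_S)=\operatorname{cone}\set{\bfe_j-\bfe_i}{i\in S,\ j\notin S}$ of some $k$-subset iff $\#\{v<0\}\le k\le \#\{v\le 0\}$; choosing $v$ with no vanishing coordinate and $\#\{v<0\}\notin\{k,n-k\}$ (possible for $n\ge 4$) makes both $v$ and $-v$ avoid all these cones, so the projection keeps every vertex. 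Taking the minimum of the three gives $\cd(\Knk)\le\min(2k,2(n-k),n-2)$.

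The heart of the argument is the matching lower bound $\cd(\Knk)\ge\min(2k,2(n-k),n-2)$. Since $\Knk$ is a subhypergraph of $\Kuh{n+1}{k}$, the convex dimension is monotone in $n$, so it suffices to treat the critical values of $n$ and invoke monotonicity and complementation elsewhere. By the tangent-cone criterion, a projection of $\Hnk$ onto $\RR^d$ preserving all vertices is equivalent to a subspace $L\subseteq\uno^\perp$ of dimension $(n-1)-d$ that meets every $T(\bfe_S)$ only at the origin, i.e.\ every nonzero $v\in L$ satisfies $\#\{v<0\}\ge k+1$ or $\#\{v>0\}\ge n-k+1$. I must therefore bound the dimension of such a \emph{transversal} subspace from above by $\max(n-1-2k,\,2k-n-1,\,1)$. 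This is where a van Kampen--Flores--type obstruction enters: the sign-vector map $v\mapsto\operatorname{sgn}(v)$ is $\mathbb{Z}_2$-equivariant on the sphere $S(L)$, and transversality produces an equivariant map from $S(L)$ into the complement of the union of tangent cones, whose $\mathbb{Z}_2$-homotopy type is a sphere of the controlled dimension; Borsuk--Ulam then forces $\dim L$ down to the claimed value. The cleanest instance is $n=2k$, where ``caught'' means perfectly balanced ($\#\{v<0\}=\#\{v>0\}=k$) and the two surviving sign classes are exchanged by the antipodal map, giving an equivariant map $S(L)\to S^{0}$ and hence $\dim L\le 1$, i.e.\ $\cd=n-2$.

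The main obstacle is precisely this topological step in its general, non-antipodal form: identifying the equivariant homotopy type of the complement of the tangent-cone union for arbitrary $n$ and $k$, and matching its dimension with the thresholds $2k$, $2(n-k)$ and $n-2$. Two technical points demand care: the sign map is only locally constant away from the coordinate walls, so the equivariant map must be assembled on all of $S(L)$ by controlling what happens as coordinates vanish; and non-generic configurations (degenerate Gale data) must be reduced to the generic case. The remaining boundary cases are elementary: for $k=1$ the barycenters are the points themselves, so $\cd(\Kuh{2}{1})=1$ while $n\ge 3$ points require and admit the plane, giving $\cd(\Kuh{n}{1})=2$, and $\cd(\Kuh{n}{n-1})=2$ follows by complementation.
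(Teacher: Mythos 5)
Your reduction to projections of $\Hnk$ that strictly preserve all vertices, the complementation symmetry $\cd(\Knk)=\cd(K_n^{(n-k)})$, and both upper-bound constructions are correct and essentially match the paper: the moment-curve functional $-\prod_{j\in S}(t-t_j)^2$ is the Halman--Onn--Rothblum argument (cyclic polytopes are $k$-neighborly), and your codimension-one direction $v$ with no zero coordinate and $\#\{v<0\}\notin\{k,n-k\}$ is exactly the paper's non-almost-neighborly configuration (Lemma~\ref{lem:almostneighborlyworks}) seen in Gale-dual disguise. Your treatment of the self-complementary case $n=2k$ is also sound: the sets $\{v\in S(L):\#\{v<0\}\ge k+1\}$ and $\{v\in S(L):\#\{v>0\}\ge k+1\}$ are open, disjoint, cover $S(L)$, and are exchanged by the antipodal map, so $S(L)$ is disconnected and $\dim L\le 1$, giving $\cd\ge n-2$.

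The genuine gap is the lower bound in the remaining critical cases, $\cd(K_{2k+1}^{(k)})\ge 2k-1$ and $\cd(K_{2k+2}^{(k)})\ge 2k$ (monotonicity in $n$ and complementation reduce everything to these two together with $n=2k$). You defer exactly this to an equivariant-topology step --- identifying the $\mathbb{Z}_2$-homotopy type of the complement of the union of tangent cones and invoking Borsuk--Ulam --- and you yourself flag it as ``the main obstacle''; it is never carried out, and once the two sign classes are no longer exchanged by antipodality there is no evident reason the complement should have the equivariant homotopy type of a sphere of the controlled dimension. This deferred step is precisely the hard content of the theorem, so the proposal is incomplete. The paper closes this gap without any topology: by Gale duality (Lemma~\ref{lem:Gale}), your transversality condition on the kernel $L$ says that every open linear halfspace of $\ker\pi$ contains at least $k+1$ vectors of the Gale dual $M$, i.e.\ that $\Ppi$ is $k$-neighborly, \emph{or} that the non-almost-neighborly alternative of Proposition~\ref{prop:charcod1} occurs; Proposition~\ref{prop:charcod2} and Corollary~\ref{cor:characterization} then show, by a rotating-line (pivoting-hyperplane) argument on the centrally symmetric configuration $M\cup N$, that in codimension at least $2$ the second alternative can never occur, so $\Ppi$ must be $k$-neighborly; finally the elementary Radon-theorem fact that no $d$-polytope other than the simplex is more than $\floor{d/2}$-neighborly forces $d\ge 2k$, and the codimension-one dichotomy settles $n\in\{2k-1,2k,2k+1\}$. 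If you wish to finish the proof within your framework, that rotating-line argument is the missing combinatorial substitute for the Borsuk--Ulam step you were hoping for.
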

This matches and extends the results for $k=2$ in \cite{HalmanOnnRothblum2007}, where it is proved that $\cd(K_n)=4$ for $n\geq 6$. Table \ref{tab:values} shows the explicit values of $\cd(K_n^{(k)})$ given by Theorem~\ref{thm:main} for small values of $n$ and $k$. Note the vertical symmetry in each column.

\begin{table}[htpb]
	\centering
	\begin{tabular}{|c|ccccccccccccccccc|}
		\hline
		$k\setminus n$ &                 2  &                 3  &                 4  &                 5  &                 6  &                 7  &                 8  &                 9  &                 10 &                 11 &                  12 &                  13 &                  14 &                  15 &                  16 &                  17 &                  18\\
		\hline
		1 &  \cellcolor{green}1 &  \cellcolor{green}2 &  \cellcolor{pink}2 &  \cellcolor{pink}2 &  \cellcolor{pink}2 &  \cellcolor{pink}2 &  \cellcolor{pink}2 &  \cellcolor{pink}2 &  \cellcolor{pink}2 &  \cellcolor{pink}2 &   \cellcolor{pink}2 &   \cellcolor{pink}2 &   \cellcolor{pink}2 &   \cellcolor{pink}2 &   \cellcolor{pink}2 &   \cellcolor{pink}2 &   \cellcolor{pink}2 \\
		2 &                    &  \cellcolor{green}2 &\cellcolor{yellow}2 &\cellcolor{yellow}3 &  \cellcolor{pink}4 &  \cellcolor{pink}4 &  \cellcolor{pink}4 &  \cellcolor{pink}4 &  \cellcolor{pink}4 &  \cellcolor{pink}4 &   \cellcolor{pink}4 &   \cellcolor{pink}4 &   \cellcolor{pink}4 &   \cellcolor{pink}4 &   \cellcolor{pink}4 &   \cellcolor{pink}4 &   \cellcolor{pink}4\\
		3 &                    &                    &  \cellcolor{pink}2 &\cellcolor{yellow}3 &\cellcolor{yellow}4 &\cellcolor{yellow}5 &  \cellcolor{pink}6 &  \cellcolor{pink}6 &  \cellcolor{pink}6 &  \cellcolor{pink}6 &   \cellcolor{pink}6 &   \cellcolor{pink}6 &   \cellcolor{pink}6 &   \cellcolor{pink}6 &   \cellcolor{pink}6 &   \cellcolor{pink}6 &   \cellcolor{pink}6 \\
		4 &                    &                    &                    &  \cellcolor{pink}2 &  \cellcolor{pink}4 &\cellcolor{yellow}5 &\cellcolor{yellow}6 &\cellcolor{yellow}7 &  \cellcolor{pink}8 &  \cellcolor{pink}8 &   \cellcolor{pink}8 &   \cellcolor{pink}8 &   \cellcolor{pink}8 &   \cellcolor{pink}8 &   \cellcolor{pink}8 &   \cellcolor{pink}8 &   \cellcolor{pink}8\\
		5 &                    &                    &                    &                    &  \cellcolor{pink}2 &  \cellcolor{pink}4 &  \cellcolor{pink}6 &\cellcolor{yellow}7 &\cellcolor{yellow}8 &\cellcolor{yellow}9 &  \cellcolor{pink}10 &  \cellcolor{pink}10 &  \cellcolor{pink}10 &  \cellcolor{pink}10 &  \cellcolor{pink}10 &  \cellcolor{pink}10 &  \cellcolor{pink}10 \\
		6 &                    &                    &                    &                    &                    &  \cellcolor{pink}2 &  \cellcolor{pink}4 &  \cellcolor{pink}6 &  \cellcolor{pink}8 &\cellcolor{yellow}9 &\cellcolor{yellow}10 &\cellcolor{yellow}11 &  \cellcolor{pink}12 &  \cellcolor{pink}12 &  \cellcolor{pink}12 &  \cellcolor{pink}12 &  \cellcolor{pink}12\\
		7 &                    &                    &                    &                    &                    &                    &  \cellcolor{pink}2 &  \cellcolor{pink}4 &  \cellcolor{pink}6 &  \cellcolor{pink}8 &  \cellcolor{pink}10 &\cellcolor{yellow}11 &\cellcolor{yellow}12 &\cellcolor{yellow}13 &  \cellcolor{pink}14 &  \cellcolor{pink}14 &  \cellcolor{pink}14 \\
		8 &                    &                    &                    &                    &                    &                    &                    &  \cellcolor{pink}2 &  \cellcolor{pink}4 &  \cellcolor{pink}6 &   \cellcolor{pink}8 &  \cellcolor{pink}10 &  \cellcolor{pink}12 &\cellcolor{yellow}13 &\cellcolor{yellow}14 &\cellcolor{yellow}15 &  \cellcolor{pink}16 \\
		9 &                    &                    &                    &                    &                    &                    &                    &                    &  \cellcolor{pink}2 &  \cellcolor{pink}4 &  \cellcolor{pink}6  &   \cellcolor{pink}8 &  \cellcolor{pink}10 &  \cellcolor{pink}12 &  \cellcolor{pink}14 &\cellcolor{yellow}15 &\cellcolor{yellow}16  \\
		\hline
		\end{tabular}
		 \bigskip

            \caption{First values of $\cd(K_n^{(k)})$. Green values correspond to exceptional cases with small values of $n$ and $k$. Yellow values correspond to the cases $n\in\{2k-1,2k,2k+1\}$, when $k\geq 2$. Red values correspond to the cases $n\geq 2k+2$ or $n\leq 2k-2$.}\label{tab:values}
\end{table}

In a different context, the convex-hull of all $k$-barycenters of a point-set~$S$ has also been studied under the name of \defn{$k$-set polytope}~and denoted by \defn{$P_k(S)$} \cite{AndrzejakWelzl2003,EdelsbrunnerValtrWelzl1997} in relation to the study of $k$-sets, $j$-facets and $(i,j)$-partitions~\cite{AndrzejakWelzl2003, Wagner2008}. Indeed, vertices of $P_k(S)$ correspond bijectively to \defn{$k$-sets} of~$S$; that is, to $k$-element subsets that can be separated by a hyperplane (higher-dimensional faces of $P_k(S)$ have similar interpretations, which we explain in Section~\ref{sec:ksets}). In this language, Theorem~\ref{thm:main} determines for which dimensions we can find point sets of cardinality~$n$ for which all $k$-barycenters are vertices of the $k$-set polytope; that is, point-sets for which every $k$-element subset can be separated with a linear hyperplane.

We provide a polyhedral proof of Theorem~\ref{thm:main}. Namely, we reformulate the existence of a convex embedding of $\Knk$ into $\RR^d$ in terms of affine projections that strictly preserve the vertices of the \defn{hypersimplex $\Hnk$}, that is, the polytope whose vertices are the $\binom{n}{k}$ incidence vectors of $k$-subsets of $[n]$. This polyhedral formulation is closer to the original set-up of convex combinatorial optimization~\cite{OnnRothblum2004}.

Hypersimplices are a widely studied family of polytopes that arise naturally in very diverse contexts, and there has been a recent interest on hypersimplex projections~\cite{OlarteSantos, Postnikov2019} motivated by a result of Galashin~\cite{Galashin2018} who showed that certain subdivisions induced by hypersimplex projections are in bijection with reduced plabic graphs~\cite{OhPostnikovSpeyer2015}, used to describe the stratification of the totally nonnegative Grassmannian~\cite{Postnikov06}. 

As we shall see, Theorem~\ref{thm:main} is a particular case of a more general result. Let \defn{$d=d(n,k,i)$} be the smallest dimension for which we can find a projection $\pi:\Hnk\to \RR^d$ that strictly preserves the $i$-dimensional skeleton of $\Hnk$. We determine the values of $d(n,k,i)$ in Theorem~\ref{thm:skeleton}, which is proved in
Section~\ref{sec:hypersimplicial-neighborly}
with the framework used by Sanyal when studying the number of vertices of Minkowski sums~\cite{Sanyal2009}, based on Ziegler's projection lemma~\cite{Ziegler2004}.

\begin{theorem}
	\label{thm:skeleton}
	Given positive integers $n$, $k$, $i$ such that $1\leq k\leq n-1$ and $0\leq i\leq n-1$, the value of $d(n,k,i)$ is determined as follows.
		\[
		d(n,k,i)=
			\begin{cases}
				2k+2i &\text{if $n\geq 2k+2i+2$},\\
				2n-2k+2i &\text{if $n\leq 2k-2i-2$},\\
                                n-1 &\text{if $2k-2i-1\leq n \leq 2k+2i+1$, $k\in A_{n,i}$},\\
				n-2 &\text{if $2k-2i-1\leq n \leq 2k+2i+1$, $k\notin A_{n,i}$}.\\
			\end{cases}
		\]
	Where
	\begin{align*}
		A_{n,i} &= \{1,2,\ldots,i+1\}\cup \{n-i-1,n-i,\ldots,n-1\}.\\
	\end{align*}
\end{theorem}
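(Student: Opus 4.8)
The plan is to translate the existence of a skeleton-preserving projection into a transversality problem for the normal fan of $\Hnk$, and then to prove the two inequalities $d(n,k,i)\le\star$ and $d(n,k,i)\ge\star$ separately. Recall that $\Hnk$ has dimension $n-1$, lying in the hyperplane $\set{x\in\RR^n}{\sum_j x_j=k}$, and that each nonempty face is described by a disjoint pair $(A,B)$ of subsets of $[n]$ (the coordinates forced to $0$ and to $1$), with $|B|\le k$ and $|A|\le n-k$; the face is itself a hypersimplex of dimension $n-1-|A|-|B|$, and its normal cone is
\[
 N_{A,B}=\operatorname{cone}\bigl(\{-\bfe_a\}_{a\in A}\cup\{\bfe_b\}_{b\in B}\bigr)+\RR\uno,
\]
read in $\RR^n/\langle\uno\rangle$. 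By Ziegler's projection lemma, an affine projection $\pi$ to $\RR^d$ strictly preserves a face $F$ if and only if the $d$-dimensional subspace $W=\operatorname{im}(\pi^{\top})\subseteq\RR^n/\langle\uno\rangle$ meets $\relint N_{A,B}$. A short computation identifies $\relint N_{A,B}$ with the vectors $c$ whose free coordinates $c|_{[n]\ssm(A\cup B)}$ are all equal to some value $t$ and which satisfy $c_a<t$ for $a\in A$ and $c_b>t$ for $b\in B$. Thus $d(n,k,i)$ is exactly the least dimension of a subspace $W$ that contains such a $c$ for \emph{every} admissible pair with $|A|+|B|\ge n-1-i$ (the faces of dimension at most $i$); this is the form to which I would reduce everything.

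For the upper bound I would first use the symmetry $\Hnk\cong\Hs{n}{n-k}$ induced by $x\mapsto\uno-x$, which swaps the roles of $A$ and $B$, to assume $k\le n/2$; this accounts for the $k\leftrightarrow n-k$ symmetry of the formula. I would then realize $W$ on the moment curve: fix distinct reals $t_1<\dots<t_n$ and let $W$ be the space of evaluation vectors $(p(t_1),\dots,p(t_n))$ of polynomials $p$ of degree at most $d$, taken modulo constants. For a fixed face $(A,B)$ the required vector becomes a polynomial that is \emph{constant} on the $i+1$ free points while separating the $A$-points (below) from the $B$-points (above). Writing $p-t=q\cdot\prod_{j\notin A\cup B}(x-t_j)$ and bounding the degree of the corrective factor $q$ by an interlacing/sign-change count — in which each free coordinate forces a simple root (a sign change) or a double root, while $|B|\le k$ caps the number of $B$-blocks and hence the $\le 2\min(k,n-k)$ separating sign changes — I expect to obtain exactly $2k+2i$, $2(n-k)+2i$, or the truncated values $n-1,n-2$ in the respective regimes. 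The binding constraint comes from the single face whose sign pattern maximizes the required degree, and evaluating that worst case should reproduce the theorem's values.

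For the lower bound I would follow Sanyal's topological method. Assuming a subspace $W$ of dimension $d-1$ meets $\relint N_{A,B}$ for every face of dimension at most $i$, I would dualize: passing to the polar $\Hnk^{\triangle}$ and using Gale duality, the transversality data assemble into a $\mathbb{Z}_2$-equivariant map from a sphere built out of the $i$-skeleton of the normal fan — a deleted join of the relevant system of cones — into $S^{d-2}$, with the $\mathbb{Z}_2$-action the antipodal one induced by the face involution. A Borsuk--Ulam argument then forces $d-1$ to be at least the $\mathbb{Z}_2$-index (equivalently, one more than the connectivity) of that complex. This is the van Kampen--Flores content announced in the title, and the crux of the whole proof is the index computation: I expect the connectivity of the hypersimplicial $i$-skeleton join to be governed by $2\min(k,n-k)+2i$, truncated at the ambient value $n-1$, which simultaneously yields the first two cases and the borderline behaviour.

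The regime analysis is then delicate only at the boundary. When $n$ is large the intrinsic index bound $2\min(k,n-k)+2i$ lies below $n-1$ and governs $d$; when $2k-2i-1\le n\le 2k+2i+1$ it meets or exceeds $n-1$, so a projection can lower the dimension by at most one, and the dichotomy records whether even that single reduction is possible. I anticipate that the reduction to $n-2$ succeeds exactly when $\min(k,n-k)\ge i+2$, that is, when $k\notin A_{n,i}$, while for $k\in A_{n,i}$ (equivalently $\min(k,n-k)\le i+1$) the hypersimplex is too neighborly to reduce at all and $d=n-1$. Pinning down this borderline — separating $n-1$ from $n-2$ by an \emph{exact}, rather than asymptotic, index or neighborliness computation — is the main technical obstacle, and I would treat it by analyzing directly the extremal faces with $\min(k,n-k)$ near $i+1$.
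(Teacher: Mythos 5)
Your opening reduction is essentially correct and matches the paper's setup: the transversality condition ``$W$ meets $\relint N_{A,B}$'' is the dual form of Ziegler's Projection Lemma that the paper also uses (one caveat: that condition characterizes \emph{preserved} faces, and strict preservation additionally requires $\ker\pi$ to be transverse to the face; this turns out to be harmless because once \emph{all} faces of dimension at most $i$ are preserved they are automatically strictly preserved, but that needs an argument you do not give). The first genuine gap is in your upper bound. A moment-curve subspace can only ever produce point configurations $\Ppi$ in convex position --- in the codimension-one case, $\floor{(n-2)/2}$-neighborly ones --- and Proposition~\ref{prop:charcod1} shows that such a configuration yields an $i$-preserving projection onto $\RR^{n-2}$ only if it is $(k+i)$-neighborly, which is impossible in the range $2k\leq n\leq 2k+2i+1$ since only the simplex exceeds $\floor{d/2}$-neighborliness. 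So no interlacing/sign-change count on polynomials can ``reproduce'' the value $n-2$: the yellow cases require a configuration that is \emph{not} $(k-i-1)$-almost neighborly --- the paper uses the vertices of an $(n-2)$-simplex together with its barycenter (Lemma~\ref{lem:almostneighborlyworks}) --- and this surprising dichotomy, neighborly versus not-almost-neighborly (Theorem~\ref{thm:fullcharacterization}), is precisely the idea your plan is missing.

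Second, your lower bound is a programme rather than a proof: the $\mathbb{Z}_2$-index computation you defer is the entire content, and it is exactly at the boundary (separating $n-1$ from $n-2$, i.e.\ detecting the set $A_{n,i}$) that a connectivity or Borsuk--Ulam bound is least likely to be sharp. The paper cites Sanyal's framework but its actual argument is elementary and combinatorial, not topological: by Lemma~\ref{lem:Gale} the projected facet normals form a Gale transform $M$ of $\Ppi$, and a pivoting-hyperplane (``rotating line'') argument (Proposition~\ref{prop:charcod2}, Corollary~\ref{cor:characterization}) shows that in codimension at least two the non-almost-neighborly alternative can never occur; hence $\Ppi$ must be $(k+i)$-neighborly, giving $d\geq 2k+2i$ by the classical neighborliness bound, while the exceptional values $n-1$ versus $n-2$ follow from the exact codimension-one characterization (Proposition~\ref{prop:charcod1}). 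Without a substitute for that rotating-line step --- an exact, not asymptotic, statement --- your approach cannot close the cases $2k-2i-1\leq n\leq 2k+2i+1$, which is exactly where you yourself locate the main obstacle.
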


For $k=1$, we get $d(n,1,i)=2i+2$ for $n\geq 2i+4$ and $d(n,1,i)=n-1$ otherwise, which is a reformulation of the classical corollary of Radon's theorem that states that the simplex is the only $d$-dimensional polytope (from now on abbreviated as \defn{$d$-polytope}) that is more than $\floor{\frac{d}{2}}$-neighborly~\cite[Thm.~7.1.4]{Grunbaum}; or equivalently, that no linear projection of a $(2i+2)$-simplex onto $\RR^{2i+1}$ preserves its $i$-skeleton. This result is sometimes referred to as the \defn{linear van Kampen-Flores Theorem} (e.g.\ in \cite[p.~95]{RoerigSanyal2012}); thus Theorem~\ref{thm:skeleton} could be called the \defn{hypersimplicial linear van Kampen-Flores Theorem}.

Prodsimplicial linear van Kampen-Flores Theorems for products of simplices were proved by Matschke, Pfeifle and Pilaud~\cite{Matschke2011} and R\"{o}rig and Sanyal~\cite{RoerigSanyal2012}. One of their motivations was the study of dimensional ambiguity.
Gr\"unbaum defined a polytope~$P$ to be \defn{dimensionally $i$-ambiguous} if its $i$-skeleton is isomorphic to that of a polytope~$Q$ of a different dimension~\cite[Ch.~12]{Grunbaum}. 
Few polytopes are known to be dimensionally ambiguous. Examples include: simplices via \defn{neighborly polytopes}, cubes via \defn{neighborly cubical polytopes}~\cite{JoswigZiegler2000}, products of polygons via \defn{projected products of polygons}~\cite{SanyalZiegler2010,Ziegler2004}, and the aforementioned products of simplices via \defn{prodsimplicial-neighborly polytopes}~\cite{Matschke2011}. Our results show that the $(n,k)$-hypersimplex is dimensionally $i$-ambiguous except, maybe, when $k\in A_{n,i}$ and $2k-2i-1\leq n\leq 2k+2i+1$, which happens only for small values of~$n$ and~$k$.

For convenience, we present the case $i=2$ in Table~\ref{tab:i2values}, and we invite the reader to compare it with Table~\ref{tab:values}. Note once more the vertical symmetry in each column. It directly follows from Theorem \ref{thm:skeleton}, and we emphasize it in Corollary \ref{cor:symmetry}.

\begin{table}[htpb]
	\centering

	\begin{tabular}{|c|cccccccccccccccc|}
		\hline
	$k\setminus n$
	&                 3  &                 4  &                 5  &                 6  &                 7  &                 8  &                 9  &                 10 &                  11 &                  12 &                  13 &                  14 &                  15 &                  16 &                  17 &                  18 \\
		\hline
		1 &  \cellcolor{green}2 &  \cellcolor{green}3 &  \cellcolor{green}4 &  \cellcolor{green}5 &  \cellcolor{green}6 &  \cellcolor{pink}6 &  \cellcolor{pink}6 &  \cellcolor{pink}6 &   \cellcolor{pink}6 &   \cellcolor{pink}6 &   \cellcolor{pink}6 &   \cellcolor{pink}6 &   \cellcolor{pink}6 &   \cellcolor{pink}6 &   \cellcolor{pink}6 &   \cellcolor{pink}6 \\
		2 &  \cellcolor{green}2 &  \cellcolor{green}3 &  \cellcolor{green}4 &  \cellcolor{green}5 &  \cellcolor{green}6 &  \cellcolor{green}7 &  \cellcolor{green}8 &  \cellcolor{pink}8 &   \cellcolor{pink}8 &   \cellcolor{pink}8 &   \cellcolor{pink}8 &   \cellcolor{pink}8 &   \cellcolor{pink}8 &   \cellcolor{pink}8 &   \cellcolor{pink}8 &   \cellcolor{pink}8 \\
		3 &                    &  \cellcolor{green}3 &  \cellcolor{green}4 &  \cellcolor{green}5 &  \cellcolor{green}6 &  \cellcolor{green}7 &  \cellcolor{green}8 &  \cellcolor{green}9 &  \cellcolor{green}10 &  \cellcolor{pink}10 &  \cellcolor{pink}10 &  \cellcolor{pink}10 &  \cellcolor{pink}10 &  \cellcolor{pink}10 &  \cellcolor{pink}10 &  \cellcolor{pink}10 \\
		4 &                    &                    &  \cellcolor{green}4 &  \cellcolor{green}5 &  \cellcolor{green}6 &\cellcolor{yellow}6 &\cellcolor{yellow}7 &\cellcolor{yellow}8 &\cellcolor{yellow} 9 &\cellcolor{yellow}10 &\cellcolor{yellow}11 &  \cellcolor{pink}12 &  \cellcolor{pink}12 &  \cellcolor{pink}12 &  \cellcolor{pink}12 &  \cellcolor{pink}12 \\
		5 &                    &                    &                    &  \cellcolor{green}5 &  \cellcolor{green}6 &  \cellcolor{green}7 &\cellcolor{yellow}7 &\cellcolor{yellow}8 &\cellcolor{yellow} 9 &\cellcolor{yellow}10 &\cellcolor{yellow}11 &\cellcolor{yellow}12 &\cellcolor{yellow}13 &  \cellcolor{pink}14 &  \cellcolor{pink}14 &  \cellcolor{pink}14 \\
		6 &                    &                    &                    &                    &  \cellcolor{green}6 &  \cellcolor{green}7 &  \cellcolor{green}8 &\cellcolor{yellow}8 &\cellcolor{yellow} 9 &\cellcolor{yellow}10 &\cellcolor{yellow}11 &\cellcolor{yellow}12 &\cellcolor{yellow}13 &\cellcolor{yellow}14 &\cellcolor{yellow}15 &  \cellcolor{pink}16 \\
		7 &                    &                    &                    &                    &                    &  \cellcolor{pink}6 &  \cellcolor{green}8 &  \cellcolor{green}9 &\cellcolor{yellow} 9 &\cellcolor{yellow}10 &\cellcolor{yellow}11 &\cellcolor{yellow}12 &\cellcolor{yellow}13 &\cellcolor{yellow}14 &\cellcolor{yellow}15 &\cellcolor{yellow}16 \\
		8 &                    &                    &                    &                    &                    &                    &  \cellcolor{pink}6 &  \cellcolor{pink}8 &  \cellcolor{green}10 &\cellcolor{yellow}10 &\cellcolor{yellow}11 &\cellcolor{yellow}12 &\cellcolor{yellow}13 &\cellcolor{yellow}14 &\cellcolor{yellow}15 &\cellcolor{yellow}16 \\
		9 &                    &                    &                    &                    &                    &                    &                    &  \cellcolor{pink}6 &   \cellcolor{pink}8 &  \cellcolor{pink}10 &\cellcolor{yellow}11 &\cellcolor{yellow}12 &\cellcolor{yellow}13 &\cellcolor{yellow}14 &\cellcolor{yellow}15 &\cellcolor{yellow}16 \\
		\hline
		\end{tabular}
		  \bigskip
	\caption{Values of $d(n,k,2)$ for small values of $n$ and $k$. Green values correspond to exceptional cases where $k\in A_{n,i}$ and $2k-2i-1\leq n \leq 2k+2i+1$. Yellow values correspond to the cases $2k-2i-1\leq n \leq 2k+2i+1$ with $k\notin A_{n,i}$. Red values correspond to the cases $n\geq 2k+2i+2$ or $n\leq 2k-2i-2$.}\label{tab:i2values}
\end{table}

Note that, in contrast with the $k=1,n-1$ case, Theorem~\ref{thm:skeleton} implies that there are three distinct behaviors for~$d(n,k,i)$ depending on the relative values of $n$, $k$ and $i$.
The first possibility is that there is no dimension-reducing projection of the hypersimplex that preserves all $i$-faces. This happens for some exceptional values captured by the restrictions $k\in A_{n,i}$ and $2k-2i-1\leq n\leq 2k+2i+1.$ These are the green values in the table. If $n$ is very large, or very small compared to $k$ and~$i$, then we can project to a space of fixed dimension, and achieve arbitrarily large codimension. These cases are shown in red in the table. Finally, there is an extra case (depicted in yellow) in which we can get a codimension~$1$ projection but no codimension~$2$ projection exists. It starts happening when $k\geq i+1$ and $n$ is of moderate size.

Actually, Theorem~\ref{thm:skeleton} is a corollary of our main result, Theorem~\ref{thm:fullcharacterization}, which provides the full characterization of the projections that attain these bounds. That is, we fully characterize which $n$-point configurations~$S$ verify that the $k$-set polytope $P_k(S)$ shares the $i$-skeleton with~$\Hnk$. Our characterization shows a surprising dichotomy: either $S$ is neighborly enough, or it has few vertices and it is not too %
neighborly, see Section~\ref{sec:hypersimplicial-neighborly} for details. 

In Section \ref{sec:strongce} we exploit this characterization to solve a variant of the convex embedding problem, also posed in the work of Halman, Onn and Rothblum~\cite{OnnRothblum2004}, for which we require the images of the vertices of the hypergraph to be in convex position as well.

Finally, in Section~\ref{sec:extremal} we study the associated extremal problem of maximizing the number of $k$-barycenters in convex position, which has been largely studied for graphs~\cite{Bilka2010,EisenbrandPachRothvossSopher2008,GarciaMarcoKnauer2016,HalmanOnnRothblum2007,SwanepoelValtr2010}. Let \defn{$g_k(n,d)$} be the maximum number of hyperedges that a $k$-uniform hypergraph on $n$ vertices with a convex embedding into~$\RR^d$ can have.

The function $g_k(n,d)$ exhibits three different regimes according to the value of~$d$. Our knowledge of the (asymptotic) growth of $g_k(n,d)$ for fixed $d$ and $k$ has different levels of precision for the three cases:
\bigskip

\begin{theorem}\label{thm:extremal}
For fixed values of $k$ and $d$, the value of $g_k(n,d)$ behaves as follows:\begin{itemize}
 \item If $d\geq 2k$, then $$g_k(n,d)=\binom{n}{k}.$$
 \item If $k+1\leq d\leq 2k-1$, then $g_k(n,d)$ is in $\Theta(n^k)$. More precisely, $$g_k(n,d)=\gamma_{k,d}\cdot n^k+o(n^k)$$ for some constant $\gamma_{k,d}$ satisfying 
 
 \[\binom{d}{k}\frac{1}{d^k}\leq \gamma_{k,d} \leq \frac{1}{k!}\left(1 - \frac{1}{\binom{n_{d,k}}{k-1}}\right)\]
 where 
 \[	n_{k,d}=\begin{cases}
		d+2 &\text{ if $d\geq 2k-3$, }\\
		\floor{\frac{d}{2}}+k &\text{ if $1\leq d\leq 2k-4$}.
	\end{cases}
	\]
 for $d\neq 1$, 
 and $n_{1,1}=2$ and $n_{k,1}=k$ for $k\geq 2$.
 \item If $d\leq k$, then $g_k(n,d)$ is
 in $\Omega(n^{d-1})$.\footnote{We originally claimed an $O(n^d)$ upper bound for this case in~\cite{jctb2021}, but our proof contained a mistake, see Remark~\ref{rmk:upperboundk>=k}.}
\end{itemize}

The limits in the $o$, 
$\Omega$ and $\Theta$ notations are taken as $n\to \infty$.
\end{theorem}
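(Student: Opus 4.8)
The plan is to analyze the three regimes of $g_k(n,d)$ separately, since each demands a different technique and the statement itself already partitions the problem by the relationship between $d$ and $2k$.

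For the first regime $d \geq 2k$, the claim $g_k(n,d) = \binom{n}{k}$ is immediate from Theorem~\ref{thm:skeleton} applied with $i=0$. Indeed, $d(n,k,0)$ is the smallest dimension into which the hypersimplex $\Hnk$ can be projected while preserving its vertices ($0$-skeleton), and convex embeddings of $\Knk$ correspond exactly to such vertex-preserving projections. Theorem~\ref{thm:skeleton} gives $d(n,k,0) = 2k$ whenever $n \geq 2k+2$ (and smaller values otherwise), so for any $d \geq 2k$ the complete hypergraph $\Knk$ embeds, realizing all $\binom{n}{k}$ hyperedges. The upper bound $g_k(n,d) \leq \binom{n}{k}$ is trivial since a $k$-uniform hypergraph on $n$ vertices has at most $\binom{n}{k}$ hyperedges. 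First I would dispatch this case in a sentence.

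For the middle regime $k+1 \leq d \leq 2k-1$, the goal is the sharp order $\Theta(n^k)$ with explicit constant bounds. The lower bound $\gamma_{k,d} \geq \binom{d}{k} d^{-k}$ should come from an explicit construction: place the $n$ vertices as points on a suitable low-dimensional curve or grid so that a positive fraction of $k$-subsets have their barycenters in convex position; the constant $\binom{d}{k}/d^k$ suggests grouping vertices into $d$ clusters and counting the $k$-subsets that pick $k$ distinct clusters, whose barycenters can be forced convex. For the upper bound, the plan is to invoke a Turán-type or Kruskal–Katona–style hypergraph argument: the quantity $n_{k,d}$ is precisely the threshold from Theorem~\ref{thm:main}/Theorem~\ref{thm:skeleton} above which $\Knk$ no longer embeds in $\RR^d$, so any convexly embeddable hypergraph must be locally sparse — it cannot contain a complete $\Kuh{m}{k}$ on $m = n_{k,d}$ vertices. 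A supersaturation / averaging argument over all $m$-subsets of vertices then caps the hyperedge density, producing the factor $\frac{1}{k!}(1 - \binom{n_{k,d}}{k-1}^{-1})$. The main obstacle here is getting the constants to match the stated bounds rather than just the order $\Theta(n^k)$; the averaging must be set up so that the forbidden complete subhypergraph translates into exactly this density deficit.

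For the third regime $d \leq k$, we only claim $g_k(n,d) \in \cO(n^d) \cap \Omega(n^{d-1})$. The upper bound $\cO(n^d)$ should follow from a projection/general-position argument bounding the number of vertices of the $k$-set polytope $P_k(S)$ in $\RR^d$, or from the observation that convex position in $\RR^d$ forces the barycenters to lie on the boundary of a $d$-polytope, whose face count is controlled; one can bound the number of convexly independent barycenters by a polynomial of degree $d$ in the number of distinct coordinate values. The lower bound $\Omega(n^{d-1})$ would again be an explicit construction, likely placing points so that the $k$-barycenters trace out the vertices of a cyclic-type polytope in $\RR^d$. I expect the hardest part of the whole theorem to be the tight constant in the middle regime, since the two outer regimes ask only for crude asymptotics whereas the middle regime demands that the extremal-graph-theoretic bound be pushed to the precise leading coefficient $\gamma_{k,d}$ sandwiched between explicit expressions.
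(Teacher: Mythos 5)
Your first regime is fine and matches the paper (it is immediate from Theorem~\ref{thm:main}). Your middle regime also has the same skeleton as the paper's proof --- the threshold $n_{k,d}$ from Theorem~\ref{thm:main}, a Tur\'an-type upper bound, and a complete $d$-partite construction for the lower bound --- but two of your steps would not deliver the stated constants. First, an off-by-one: a hypergraph with a convex embedding into $\RR^d$ \emph{can} contain $\Kuh{n_{k,d}}{k}$ (that is the definition of $n_{k,d}$); what is forbidden is a clique on $n_{k,d}+1$ vertices, and the constant $1-1/\binom{n_{k,d}}{k-1}$ comes from forbidding $\Kuh{\ell}{k}$ with $\ell=n_{k,d}+1$. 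Second, the plain averaging argument you invoke (every $\ell$-subset of vertices misses at least one $k$-edge) only yields edge density at most $1-1/\binom{\ell}{k}$, which is strictly weaker than the stated coefficient; the paper gets $1-1/\binom{\ell-1}{k-1}$ by citing de~Caen's theorem, $EX(n,k,\ell)\leq\bigl(1-\frac{n-\ell+1}{n-k+1}\cdot\frac{1}{\binom{\ell-1}{k-1}}\bigr)\binom{n}{k}$, and this is not recoverable by naive supersaturation. Likewise, your phrase ``whose barycenters can be forced convex'' is precisely the content that has to be proved: the paper does it by an explicit lifting, placing the $i$-th group on $\set{a\cdot\bfe_i+a^2\cdot\bfe_d}{a\in A}$ and exhibiting a supporting hyperplane at every $k$-barycenter; without that construction there is no lower bound.

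The genuine failure is your third regime. Bounding the number of vertices of the $k$-set polytope $P_k(S)$ bounds the wrong quantity: a convex embedding only requires the barycenters of the hyperedges of $E$ to be in convex position, i.e.\ to be vertices of $\conv\set{\frac{1}{k}\sum_{i\in e}s_i}{e\in E}$, and such points need not be vertices of $P_k(S)$, the hull over \emph{all} $\binom{n}{k}$ barycenters. This is exactly the distinction the paper emphasizes: $a_k(n,d)\leq g_k(n,d)$ with a huge gap ($a_2(n,2)\in \cO(n)$ while $g_2(n,2)\in\Theta(n^{4/3})$), so no $k$-set bound can, by itself, bound $g_k$. Your fallback (``the face count of a $d$-polytope is controlled'') is also not an argument, since a $d$-polytope can have arbitrarily many vertices. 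The paper's actual proof associates to each hyperedge $e$ a closed halfspace $H^+$ such that $e$ is the only hyperedge contained in $H^+\cap S$ (using a functional exposing its barycenter among the hyperedge barycenters), and then bounds the number of distinct subsets of $S$ cut out by halfspaces by $2\bigl(\binom{n-1}{d}+\cdots+\binom{n-1}{0}\bigr)\in \cO(n^d)$. For the $\Omega(n^{d-1})$ lower bound, a ``cyclic-type'' construction is not known to work; the paper imports Weibel's theorem producing $k$ polytopes in $\RR^d$ with $n$ vertices each whose Minkowski sum has $\Theta(n^{d-1})$ vertices, and converts it into a subhypergraph of $\Kuh{n,\ldots,n}{k}$ with a convex embedding. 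Both halves of this regime require ideas your proposal does not contain.
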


To put this result into perspective, we compare it with the known results for the case of graphs ($k=2$):
\begin{itemize}
 \item For $d\geq 4$, every graph has a convex embedding into $\RR^d$, and $g_2(n,d)=\binom{n}{2}$.
  \item For $d=3$, it is shown in~\cite{SwanepoelValtr2010} that $\gamma_{2,3}\in\{\frac{1}{3},\frac{3}{8}\}$, and $\gamma_{2,3}=\frac{1}{3}$ is conjectured, which would be the case if and only if for some $m$ there is no convex embedding of $K_{m,m,m,m}$ into $\RR^3$. Recently, Raggi and Rold\'an-Pensado found a convex embedding of $K_{2,2,2,2}$ into $\RR^3$ using computational methods (personal communication). Note that for these parameters, our results recover exactly these same lower and upper bounds: $\frac{1}{3}\leq \gamma_{2,3}\leq \frac{3}{8}$.
 \item For $d=2$, Halman, Onn and Rothblum asked whether $g_2(n,2)$ was linear or quadratic~\cite{HalmanOnnRothblum2007}. The answer is that $g_2(n,2)\in \Theta(n^{4/3})$, obtained as a result of the combined effort of diverse research teams. The tight upper bound was obtained in~\cite{EisenbrandPachRothvossSopher2008}, using
a generalization of the Szemer\'edi-Trotter Theorem for points and ``well-behaved'' curves in the plane~\cite{PachSharir1998}, and a matching lower bound was given in~\cite{Bilka2010} using configurations with the extremal number of point-line incidences.
\end{itemize}

In general, the \defn{$k$-set problem} asks for the maximal possible number of vertices of~$P_k(S)$ in fixed dimension $d$.
As Sharir, Smorodinsky, and Tardos put it, this is ``one of the most intriguing open problems in combinatorial geometry''~\cite{SST2001}, and there is a considerable gap between the upper and lower bounds (see~\cite{Wagner2008} for an extensive survey on the subject, and \cite{Sharir2011} for the latest improvement). 

Despite their close relation, the $k$-set problem and the estimation of $g_k(n,d)$ are fundamentally
distinct problems.
Indeed, note that for a general $k$-uniform hypergraph $H=(V,E)$, convex embeddings are more permissive than asking for the the $k$-barycenters induced by $E$ to be vertices of the $k$-set polytope. Indeed, the subsets of $k$-barycenters given by~$E$ can be in convex position even if they are not vertices of the whole set of $k$-barycenters. 

The $k$-set problem concerns the estimation of 
\defn{$a_k(n,d)$}, 
the maximal possible number of vertices of $P_k(S)$ for an $n$-point-set~$S$ in $\RR^d$, whereas we are studying the largest possible subset of $k$-barycenters that are in convex position. 
We trivially have \[a_k(n,d)\leq g_k(n,d),\]
but the converse is far from being true. For example, $a_2(n,2)=O(n)$ (see~\cite[Ch.~11]{MatousekLODG}), whereas $g_2(n,2)\in\Theta(n^{4/3})$~\cite{Bilka2010,EisenbrandPachRothvossSopher2008}.

In Section \ref{sec:open} we provide an additional discussion of our results and we collect various open problems for further work.

\section{Projections that strictly preserve the vertices of the hypersimplex}
\label{sec:projections}

In this section we reformulate Theorem~\ref{thm:main} in terms of polytope projections that preserve vertices. By projections we mean affine maps, although it suffices to focus on surjective linear maps. We assume some familiarity with the basic notions on polytope theory and refer the reader to~\cite{ZieglerBOOK} for a detailed treatment of the subject. Our main concern is the study of faces strictly preserved%
\footnote{We are mainly interested in strictly preserved faces, and our definition coincides with that in~\cite{RoerigSanyal2012,Sanyal2009,Ziegler2004}. However, our definition of (not necessarily strictly) preserved face differs from that in~\cite{RoerigSanyal2012,Sanyal2009}, where they require conditions \ref{it:piFface} and~\ref{it:samedim} to define preserved faces. We prefer this definition because it provides a bijection between faces of $\pi(P)$ and preserved faces, and simplifies the notation for Section~\ref{sec:ksets}.
}
under a projection, a notion introduced in Definition~3.1 of~\cite{Ziegler2004}. Note that if $\pi$ is an affine map and $P$ is a convex polytope, then the preimage of every face of $\pi(P)$ is a face of~$P$. These are called the preserved faces of $P$. And if a preserved face is affinely isomorphic to its image, then it is called strictly preserved.

\begin{definition}%
Let~$P$ be a polytope and $\pi:P\to\pi(P)$ a projection. A face $F\subseteq P$ is
\defn{preserved} under~$\pi$ if 
\begin{enumerate}[(i)]
 \item\label{it:piFface} $\pi(F)$ is a face of $\pi(P)$, and 
 \item\label{it:pi-1F} $\pi^{-1}(\pi(F))=F$. 
\end{enumerate}
If moreover 
\begin{enumerate}[(i),resume]
 \item\label{it:samedim} the map $F\to \pi(F)$ is a bijection,
\end{enumerate}
then we say that $F$ is \defn{strictly preserved} under~$\pi$. (This condition can be replaced by the equivalent conditions (iii') $\dim F=\dim \pi(F)$, or (iii'') $\pi(F)$ is combinatorially isomorphic to~$F$.) 
\end{definition}

For the restatement of Theorem~\ref{thm:main} we use the following auxiliary lemma that relates convex embeddings to projections of hypergraph polytopes. Let $H=(V,E)$ be a hypergraph. In analogy to edge polytopes~\cite{OhsugiHibi1998,Villarreal1998}, we define the \defn{hyperedge polytope $\hep{H}$} as the convex hull of the incidence vectors of the hyperedges of $H$:
\[\hep{H}:=\conv\Bset{\sum_{v\in e}\bfe_v}{e\in E}\subset\RR^{V},\]
where $\bfe_v\in \RR^{V}$ represent the standard basis vectors indexed by the vertices of~$H$. Note that $\hep{H}$ is a $0/1$-polytope.

\begin{lemma}
	\label{lem:equivalence}
	The existence of a convex embedding of a $k$-uniform hypergraph $H=(V,E)$ into $\RR^d$ is equivalent to the existence of an affine map of the hyperedge polytope $\hep{H}$ to $\RR^{d}$ that strictly preserves all its vertices. 
\end{lemma}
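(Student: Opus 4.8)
The plan is to establish a direct correspondence between a convex embedding $f\colon [n]\to\RR^d$ of $\Knk$ and an affine map $\pi\colon\Hnk\to\RR^d$ that strictly preserves all vertices, by reading off $\pi$ from $f$ and vice versa. The key observation is that the vertices of $\Hnk$ are the incidence vectors $\mathbf{e}_e:=\sum_{v\in e}\bfe_v$ for $e\in\binom{[n]}{k}$, and that an affine map is determined by where it sends the standard basis vectors $\bfe_1,\dots,\bfe_n$. So given $f$, I would define an affine map $A\colon\RR^n\to\RR^d$ (on the affine hull of $\Hnk$) by $A(\bfe_v)=f(v)$ for each $v\in[n]$ and extend affinely. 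Then $A(\mathbf{e}_e)=\sum_{v\in e}f(v)=k\cdot\frac1k\sum_{v\in e}f(v)$, so up to the harmless global scaling by $k$ the image of the vertex $\mathbf{e}_e$ is exactly the $k$-barycenter of $e$. Conversely, any affine map $\pi\colon\Hnk\to\RR^d$ extends to an affine map of the ambient space whose values $\pi(\bfe_v)$ on the basis define a map $f\colon v\mapsto\pi(\bfe_v)$ of $[n]$ into $\RR^d$, and its vertex images are again the $k$-barycenters of $f$ up to the factor $k$. Scaling all points by a nonzero constant affects neither convex position nor strict preservation of vertices, so this normalization costs nothing.

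With the dictionary in place, I would verify the two directions of the equivalence. For the forward direction, suppose $f$ is a convex embedding, so the $\binom{n}{k}$ barycenters are in convex position and distinct (injectivity of $f$ combined with convex position forces distinctness of the barycenters, which I should check). Convex position of a finite point set means precisely that every point is a vertex of the convex hull. Since $\pi(\Hnk)=\conv\set{\pi(\mathbf{e}_e)}{e\in\binom{[n]}{k}}$, each image vertex $\pi(\mathbf{e}_e)$ is a vertex of $\pi(\Hnk)$; this gives condition \ref{it:piFface}. The preimage condition \ref{it:pi-1F}, $\pi^{-1}(\pi(\mathbf{e}_e))\cap\Hnk=\{\mathbf{e}_e\}$, follows because the barycenters are distinct: no other vertex maps to the same point, and the fiber over a vertex of $\pi(\Hnk)$ meets $\Hnk$ only in vertices (a standard fact, since if an interior or face point of $\Hnk$ mapped to a vertex of the image, that vertex would be a proper convex combination of image points, contradicting extremality). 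Condition \ref{it:samedim} is automatic because a vertex is combinatorially isomorphic to a point. For the converse, a strictly-preserved-vertex projection sends the vertices of $\Hnk$ to distinct vertices of $\pi(\Hnk)$, which is exactly convex position of the $k$-barycenters; distinctness of the images forces $f$ to be injective.

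The main obstacle, and the point requiring the most care, is the handling of condition \ref{it:pi-1F} and the distinctness of barycenters, since convex position as defined in the introduction (``no point is a convex combination of the others'') must be reconciled with the polytope-theoretic notion of a strictly preserved vertex. I need to argue cleanly that a point in convex position is a genuine vertex (not merely an extreme point of a degenerate multiset) and that the fiber of $\pi$ over such a vertex, intersected with the compact polytope $\Hnk$, collapses to the single preimage vertex. The cleanest route is to note that a point $p$ of a finite set $T$ is in convex position if and only if there is a linear functional strictly maximized at $p$ over $T$; pulling this functional back through $\pi$ yields a functional on $\Hnk$ that is maximized exactly on the fiber face over $p$, and since the maximizers among vertices are unique, the preserved face is a single vertex. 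I would also flag the boundary/degenerate parameter cases ($k=1$, $k=n-1$, or small $n$) only to confirm the statement's hypothesis $2\le k\le n-2$ is not actually needed for this equivalence, which holds for all $1\le k\le n-1$; the restriction belongs to Theorem~\ref{thm:main}, not to this lemma.
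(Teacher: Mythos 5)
Your proposal is correct and takes essentially the same route as the paper: both associate to $f$ the linear map sending $\bfe_i$ to (a scaling of) $f(v_i)$, observe that the vertices of $\Hnk$ then map exactly to the $k$-barycenters, and identify convex position of these barycenters with strict preservation of the vertices. The paper's proof is a three-line sketch that asserts this final equivalence outright, so your verification of the three conditions for strict preservation, the fiber-over-a-vertex argument, and the distinctness/injectivity points (including the sub-claim you flagged, which is indeed true) merely fills in details the paper leaves implicit.
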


\begin{proof}
Let $V=\{v_1,\ldots,v_{n}\}$ be the vertex set of $H$. To any map $f:V\to \RR^d$ we associate the linear map $\pi:\RR^{V}\to \RR^{d}$ given by $\pi(\bfe_v)=\frac{1}{k}f(v)$. Notice that $\pi$ maps the vertices of $\hep{H}$ to the barycenters of the corresponding $k$-subsets of~$f(V)$. 
Now, these are in convex position if and only if $\pi(\hep{H})$ has $|E|$ distinct vertices (note that the convex position forces them to be distinct). Since $\hep{H}$ has $|E|$ vertices, this implies that the preimage of each vertex of $\pi(\hep{H})$ must be a single vertex of $\hep{H}$, and hence that all the vertices of $\hep{H}$ are strictly preserved by~$\pi$. The reverse argument associates affine maps strictly preserving the vertices of $\hep{H}$ to maps $f:V\to \RR^d$ with the $k$-barycenters of~$H$ in convex position. Since this is an open condition, if needed we can perturb~$f$ to force its injectivity and recover a convex embedding of~$H$.
\end{proof}

The hyperedge polytope of the complete $k$-uniform hypergraph is known as the \defn{$(n,k)$-hypersimplex} and denoted~\defn{$\Hnk$}. Namely, it is the polytope:
\[\Hnk:=\hep{\Knk}=\conv\Bset{x\in\{0,1\}^n}{\sum_{1\leq j\leq n}x_j=k}.\]

\begin{cor}
	\label{cor:equivalence}
	The existence of a convex embedding of $\Knk$ into $\RR^d$ is equivalent to the existence of an affine map of the hypersimplex $\Hnk$ to $\RR^{d}$ that strictly preserves its $\binom{n}{k}$ vertices. 
\end{cor}

Said differently, the projection $\pi:\RR^n\to\RR^d$ strictly preserves the vertices of~$\Hnk$ if and only if the point configuration \defn{$\Ppi:=\set{\pi(\bfe_i)}{1\leq i\leq n}$} has all its $k$-barycenters in convex position.

Corollary~\ref{cor:equivalence} implies that Theorem~\ref{thm:main} is a corollary of Theorem~\ref{thm:skeleton} obtained by setting $i=0$. Thus, from now on we focus on proving Theorem~\ref{thm:skeleton}. 

For a $d$-polytope $P\subset\RR^d$ and a linear surjection $\pi:\RR^d\to\RR^e$, 
the Projection Lemma \cite[Prop.~3.2]{Ziegler2004} %
gives a criterion to characterize which faces of~$P$ are strictly preserved by~$\pi$
in terms of an \defn{associated projection} $\tau:\RR^d\to\RR^{d-e}$. More precisely, let $\iota:\ker(\pi)\cong \RR^{d-e}\hookrightarrow \RR^d$ be the inclusion map of $\ker(\pi)$. Then $\tau$ is the adjoint map $\iota ^\ast: (\RR^d)^\ast \to (\RR^{d-e})^\ast$ after the canonical identifications $(\RR^d)^\ast \cong \RR^d$ and $(\RR^{d-e})^\ast \cong \RR^{d-e}$ (see~\cite[Sec.~3.2]{Sanyal2009} for details).

\begin{lemma}[{{Projection Lemma~\cite[Prop.~3.2]{Ziegler2004}}}]\label{lem:projection_lemma}
	Let $P\subset\RR^d$ be a $d$-polytope, $\pi:\RR^d\to\RR^e$ a linear surjection, and $\tau:\RR^d\to\RR^{d-e}$ be the associated projection.
	
	Let $F\subset P$ be a face of $P$ and let $I$ be the set of  normal vectors to the facets of $P$ that contain $F$. Then $F$ is strictly preserved if and only
	if $\set{\tau(\bfn)}{\bfn\in I}$ positively span $\RR^{d-e}$; i.e.\ if $0\in \intr\conv\set{\tau(\bfn)}{\bfn\in I}$.
\end{lemma}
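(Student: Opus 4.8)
The plan is to prove the equivalence by translating ``strictly preserved'' into two statements about the normal cone $N_P(F)=\operatorname{cone}\set{\bfn_j}{j\in I}$, and then reading off these two statements from the two ``halves'' of the condition $0\in\intr\conv\set{\tau(\bfn_j)}{j\in I}$: that $0$ lies in the \emph{relative} interior of the hull, and that the hull is \emph{full-dimensional}. Throughout I would write $L:=\ker\pi$, so $\dim L=d-e$, and use the defining property of $\tau=\iota^\ast$ that $\ker\tau=(\operatorname{im}\iota)^\perp=L^\perp=\operatorname{im}\pi^\ast$. This identity is the bridge between the two worlds: a functional $c\in(\RR^d)^\ast\cong\RR^d$ factors through $\pi$, i.e.\ $c=\pi^\ast\bar c$ for some $\bar c\in(\RR^e)^\ast$, if and only if $\tau(c)=0$. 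I would also recall the standard facts that (since $I$ indexes exactly the facets through $F$) the normal cone $N_P(F)$ is generated by $\set{\bfn_j}{j\in I}$, that its relative interior is $\set{\sum_{j\in I}\lambda_j\bfn_j}{\lambda_j>0}$, and that $\operatorname{face}_P(c)=F$ holds precisely when $c\in\relint N_P(F)$.

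First I would show that conditions (i) and (ii) together are equivalent to $0\in\relint\conv\set{\tau(\bfn_j)}{j\in I}$. If $0=\sum_{j\in I}\lambda_j\tau(\bfn_j)$ with all $\lambda_j>0$, set $c:=\sum_{j\in I}\lambda_j\bfn_j$; then $c\in\relint N_P(F)$ and $\tau(c)=0$, so $c=\pi^\ast\bar c$, and from $\sprod{c}{x}=\sprod{\bar c}{\pi(x)}$ one gets $\operatorname{face}_{\pi(P)}(\bar c)=\pi(\operatorname{face}_P(c))=\pi(F)$ together with $\pi^{-1}(\pi(F))=\operatorname{face}_P(c)=F$, which is exactly (i) and (ii). Conversely, if $\pi(F)$ is a face of $\pi(P)$ with $\pi^{-1}(\pi(F))=F$, I would pick $\bar c$ with $\operatorname{face}_{\pi(P)}(\bar c)=\pi(F)$ and put $c:=\pi^\ast\bar c$; then $\operatorname{face}_P(c)=\pi^{-1}(\pi(F))=F$ forces $c\in\relint N_P(F)$, while $\tau(c)=0$ turns the strictly positive combination expressing $c$ into the desired witness that $0\in\relint\conv\set{\tau(\bfn_j)}{j\in I}$.

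Next I would account for condition (iii) as a full-dimensionality statement. A combinatorial isomorphism $\pi(F)\cong F$ in particular forces $\dim\pi(F)=\dim F$, equivalently $\pi$ injective on $\operatorname{aff}(F)$, equivalently $(\operatorname{aff}F-\operatorname{aff}F)\cap L=\{0\}$. Dualizing via $\operatorname{lin}N_P(F)=(\operatorname{aff}F-\operatorname{aff}F)^\perp$ and $\ker\tau=L^\perp$, this is equivalent to $\tau(\operatorname{lin}N_P(F))=L$, i.e.\ to $\set{\tau(\bfn_j)}{j\in I}$ spanning $\RR^{d-e}$. Conversely, once $\pi$ is injective on $\operatorname{aff}(F)$ and (i),(ii) already hold, $\pi|_F$ is an affine isomorphism onto the face $\pi(F)$ and hence a combinatorial isomorphism, giving (iii). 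Since $0\in\intr\conv S$ (interior in $\RR^{d-e}$) is precisely the conjunction of ``$0\in\relint\conv S$'' and ``$S$ spans $\RR^{d-e}$'', combining the two steps yields the equivalence of strict preservation with $0\in\intr\conv\set{\tau(\bfn_j)}{j\in I}$, which is the asserted positive-spanning condition.

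I expect the main obstacle to be the bookkeeping in the last step: cleanly decoupling the two pieces of the interior condition and matching ``$0$ in the relative interior'' to the face-preservation conditions (i),(ii) and ``full-dimensionality'' to the injectivity feeding condition (iii), without double-counting. Two points require care: that $I$ indexes exactly the facets containing $F$ (so that $N_P(F)$ is generated precisely by $\set{\bfn_j}{j\in I}$ and its relative interior really is the set of strictly positive combinations), and the adjoint identity $\ker\tau=\operatorname{im}\pi^\ast$, which is what lets us pass between ``$c$ factors through $\pi$'' and ``$\tau(c)=0$''.
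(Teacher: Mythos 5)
There is nothing to compare against inside the paper: the paper does not prove this lemma but imports it verbatim, with attribution, from Ziegler~\cite[Prop.~3.2]{Ziegler2004}. Judged on its own, your argument is correct and complete, and it is essentially the standard proof of the Projection Lemma (the route of the cited source, also followed in Sanyal's account~\cite[Sec.~3.2]{Sanyal2009}): the bridge identity $\ker\tau=\operatorname{im}\pi^{\ast}$; the equivalence of (i)--(ii) with $0\in\relint\conv\set{\tau(\bfn_j)}{j\in I}$ via $\operatorname{face}_P(c)=F\Leftrightarrow c\in\relint N_P(F)$; and the identification of (iii) with full-dimensionality of the span of $\set{\tau(\bfn_j)}{j\in I}$, using $\operatorname{lin}N_P(F)=(\operatorname{aff}F-\operatorname{aff}F)^{\perp}$, $\ker\tau=(\ker\pi)^{\perp}$ and $W^{\perp}+L^{\perp}=(W\cap L)^{\perp}$, are exactly the right ingredients, correctly assembled, and the final observation that ``interior'' decomposes as ``relative interior plus spanning'' closes the argument. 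One pedantic remark: your Step-1 claim ``(i) and (ii) $\Leftrightarrow$ $0\in\relint\conv\set{\tau(\bfn_j)}{j\in I}$'' fails in the degenerate case $F=P$, where $I=\emptyset$, so the right-hand side is vacuously false while (i) and (ii) hold trivially. This is harmless --- the lemma is only meaningful (and only applied) for proper nonempty faces, for which $I\neq\emptyset$ --- but you should state that assumption explicitly, since it is what guarantees that $\relint N_P(F)$ really is the set of strictly positive combinations with at least one term.
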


As final ingredients we need the dimension and hyperplane description of $\Hnk$, as well as its facial structure. These are well known (see for example~\cite[Ex.~0.11]{ZieglerBOOK}).

\begin{lemma}\label{lem:hypersimplex}
The hypersimplex $\Hnk$ is
the polytope  
\[
	\Hnk=\Big\{\sum_{j\in[n]}x_j=k\Big\}\cap \bigcap_{j\in[n]} \Big\{x_j\geq 0\Big\}\cap\bigcap_{j\in[n]} \Big\{x_j\leq 1\Big\}.
	\]
It has $\binom{n}{k}$ vertices, which are the points in $\{0,1\}^n$ whose coordinate sum is~$k$. 
It is a point for $k\in\{0,n\}$, an $(n-1)$-simplex for $k\in\{1,n-1\}$, and for $2\leq k\leq n-2$ it is $(n-1)$-dimensional and has $2n$ facets.

For $1\leq i\leq n-1$, its $i$-faces are of the form 
\[
	\Hnk^{I,J}:=\Hnk\cap\bigcap_{i\in I} \Big\{x_i = 1\Big\} \cap \bigcap_{j\in J} \Big\{x_j= 0\Big\},
	\]
where $I,J\subset[n]$ are disjoint index sets with $|I|\leq k-1$, $|J|\leq n-k-1$ and $|I|+|J|=n-i-1$. The $i$-face $\Hnk^{I,J}$ is isomorphic to $\Delta_{n-|I|-|J|, k-|I|}$.
\end{lemma}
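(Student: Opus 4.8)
The plan is to first pin down the hyperplane description and the vertex set, and then read off the dimension, the facet count, and the facial structure as consequences. Let $P$ denote the polytope on the right-hand side, i.e.\ the intersection of the hyperplane $\big\{\sum_{j\in[n]}x_j=k\big\}$ with the $2n$ halfspaces $\{x_j\geq 0\}$ and $\{x_j\leq 1\}$. Every $0/1$ vector with coordinate sum $k$ satisfies all these constraints, so $\Hnk\subseteq P$; the reverse inclusion is the only nontrivial point, and it amounts to the integrality of $P$. I would establish it by showing that every vertex of $P$ is such a $0/1$ vector. Since $P$ lies in the $(n-1)$-dimensional affine hyperplane $\big\{\sum_j x_j=k\big\}$, at any vertex at least $n-1$ of the box inequalities must be tight, and these come from $n-1$ distinct coordinates (one cannot have $x_j=0$ and $x_j=1$ at once). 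Hence at least $n-1$ coordinates lie in $\{0,1\}$; writing $a$ for the number of them equal to $1$, the equation $\sum_j x_j=k$ forces the last coordinate to equal $k-a$, and $0\leq k-a\leq 1$ leaves only $a\in\{k-1,k\}$, so that coordinate is also in $\{0,1\}$. Thus every vertex is a $0/1$ vector summing to $k$, giving $P=\Hnk$ and the count of $\binom{n}{k}$ vertices by choice of support. (Alternatively, the integrality is immediate from total unimodularity of the defining system.)

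With the description in hand, the degenerate cases are immediate: for $k\in\{0,n\}$ there is a single vertex, and for $k=1$ (resp.\ $k=n-1$) the vertices are $e_1,\dots,e_n$ (resp.\ $\uno-e_1,\dots,\uno-e_n$), which are affinely independent and span an $(n-1)$-simplex. For $2\leq k\leq n-2$ I would exhibit $n$ affinely independent $0/1$ vertices to obtain $\dim\Hnk=n-1$, and then verify that each of the $2n$ box inequalities is irredundant: intersecting $\Hnk$ with $\{x_j=0\}$ yields a copy of $\Hs{n-1}{k}$ and with $\{x_j=1\}$ a copy of $\Hs{n-1}{k-1}$, each of which is $(n-2)$-dimensional precisely in the range $2\leq k\leq n-2$. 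This exhibits the $2n$ facets and shows the description is irredundant.

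Finally, for the facial structure I would use that every proper face of $\Hnk$ is the intersection of the facets containing it. As all facets have the form $\{x_j=0\}$ or $\{x_j=1\}$, any face equals $\Hnk^{I,J}$ for index sets $I,J\subseteq[n]$, and these are disjoint because $\{x_j=0\}$ and $\{x_j=1\}$ have empty intersection on the hyperplane. Fixing $x_i=1$ for $i\in I$ and $x_j=0$ for $j\in J$ leaves the coordinates indexed by $[n]\setminus(I\cup J)$ constrained to sum to $k-|I|$ and to lie in $[0,1]$, which is exactly a copy of $\Hs{n-|I|-|J|}{k-|I|}$. Reading off its dimension from the first part, this face is $(n-|I|-|J|-1)$-dimensional, and it is genuinely nonempty and of this dimension precisely when $0\leq|I|\leq k-1$ and $0\leq|J|\leq n-k-1$; setting the dimension equal to $i$ gives $|I|+|J|=n-i-1$ and recovers the stated constraints.

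The hard part is really the single step of the first paragraph, namely the reverse inclusion $P\subseteq\Hnk$: once the $0/1$ vertices and the two facet families $\{x_j=0\}$, $\{x_j=1\}$ are in place, the dimension, the facet count, and the description of the $i$-faces are routine bookkeeping. Everything is consistent with the treatment in \cite[Ex.~0.11]{ZieglerBOOK}.
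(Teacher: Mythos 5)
Your proof is correct, but there is nothing in the paper to compare it against: the authors do not prove this lemma, they record it as well known and point to \cite[Ex.~0.11]{ZieglerBOOK}. Your argument is the standard self-contained one that this citation encapsulates, and it is sound: integrality of the inequality description (at a vertex, the tight constraints must have rank $n$, forcing at least $n-1$ box constraints tight in distinct coordinates, whence the remaining coordinate is an integer in $[0,1]$ --- or, as you note, total unimodularity); irredundancy of the $2n$ inequalities for $2\leq k\leq n-2$ by identifying the faces they cut out as $\Hs{n-1}{k}$ and $\Hs{n-1}{k-1}$ and checking these are $(n-2)$-dimensional exactly in that range; and the facial structure from the fact that every proper face of a polytope is the intersection of the facets containing it, which both shows every $i$-face has the form $\Hnk^{I,J}$ and, via the dimension count $\dim \Hnk^{I,J}=n-|I|-|J|-1$, pins down the stated constraints on $|I|$ and $|J|$. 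What your write-up buys is self-containedness where the paper opts for brevity. Two micro-points you leave implicit and should state if this were written out in full: to conclude there are exactly $\binom{n}{k}$ vertices you also need that every $0/1$ point of $P$ with coordinate sum $k$ is in fact a vertex of $P$, which is immediate because such a point is a vertex of the cube $[0,1]^n\supseteq P$; and to count $2n$ facets you need the faces cut out by the $2n$ inequalities to be pairwise distinct, which holds for $2\leq k\leq n-2$ because their vertex sets differ.
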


From here, we proceed as follows. Consider $n$ fixed and $2\leq k\leq n-2$. To work with a full $(n-1)$-dimensional polytope we identify $\Hnk$ with its projection onto $\RR^{n} / (\RR \cdot \uno) \cong \RR^{n-1}$, where $\uno$ represents the all ones vector. We want to study when there is an \defn{$i$-preserving projection} $\pi:\RR^{n-1}\to\RR^{d}$, that is, one that strictly preserves every face of the $i$-skeleton of~$\Hnk$.  If so, Lemma~\ref{lem:projection_lemma} would ensure certain positive dependencies on the vector configuration induced by the image of the normal vectors to facets of $\Hnk$ under the associated projection~$\tau$. We state explicitly these dependencies below. In Section~\ref{sec:hypersimplicial-neighborly}
we provide an in-depth study of the point configurations that yield vector configurations satisfying these dependencies and show that if $d$ is not large enough, then they cannot all hold simultaneously.

By the description in Lemma \ref{lem:hypersimplex}, for $2\leq k\leq n-2$, $\Hnk \subset \RR^{n} / (\RR \cdot \uno)$ has $2n$ facets whose normal vectors we may pair up as $\{\bfm_j,\bfn_j\}$ for $j\in[n]$, where $\bfm_j$ and $\bfn_j$ correspond to the inequalities $x_j\geq 0$ and $x_j\leq 1$ after the projection onto $\RR^{n} / (\RR \cdot \uno)$, respectively. They satisfy 
\begin{equation} \label{eq:symsum}
		\bfm_j+\bfn_j=0 \text{ for $j\in [n]$ and }
	\sum_{j\in[n]}\bfm_j =\sum_{j\in[n]}\bfn_j =0.
\end{equation}

\begin{example}
 Before we continue, we provide a concrete example of our set-up. Consider Figure \ref{fig:ziegler}. At the top of the figure we have the hypersimplex $\Delta_{4,2}$, which is a $3$-dimensional octahedron. By construction, its ambient space is $\mathbb{R}^4$, but we isomorphically project it onto $\RR^{4} / (\RR \cdot (1,1,1,1))\cong \RR^{3}$. Thus, when projecting it to the plane we get a map $\pi:\mathbb{R}^3\to \mathbb{R}^2$.

\begin{figure}[htpb]
	\includegraphics[width=1\linewidth]{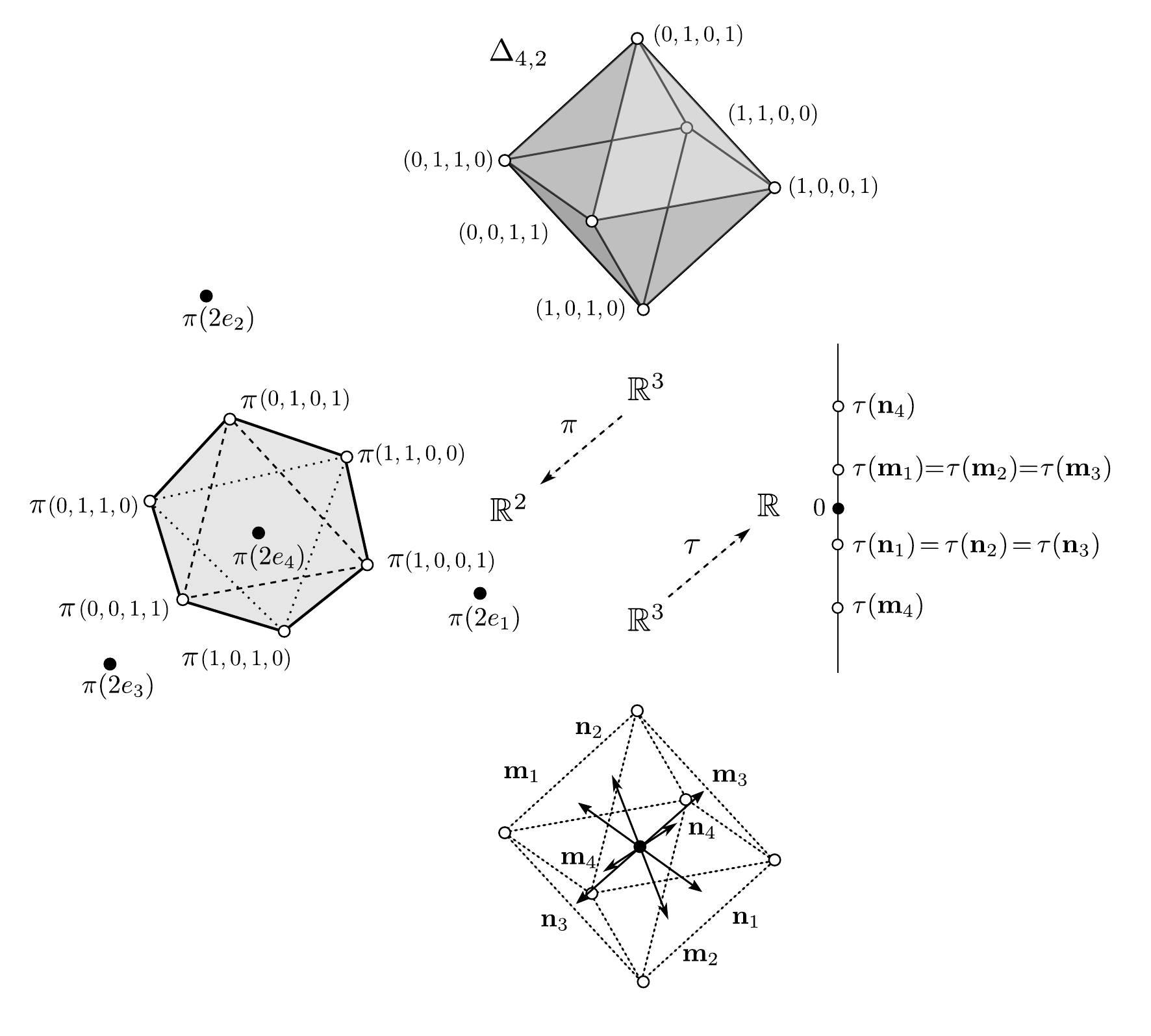}
	\caption{Using the Projection Lemma on a projection for the hypersimplex $\Delta_{4,2}$, which is an octahedron.}\label{fig:ziegler}
\end{figure}

At the left side of the figure we have the image of $\Delta_{4,2}$ under $\pi$. We also show the images of $2\bfe_1, 2\bfe_2, 2\bfe_3, 2\bfe_4$. Note that, as expected by Corollary~\ref{cor:equivalence}, the images of the vertices of $\Delta_{4,2}$ are precisely the midpoints of the edges $\pi(2\bfe_i)\pi(2\bfe_j)$, for $1\leq i<j\leq 4$, all of which lie in strictly convex position. Even though this is evident from the figure, we can also verify it using the Projection Lemma.

To do so, consider the normal vectors to the faces of $\Delta_{4,2}$. These are shown at the bottom of the figure with labels $\bfm_j$, $\bfn_j$ for $j=1,2,3,4$. Since $\pi$ has codimension~$1$, it induces a projection $\tau$ that takes these normal vectors to~$\RR$, which is shown at the right side of the figure. So, consider for example the vertex $(0,1,0,1)$. This vertex lies on faces with normal vectors $\bfm_1,\bfn_2,\bfm_3,\bfn_4$, and $0$ is strictly contained in the interior of the convex hull of $\tau(\{\bfm_1,\bfn_2,\bfm_3,\bfn_4\})$. Thus, by Lemma \ref{lem:projection_lemma} we verify that $(0,1,0,1)$ is strictly preserved under $\pi$.

The Projection Lemma also determines whether higher dimensional faces are preserved or not. Consider for example the edge $(0,1,0,1)(1,0,0,1)$ of $\Delta_{4,2}$. It is contained in the faces with normal vectors $\bfm_3$ and $\bfn_4$. Note that $0$ does not lie in the interior of the convex hull of $\tau(\{\bfm_3,\bfn_4\})$. By Lemma~\ref{lem:projection_lemma} we conclude that $\pi$ does not preserve the edge, which can be verified by inspection. We invite the reader to check the preservation of the remaining vertices and faces.
\end{example}

To use these tools in general, we combine Lemma~\ref{lem:projection_lemma} with the facial structure of the hypersimplex to get:

\begin{lemma}
	\label{lem:configuration}

	Let $\pi:\RR^{n-1}\to\RR^{d}$ be a linear surjection and~$\tau:\RR^{n-1}\to \RR^{n-d-1}$
	its associated projection.
	Let $2\leq k\leq n-2$, and let $\set {\bfn_j,\bfm_{j'}}{j,j'\in[n]}$ be the normal vectors of~$\Hnk$ described above.
	Then 
	$\tau(\set {\bfn_j,\bfm_{j'}}{j,j'\in[n]})$ 
	is an $(n-d-1)$-dimensional configuration of vectors with the following strictly positive dependencies:
	\begin{enumerate}[a)]
		\item $0\in \intr\conv\{\tau(\bfm_j): j\in[n]\}$,
		\item $0\in \intr\conv\{\tau(\bfn_j): j\in[n]\}$,
		\item $0\in \relint\conv\{\tau(\bfm_j),\tau(\bfn_j)\}$ for $j\in[n]$.
	\end{enumerate}

	The projection~$\pi$ is $i$-preserving if and only if:
	\begin{itemize}[leftmargin=\parindent]
	 \item 	When $i=0$, additionally to a), b) and c), for every disjoint $I,J\subset[n]$ with $|I|=k$ and $|J|=n-k$, we have

	\begin{enumerate}
		\item[d)] $0\in \intr\conv\left(\{\tau(\bfm_j): j\in J\}\cup \{\tau(\bfn_j): j\in I\}\right)$ and 
		\item[e)] $0\in \intr\conv\left(\{\tau(\bfm_j): j\in I\}\cup \{\tau(\bfn_j): j\in J\}\right)$.
	\end{enumerate}

        \item When $i\geq 1$, additionally to a), b) and c), for every disjoint $I,J\subset[n]$ such that $|I|\leq k-1$, $|J|\leq n-k-1$ and $|I|+|J|=n-i-1$, we have

	\begin{enumerate}
		\item[d')] $0\in \intr\conv\left(\{\tau(\bfm_j): j\in I\}\cup \{\tau(\bfn_j): j\in J\}\right)$ and
		\item[e')] $0\in \intr\conv\left(\{\tau(\bfm_j): j\in J\}\cup \{\tau(\bfn_j): j\in I\}\right)$.
	\end{enumerate}

	\end{itemize}

\end{lemma}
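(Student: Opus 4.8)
The plan is to apply the Projection Lemma (Lemma~\ref{lem:projection_lemma}) face by face, after first disposing of the structural claims (a)--(c), which hold for \emph{any} associated projection $\tau$.

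The claim (c) is immediate: by~\eqref{eq:symsum} we have $\tau(\bfm_j)=-\tau(\bfn_j)$, so $0$ is the midpoint of the segment $\conv\{\tau(\bfm_j),\tau(\bfn_j)\}$ and hence lies in its relative interior. For (a) and (b), and for the asserted dimension $n-d-1$, I would use the elementary fact that if vectors $v_1,\dots,v_m$ linearly span a space and satisfy $\sum_\ell v_\ell=0$, then they positively span it, so that $0\in\intr\conv\{v_\ell\}$: any $w=\sum_\ell a_\ell v_\ell$ can be rewritten as $w=\sum_\ell (a_\ell+t)v_\ell$ for arbitrary $t$, and a large $t$ makes all coefficients nonnegative. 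The vectors $\{\bfm_j\}_{j\in[n]}$ are the images of $\{-\bfe_j\}$ in $\RR^{n}/(\RR\cdot\uno)$, so they span $\RR^{n-1}$ and sum to $0$ by~\eqref{eq:symsum}; pushing forward through the surjection $\tau$ yields (a), and the same argument with $\{\bfn_j\}$ yields (b). Since $\{\bfm_j,\bfn_j\}$ already span $\RR^{n-1}$, their images span $\RR^{n-d-1}$, which is the dimension statement.

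For the equivalence I would first reduce to the top faces of the skeleton. Strict preservation is monotone downward in the face lattice: if $F$ is strictly preserved and $G\subseteq F$ is a subface, then every facet containing $F$ also contains $G$, so by Lemma~\ref{lem:projection_lemma} the set of normals governing $G$ contains the one governing $F$; as the property $0\in\intr\conv$ is inherited when the convex hull grows (interiors are monotone under inclusion of convex sets, all taken in the fixed ambient space $\RR^{n-d-1}$), $G$ is strictly preserved as well. Because the face lattice of $\Hnk$ is graded, every face of dimension $<i$ lies in some $i$-face, so $\pi$ is $i$-preserving if and only if every $i$-dimensional face is strictly preserved. It then remains to identify, for each $i$-face, the facets containing it. By Lemma~\ref{lem:hypersimplex} the $i$-faces are the $\Hnk^{I,J}$ with $|I|\le k-1$, $|J|\le n-k-1$, $|I|+|J|=n-i-1$ when $i\ge1$, and the vertices ($|I|=k$, $|J|=n-k$) when $i=0$. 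I would check that the facets containing $\Hnk^{I,J}$ are exactly $\{x_a\le1\}_{a\in I}$ and $\{x_b\ge0\}_{b\in J}$: no facet $\{x_c=0\}$ or $\{x_c=1\}$ with $c\notin I\cup J$ contains it, because $\Hnk^{I,J}\cong\Delta_{\,n-|I|-|J|,\,k-|I|}$ is a hypersimplex of positive dimension $i$ on which every free coordinate is nonconstant. Hence the governing normals are $\{\bfn_a:a\in I\}\cup\{\bfm_b:b\in J\}$, and Lemma~\ref{lem:projection_lemma} turns strict preservation into condition (e$'$) (resp.\ (d)/(e) for vertices). Finally (d$'$) is equivalent to (e$'$) (and (d) to (e)) by negation, since $\tau(\bfm_j)=-\tau(\bfn_j)$ and $0\in\intr\conv(S)\Leftrightarrow 0\in\intr\conv(-S)$.

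The part I expect to require the most care is this downward-monotonicity reduction together with the exact determination of the facet set of each $i$-face: one must be sure both that enlarging the normal set cannot destroy the property $0\in\intr\conv$, and that no spurious facet $\{x_c=0\}$ or $\{x_c=1\}$ with $c\notin I\cup J$ sneaks into the list, which is precisely where the positive-dimensionality of the subhypersimplex $\Delta_{\,n-|I|-|J|,\,k-|I|}$ is used.
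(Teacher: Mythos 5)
Your proof is correct and takes essentially the same route as the paper's: derive (a)--(c) from the linear relations \eqref{eq:symsum}, apply the Projection Lemma face by face using the facet description of $\Hnk^{I,J}$ from Lemma~\ref{lem:hypersimplex}, and use the central symmetry $\tau(\bfm_j)=-\tau(\bfn_j)$ to pass between (d)/(d$'$) and (e)/(e$'$). If anything, you are more careful than the paper, which leaves implicit the two points you single out: the downward monotonicity showing that strict preservation of the $i$-faces already forces strict preservation of the whole $i$-skeleton, and the verification that no facet indexed outside $I\cup J$ contains the face $\Hnk^{I,J}$.
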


\begin{proof}
	The positive dependencies in $a)$, $b)$ and $c)$ follow directly from the linearity of $\tau$ and~\eqref{eq:symsum}. Note that \eqref{eq:symsum} additionally states that the vector configuration is symmetric around the origin with pairing $\tau(\bfm_j)=-\tau(\bfn_j)$.

	For $d)$ we use that $\pi$ preserves the vertices of $\Hnk$. Each vertex of $\Hnk$ lies in exactly $k$ hyperplanes of the form $x_j=1$ and $n-k$ hyperplanes of the form $x_j=0$. From here we obtain, respectively, complementary index sets $I$ and $J$ of $[n]$. The conclusion then follows from Lemma \ref{lem:projection_lemma}.

	The analysis for $d')$ is similar considering the description of the $i$-faces of~$\Hnk$ given in Lemma~\ref{lem:hypersimplex}.

	Finally, the family of positive dependencies in $e)$ and $e')$ follow respectively from $d)$ and $d')$ and the symmetry around the origin.
\end{proof}
 
Since the configuration of vectors is symmetric around the origin, we obtain a proof of the following observation.

\begin{cor}
\label{cor:symmetry}
	An $i$-preserving projection $\pi:\RR^{n-1}\to\RR^{d}$ exists for $\Hnk$ if and only if it exists for $\Delta_{n,n-k}$.
\end{cor}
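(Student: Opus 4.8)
The plan is to exploit the central symmetry of the hypersimplex. The key geometric fact is that, after the identification with $\RR^{n}/(\RR\cdot\uno)\cong\RR^{n-1}$ used throughout this section, the polytope $\Delta_{n,n-k}$ is the antipodal image of $\Hnk$. To establish this I would use the complementation involution $\phi(x)=\uno-x$ on $\RR^{n}$: it is an affine isomorphism sending the vertex $e_I$ of $\Delta_{n,k}$ (with $|I|=k$) to $e_{[n]\ssm I}=\uno-e_I$, a vertex of $\Delta_{n,n-k}$, and hence maps $\Delta_{n,k}$ onto $\Delta_{n,n-k}$. Since the class of $\uno$ is $0$ in the quotient, $\phi$ descends to the linear antipodal map $\sigma:\bfv\mapsto-\bfv$ on $\RR^{n-1}$, and under the identification above one gets $\sigma(\Delta_{n,k})=\Delta_{n,n-k}$.

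Next I would transport projections across $\sigma$. Given an $i$-preserving projection $\pi:\RR^{n-1}\to\RR^{d}$ for $\Hnk$, I set $\pi':=\pi\circ\sigma$ and claim it is $i$-preserving for $\Delta_{n,n-k}$. Because $\sigma$ is a linear automorphism of $\RR^{n-1}$ carrying the face lattice of $\Delta_{n,n-k}$ bijectively onto that of $\Delta_{n,k}$ while preserving dimensions, a face $F$ of $\Delta_{n,n-k}$ satisfies the three conditions of strict preservation under $\pi'$ exactly when $\sigma(F)=-F$ does under $\pi$; in particular $\sigma$ restricts to a bijection between the two $i$-skeleta, so $\pi'$ is $i$-preserving iff $\pi$ is. Running the same argument with the roles of $k$ and $n-k$ interchanged yields the converse implication, establishing the equivalence.

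Alternatively—and this is the route directly suggested by the preceding lemma—I would argue on the level of the dependency conditions of Lemma~\ref{lem:configuration}. There the configuration $\tau(\set{\bfn_j,\bfm_{j'}}{j,j'\in[n]})$ is centrally symmetric, with $\tau(\bfm_j)=-\tau(\bfn_j)$, so applying the antipodal map to condition d') turns it into condition e'); hence for this symmetric configuration d') holds iff e') holds. Passing from $\Hnk$ to $\Delta_{n,n-k}$ replaces the size constraints $|I|\leq k-1$, $|J|\leq n-k-1$ by $|I|\leq n-k-1$, $|J|\leq k-1$, which is precisely the relabeling $(I,J)\mapsto(J,I)$ together with the swap of d') and e'). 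Since the full family of conditions is invariant under this swap, a projection $\tau$ realizes the $i$-preserving conditions for $\Hnk$ exactly when it realizes them for $\Delta_{n,n-k}$, so the two existence statements coincide.

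The only delicate point—and thus the (mild) main obstacle—is to verify that the correspondence respects \emph{strict} preservation, and not merely the underlying polytopes: one must check that conditions \ref{it:piFface}, \ref{it:pi-1F} and \ref{it:samedim} all transfer. This is immediate once one records that $\sigma$ (equivalently $\phi$) is a linear (respectively affine) isomorphism, since such maps send $i$-faces to $i$-faces and preserve combinatorial isomorphism type, so each of the three defining conditions passes across verbatim.
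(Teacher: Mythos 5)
Both of your arguments are correct, and your ``alternative'' route is in fact the paper's own proof: the paper deduces Corollary~\ref{cor:symmetry} in a single sentence from Lemma~\ref{lem:configuration}, using that by \eqref{eq:symsum} the configuration $\tau(\set{\bfm_j,\bfn_j}{j\in[n]})$ is centrally symmetric with $\tau(\bfm_j)=-\tau(\bfn_j)$, so that conditions $d')$ and $e')$ are equivalent to one another and the full family of dependency conditions for $\Hnk$ (constraints $|I|\leq k-1$, $|J|\leq n-k-1$) is carried onto the family for $\Delta_{n,n-k}$ by the relabelling $(I,J)\mapsto (J,I)$ — exactly as you say. Your primary route, conjugating $\pi$ by the involution $\phi(x)=\uno-x$, which descends to the antipodal map on $\RR^{n}/(\RR\cdot\uno)$, is a genuinely different packaging: it bypasses Lemma~\ref{lem:configuration} and the Projection Lemma entirely, relying only on the (correctly verified) fact that the three conditions defining strict preservation transfer across a linear automorphism, so that $\pi$ is $i$-preserving for $\Hnk$ if and only if $\pi\circ\sigma$ is $i$-preserving for $\Delta_{n,n-k}$. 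The paper only gestures at this argument without carrying it out (``Of course, $\Hnk$ and $\Delta_{n,n-k}$ are affinely equivalent, so Corollary~\ref{cor:symmetry} should not be too unexpected''), and instead records a separate short geometric proof of the related statement $\cd(\Knk)=\cd(K_n^{(n-k)})$. What your transport argument buys is self-containedness; what the paper's dual formulation buys is that it sits inside the same centrally symmetric Gale-dual picture that drives all of Section~\ref{sec:hypersimplicial-neighborly}, where the reduction to the case $n\geq 2k$ is then immediate in the same language.
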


Of course, $\Hnk$ and $\Delta_{n,n-k}$ are affinely equivalent, so Corollary~\ref{cor:symmetry} should not be too unexpected. However, the fact that $\cd(\Knk)=\cd(K_n^{(n-k)})$ is not entirely obvious from the definition of $\cd$. It has an alternative short geometric proof. Suppose $f$ is a convex embedding of $K_{n}^{(k)}$ into $\RR^d$. Consider the barycenter $b$ of $f(V)$. The barycenter $a$ of any $k$-subset of $f(V)$, the barycenter $c$ of the complementary $(n-k)$-subset and $b$ are collinear. The segment $ac$ is split in ratio $k:n-k$ by~$b$. Therefore, the set of $(n-k)$-barycenters is a homothetic copy of the set of $k$-barycenters. Since the second is in convex position, the first one is as well.

\section{Hypersimplicial-neighborly configurations}\label{sec:hypersimplicial-neighborly}

In this section we prove Theorem~\ref{thm:skeleton}. By Corollary \ref{cor:symmetry} we may focus only on the cases $n\geq 2k$, and therefore it is enough to prove the following:
\[
	d(n,k,i)=\begin{cases}
		2k+2i &\text{for $n\geq 2k+2i+2$}\\
                n-1 &\text{if $2k\leq n \leq 2k+2i+1$, $k\in A_{n,i}$},\\
                n-2 &\text{if $2k\leq n \leq 2k+2i+1$, $k\notin A_{n,i}$}.\\
	\end{cases}
\]
	Recall 
	that $A_{n,i} = \{1,2,\ldots,i+1\}\cup \{n-i-1,n-i,\ldots,n-1\}$ is the range of some exceptional values for $k$.

We actually prove a stronger result, as we completely characterize all projections $\pi:\RR^{n-1}\to \RR^d$ that preserve the $i$-skeleton of~$\Delta_{n,k}$. 
In the remainder of the section we assume $n\geq 2k\geq 2$ and, as before, $\pi$ is linear and surjective and $\tau:\RR^{n-1}\to \RR^{n-d-1}$ denotes the projection associated to~$\pi$.

Our characterization is in terms of the point configuration \defn{$\Ppi:=\set{\pi(\bfe_i)}{i\in [n]}$}%
; more precisely, in terms of its neighborliness and almost neighborliness. A point configuration~$S$ is called \defn{$j$-neighborly} if every subset of at most~$j$ points of~$S$ is the vertex set of a face of $\conv(S)$ (thus $1$-neighborly corresponds to being in convex position), and~$S$ is \defn{$j$-almost neighborly} if every subset of at most $j$ points of $S$ lies in a common face of $\conv(S)$.

The relation with neighborliness was already observed by Halman, Onn and Rothblum, who used cyclic $2k$-polytopes (which are $k$-neighborly) to provide convex embeddings of $k$-uniform hypergraphs into~$\RR^{2k}$~\cite{HalmanOnnRothblum2007}. Their observation can be extended to higher dimensional skeleta of the hypersimplex.
\begin{lemma}\label{lem:neighborlyworks}
If $1\leq k\leq \frac n2$ and $\Ppi$ is $(k+i)$-neighborly, then $\pi$ is an $i$-preserving projection of~$\Hnk$.
\end{lemma}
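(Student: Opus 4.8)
The plan is to show that $(k+i)$-neighborliness of $\Ppi$ forces all the positive-dependency conditions from Lemma~\ref{lem:configuration} to hold, and hence $\pi$ is $i$-preserving. The link between neighborliness of the point configuration $\Ppi=\set{\pi(e_j)}{j\in[n]}$ in $\RR^d$ and positive dependencies among the $\tau(\bfm_j),\tau(\bfn_j)$ in $\RR^{n-d-1}$ should come from Gale duality: the vector configuration $\set{\tau(\bfm_j)}{j\in[n]}$ (with $\tau(\bfn_j)=-\tau(\bfm_j)$ by~\eqref{eq:symsum}) is essentially the Gale dual of $\Ppi$. Under this duality, a subset being a face of $\conv(\Ppi)$ corresponds precisely to $0$ lying in the relative interior of the convex hull of the complementary dual vectors. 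So the first step is to make this Gale-dual correspondence explicit: identify that the affine Gale diagram of $\Ppi\subset\RR^d$ is carried by the vectors $\tau(\bfm_j)$ in the $(n-d-1)$-dimensional space $\ker(\pi)^\ast$.

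\emph{First} I would verify conditions d') and e') of Lemma~\ref{lem:configuration}. Fix disjoint $I,J\subseteq[n]$ with $|I|\le k-1$, $|J|\le n-k-1$, $|I|+|J|=n-i-1$. Condition d') asks that $0\in\intr\conv(\{\tau(\bfm_j):j\in I\}\cup\{\tau(\bfn_j):j\in J\})$. Using $\tau(\bfn_j)=-\tau(\bfm_j)$, this is a statement about $0$ being an interior point of the convex hull of a signed subset of the Gale vectors. By Gale duality, such an interior-point condition is equivalent to the complementary index set $[n]\ssm(I\cup J)$ — which has size $i+1$ — being (together with the sign pattern) a face of $\Ppi$, or more precisely to every point of a small subset being a vertex of a face. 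The crucial numerical input is that $|[n]\ssm(I\cup J)|=i+1$ together with $|I|\le k-1$; the total "deficiency" of points one must certify as lying on a face is at most $(k-1)+(i+1)=k+i$, which is exactly the hypothesis. So $(k+i)$-neighborliness of $\Ppi$ supplies precisely enough faces to guarantee the required interior-point condition. Condition e') then follows by the origin-symmetry $\tau(\bfm_j)=-\tau(\bfn_j)$ exactly as in the proof of Lemma~\ref{lem:configuration}.

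\emph{Next}, the base conditions a), b), c) must hold; these are automatic from~\eqref{eq:symsum} and the linearity of $\tau$, as already noted in Lemma~\ref{lem:configuration}, and require nothing beyond $\Ppi$ being a genuine $n$-point configuration spanning $\RR^d$ (which injectivity and surjectivity of $\pi$ guarantee). I would also handle the $i=0$ case separately but in parallel: there conditions d) and e) involve $|I|=k$, $|J|=n-k$, so the complementary set is empty and the relevant certificate is that every $k$-subset spans a face — i.e.\ $k$-neighborliness — matching $(k+0)$-neighborliness, consistent with Halman--Onn--Rothblum's original cyclic-polytope observation.

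\emph{The main obstacle} will be pinning down the Gale-duality dictionary with the correct genericity and dimension bookkeeping: one must be careful that "$0$ in the \emph{interior}" (not merely relative interior) corresponds to the subset being the vertex set of a face with the points being genuine vertices, and that the sign-separation induced by splitting indices into $I$ (taking $\bfm$) and $J$ (taking $\bfn$) matches the oriented-matroid/Gale condition correctly. In particular I must check that the face certified by neighborliness has the right dimension so that the dual cone is full-dimensional, yielding \emph{interior} rather than boundary membership, and that no degeneracies collapse the configuration. Once the duality is set up cleanly, the combinatorial count $(k-1)+(i+1)=k+i$ does all the work and the rest is routine.
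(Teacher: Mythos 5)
Your outline reproduces the paper's alternative ``dual proof'' of this lemma (the paper's primary proof is different: a direct primal argument showing that for each $(k+i)$-set $A$ the face $\Hnk\cap\bigcap_{j\notin A}\{x_j=0\}$ is strictly preserved, because $\pi$ is an affine isomorphism on its affine span and the supporting hyperplane of the simplex face $\conv\set{\pi(e_j)}{j\in A}$ of $\Ppi$ also supports its image; every $i$-face of $\Hnk$ lies in such a face). Your numerical count is also the right one: the complement of $J$ has size $|I|+|[n]\ssm(I\cup J)|\leq (k-1)+(i+1)=k+i$. However, the step you lean on is wrong as stated: the mixed-sign condition $0\in\intr\conv\left(\{\tau(\bfm_j):j\in I\}\cup\{\tau(\bfn_j):j\in J\}\right)$ is \emph{not} equivalent, via Gale duality, to a face condition on $\Ppi$ ``together with the sign pattern''. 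The standard dictionary (Lemma~\ref{lem:Galeneigh}) only speaks about pure, unsigned subsets of $M$; flipping the signs of the vectors indexed by $J$ produces a reorientation of $M$, which is a Gale dual of a \emph{different} configuration (a projective flip of the points of $\Ppi$ indexed by $J$), about which the $(k+i)$-neighborliness of $\Ppi$ says nothing. You flag exactly this as ``the main obstacle,'' but the proposal never resolves it, and the equivalence route would fail.

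The missing idea --- the crux of the paper's dual proof --- is that no signed dictionary is needed, because the lemma only requires an implication and interior containment is monotone under adding points. It suffices to show $0\in\intr\conv\set{\tau(\bfm_j)}{j\in J}$: this set is contained in the set of condition e') of Lemma~\ref{lem:configuration} (and in the set of condition d) when $i=0$), so e') follows, and d') then follows from the central symmetry $\tau(\bfn_j)=-\tau(\bfm_j)$. For this pure subset the dictionary is available: by Farkas' lemma, $0\in\intr\conv\set{\tau(\bfm_j)}{j\in J}$ holds if and only if every open linear halfspace contains some $\tau(\bfm_j)$ with $j\in J$; by Lemma~\ref{lem:Galeneigh}, $(k+i)$-neighborliness puts at least $k+i+1$ vectors of $M$ in every open halfspace; and since $J$ misses at most $k+i$ indices of $[n]$ (your count), at least one of those vectors is indexed by $J$. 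With this replacement your argument closes, and your worries about interior versus relative interior and dual-cone dimensions dissolve, since Farkas' lemma delivers the interior statement directly.
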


\begin{proof}
	Since $\Ppi$ is $(k+i)$-neighborly, any set of at most $k+i$ vertices forms a simplex face of~$\Ppi$.

	For a subset $A\subset [n]$ of size $k+i$, consider the faces $F=\Hnk\cap_{j\notin A} \{x_j=0\}$ and $G=\conv\set{\pi({\bfe_i})}{i\in A}$ of $\Hnk$ and~$\Ppi$, respectively. Then $\pi$ restricted to the affine span of~$F$ is an affine isomorphism into the affine span of~$G$. The supporting hyperplane for $G$ in~$\Ppi$ is also supporting for $\pi(F)$ in $\pi(\Hnk)$, and hence $F$ is strictly preserved.
	
	We conclude the proof by observing that any $i$-face of $\Hnk$ belongs to one such face~$F$. 
\end{proof}

We will also use Gale duality (see \cite[Lec.~6]{ZieglerBOOK} or \cite[Sec.~5.6]{MatousekLODG} for nice introductions). 
A short computation leads to the following observation. It can also be easily seen by comparing our coordinate-free definition for~$\tau$ with Ewald's introduction to Gale transforms~\cite[Sec~II.4]{Ewald}. We omit the details.

\begin{lemma}\label{lem:Gale}
 The vector configuration $M=\set{\tau(\bfm_i)}{1\leq i\leq n}$ is a Gale transform of $\Ppi$.
\end{lemma}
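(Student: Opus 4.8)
The plan is to show that the vector configuration $M = \set{\tau(\bfm_i)}{1\leq i\leq n}$ is a Gale transform of the point configuration $\Ppi = \set{\pi(e_i)}{i\in[n]}$ by tracing carefully through the definitions. Recall that $\Ppi$ lives in $\RR^d$ and has $n$ points, so a Gale transform of it is a configuration of $n$ vectors in $\RR^{n-d-1}$, which matches the ambient dimension of $M$ since $\tau:\RR^{n-1}\to\RR^{n-d-1}$. The core of the argument is purely linear algebra: I would set up the standard diagram relating the projection $\pi$ and its associated projection $\tau$ and observe that, under the canonical identifications of the excerpt, $\tau$ is precisely the adjoint of the inclusion $\iota:\ker(\pi)\hookrightarrow\RR^{n-1}$.

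First I would recall the defining property of a Gale transform. Writing the affine point configuration $\Ppi$ as a matrix whose columns are the vectors $\binom{\pi(e_i)}{1}\in\RR^{d+1}$, a Gale transform is given by the rows of any matrix whose columns form a basis of the kernel of this $(d+1)\times n$ matrix; equivalently, the Gale vectors are the images of the standard basis vectors of $\RR^n$ under the orthogonal projection onto the space of affine dependences of $\Ppi$. So the first step is to identify the space of affine dependences of $\Ppi$: a vector $\lambda\in\RR^n$ is an affine dependence precisely when $\sum_i \lambda_i\pi(e_i)=0$ and $\sum_i\lambda_i=0$. Since $\Hnk$ was identified with its image in $\RR^n/(\RR\cdot\uno)\cong\RR^{n-1}$ and $\pi$ acts on this quotient, the condition $\sum_i\lambda_i=0$ is exactly the statement that $\lambda$ lies in the hyperplane $\uno^\perp\cong\RR^{n-1}$, and the condition $\sum_i\lambda_i\pi(e_i)=0$ says $\lambda\in\ker(\pi)$ inside that hyperplane.

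Next I would match $\tau$ to the projection defining the Gale vectors. By the excerpt's coordinate-free description, $\tau=\iota^\ast$ where $\iota$ is the inclusion of $\ker(\pi)\cong\RR^{n-d-1}$ into $\RR^{n-1}$. Feeding in the normal vectors $\bfm_i$, which correspond to the coordinate functionals $x_i$ (restricted to the quotient), $\tau(\bfm_i)$ is the image of the $i$-th coordinate vector under the orthogonal projection onto $\ker(\pi)$. This is exactly the recipe for the Gale vectors computed above, so $M$ and the Gale transform of $\Ppi$ coincide up to the choice of basis of the dependence space, which is all that a Gale transform is defined up to. I would invoke Ewald's treatment \cite[Sec.~II.4]{Ewald} to confirm that this orthogonal-projection description matches his definition, closing the identification.

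The main obstacle is bookkeeping rather than conceptual: one must be scrupulous about the two quotients/identifications in play — the homogenization $\RR^{d+1}$ used to define affine Gale transforms versus the quotient $\RR^n/(\RR\cdot\uno)$ used to make $\Hnk$ full-dimensional — and verify that the extra ``affine'' coordinate in the Gale setup is accounted for exactly by the constraint $\sum_i\lambda_i=0$ built into the quotient by $\uno$. Once these two normalizations are lined up, the equality $M=$ Gale transform of $\Ppi$ is immediate, which is why the authors relegate the computation to a remark and omit the routine details.
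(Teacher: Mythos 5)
Your proof is correct and follows exactly the route the paper gestures at: the paper omits the details, remarking only that the claim follows from ``a short computation'' comparing the coordinate-free definition of $\tau=\iota^\ast$ with the standard (Ewald) definition of the Gale transform, and your argument supplies precisely that computation, identifying the affine dependence space of $\Ppi$ with $\ker(\pi)$ inside $\uno^\perp$ and $\tau(\bfm_i)$ with the projection of $e_i$ onto it (the sign ambiguity from the facet-normal convention being absorbed by the basis-change freedom in the definition of a Gale transform).
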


A first consequence of this observation is that, for a point configuration $S$, the property of having all $k$-barycenters in convex position only depends on its underlying oriented matroid (in particular, it is invariant under admissible projective transformations, i.e.\ those where the hyperplane at infinity does not separate~$S$). Indeed, by the Projection Lemma~\ref{lem:projection_lemma} (and Lemma~\ref{lem:configuration}), the facial structure of $P_k(S)$, the convex hull of the $k$-barycenters, only depends on the oriented matroid of the vector configuration $\set{\tau(\bfm_j),\tau(\bfn_j)}{j\in[n]}$, which is completely determined by the oriented matroid of $M$ by the central symmetry. This might be counter-intuitive at first, as barycenters are not preserved by projective transformations. However, the interpretation in terms of $k$-sets in Section~\ref{sec:ksets} gives a clear explanation for why the combinatorial type of~$P_k(S)$ is an oriented matroid invariant.

We will use the Gale dual characterization of neighborliness and almost neighborliness. This result is well known (see for example~\cite{NillPadrol2015}), but we include a short schema for the proof for completeness.

\begin{lemma}\label{lem:Galeneigh}
 $\Ppi$ is $j$-neighborly if and only if every open linear halfspace contains at least $j+1$ vectors of $M$, and $\Ppi$ is $j$-almost neighborly if every closed linear halfspace contains at least $j+1$ vectors of $M$.
\end{lemma}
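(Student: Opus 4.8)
The plan is to translate both properties into statements about positive combinations of the Gale vectors $M=\set{\tau(\bfm_\ell)}{\ell\in[n]}$ and then apply two classical theorems of the alternative. By Lemma~\ref{lem:Gale}, $M$ is a Gale transform of $\Ppi$, so I would invoke the standard Gale criterion for faces (see \cite[Lec.~6]{ZieglerBOOK}): for $A\subseteq[n]$, the set $\set{\pi(e_\ell)}{\ell\in A}$ is the vertex set of a face of $\conv(\Ppi)$ if and only if $0\in\relint\conv\set{\tau(\bfm_\ell)}{\ell\in[n]\smallsetminus A}$, and it lies in a common face if and only if $0\in\conv\set{\tau(\bfm_\ell)}{\ell\in[n]\smallsetminus A}$. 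The point to keep in mind throughout is that, by dependency~(a) of Lemma~\ref{lem:configuration}, $0\in\intr\conv M$, so $M$ positively spans its ambient space; this is what lets me convert separating functionals into halfspaces missing the right vectors.

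For the neighborly case I would first record the elementary fact that $0\in\relint\conv(C)$ for a finite set $C$ exactly when $0$ is a \emph{strict} convex combination of $C$, i.e.\ $\sum_{v\in C}\lambda_v v=0$ with all $\lambda_v>0$. Combining this with the face criterion, $\Ppi$ is $j$-neighborly if and only if, for every $A$ with $|A|\le j$, there is a strictly positive dependency among $\set{\tau(\bfm_\ell)}{\ell\notin A}$. I would then apply \emph{Stiemke's theorem of the alternative}: no such dependency exists precisely when there is a functional $w$ with $\sprod{w}{\tau(\bfm_\ell)}\ge 0$ for all $\ell\notin A$ and $>0$ for at least one index. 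For ``$\Leftarrow$'', assuming every open halfspace meets at least $j+1$ vectors, such a $w$ would make the open halfspace $\{\sprod{w}{\cdot}<0\}$ avoid all $\tau(\bfm_\ell)$ with $\ell\notin A$, leaving at most $|A|\le j$ vectors, a contradiction. For ``$\Rightarrow$'' I argue contrapositively: an open halfspace $\{\sprod{w}{\cdot}<0\}$ with at most $j$ vectors yields the index set $A$ of those vectors, and positive spanning of $M$ guarantees some $\tau(\bfm_{\ell_0})$ with $\sprod{w}{\tau(\bfm_{\ell_0})}>0$, so Stiemke produces a non-face $A$ with $|A|\le j$.

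For the almost-neighborly case I would run the parallel argument one layer up, replacing relative interiors by convex hulls and Stiemke by \emph{Gordan's theorem of the alternative}: $0\notin\conv\set{\tau(\bfm_\ell)}{\ell\notin A}$ holds exactly when there is a functional $w$ with $\sprod{w}{\tau(\bfm_\ell)}>0$ for \emph{every} $\ell\notin A$. Assuming every closed halfspace meets at least $j+1$ vectors, such a $w$ would make the closed halfspace $\{\sprod{w}{\cdot}\le 0\}$ avoid all $\tau(\bfm_\ell)$ with $\ell\notin A$, again leaving at most $j$ vectors and contradicting the hypothesis; hence every $A$ with $|A|\le j$ lies in a common face, i.e.\ $\Ppi$ is $j$-almost neighborly, which is the stated implication.

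The step I expect to be the main obstacle is getting the pairing exactly right between the \textbf{strict} and the \textbf{non-strict} sides: open halfspaces and relative interiors must be matched through Stiemke, while closed halfspaces and plain convex hulls must be matched through Gordan, and the two theorems of the alternative are genuinely different (one yields a functional nonnegative everywhere and positive somewhere, the other a functional strictly positive everywhere). The role of $0\in\intr\conv M$ is the subtle ingredient that makes the neighborly ``$\Rightarrow$'' direction go through, since it is precisely what upgrades ``few vectors in an open halfspace'' into the existence of an index strictly on the positive side of $w$.
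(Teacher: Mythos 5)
Your proposal is correct and takes essentially the same approach as the paper: translate (almost-)neighborliness through the Gale face criterion and then convert the resulting positive-dependency conditions into halfspace counts via a theorem of the alternative. The only cosmetic difference is that the paper works with the full-interior criterion and a single appeal to Farkas' lemma where you use the relative-interior/convex-hull criteria paired with Stiemke and Gordan, which is why you additionally invoke $0\in\intr\conv M$ --- a fact that is automatic for any Gale transform, since its vectors sum to zero and span the ambient space.
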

\begin{proof}
 Using standard properties of the Gale transform characterizing faces~\cite[Cor.~5.6.3]{MatousekLODG}, $j$-neighborliness of $\Ppi$ means that every subset of $j$ points is the vertex set of a simplex face of $\Ppi$, which is equivalent to $0$ being in the  interior of the convex hull of every subset of $n-j$ points of~$M$. By Farkas' lemma, this is equivalent to every linear open halfspace containing at least one point of each subset of~$M$ of cardinality $n-j$, which is equivalent to the stated condition. For almost-neighborliness, we remove the condition of being in the interior, which translates to closed halfspaces instead.
\end{proof}

This provides an easy alternative proof of Lemma~\ref{lem:neighborlyworks}. We present it for convenience to the reader in order to provide extra insight into our later arguments.
\begin{proof}[Dual proof for Lemma~\ref{lem:neighborlyworks}]
If $i\geq 1$, let $I,J\subset [n]$ be disjoint subsets such that $|I| +|J|=n-i-1$, $|I|\leq k-1$, and $|J|\leq n-k-1$. And if $i=0$, let $I,J\subset [n]$ be disjoint subsets such $|I|= k$, and $|J|= n-k$. In both cases we have $|J|\geq n-i-k$. To prove our claim, it suffices to show that $0\in \intr \conv(\set{\tau(\bfm_j)}{j\in J})$. By Farkas' Lemma, this is equivalent to showing that every linear open halfspace $H^+$ contains at least one $\tau(\bfm_j)$ with $j\in J$. This holds because $|H^+\cap M|\geq k+i+1$ (by Lemma~\ref{lem:Galeneigh}) and $J$ is missing at most $i+k$ vectors from $M$.
\end{proof}

This viewpoint also allows us to provide another family of examples of $i$-preserving projections.

\begin{lemma}\label{lem:almostneighborlyworks}
If $1\leq k\leq \frac n2$ and $\Ppi$ is an $(n-2)$-dimensional configuration of $n$ points that is not $(k-i-1)$-almost neighborly, then 
$\pi: \RR^{n-1}\to \RR^{n-2}$ is an $i$-preserving projection of codimension~$1$ of~$\Hnk$.
\end{lemma}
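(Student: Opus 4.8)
The plan is to pass to the Gale dual picture, where codimension one makes everything one-dimensional. Since $\pi\colon\RR^{n-1}\to\RR^{n-2}$ has codimension one, the associated projection is $\tau\colon\RR^{n-1}\to\RR$, and by Lemma~\ref{lem:Gale} the $n$ reals $M=\set{\tau(\bfm_j)}{j\in[n]}$ form a one-dimensional Gale transform of $\Ppi$. First I would record the sign pattern of $M$: let $P$, $Z$, $N$ be the sets of indices where $\tau(\bfm_j)$ is positive, zero, or negative, with cardinalities $p$, $z$, $q$. The two closed linear halfspaces of $\RR$ are $\{x\ge 0\}$ and $\{x\le 0\}$, containing $p+z$ and $q+z$ vectors of $M$ respectively. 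By the contrapositive of the almost-neighborly direction of Lemma~\ref{lem:Galeneigh}, failing to be $(k-i-1)$-almost neighborly means one of these halfspaces contains at most $k-i-1$ vectors. Rescaling $\tau$ by $-1$ if necessary (which changes neither $\pi$ nor the conditions of Lemma~\ref{lem:configuration}, as these are invariant under scaling of $\tau$), I may assume $q+z\le k-i-1$, so that $p\ge n-k+i+1$.

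The combinatorial heart is then a pigeonhole count on the index sets $I,J$ describing an $i$-face. For $i=0$ these are complementary with $|I|=k$, $|J|=n-k$; for $i\ge 1$ they satisfy $|I|\le k-1$, $|J|\le n-k-1$, $|I|+|J|=n-i-1$. In either case the constraints force the uniform bounds $|I|\ge k-i$ and $|J|\ge n-k-i$. Combining with $p\ge n-k+i+1$ gives
\[
|I\cap P|\ge |I|+p-n\ge 1,\qquad |J\cap P|\ge |J|+p-n\ge n-2k+1\ge 1,
\]
where the last inequality uses the hypothesis $k\le\frac n2$. Thus both $I$ and $J$ meet $P$.

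Finally I would feed this into Lemma~\ref{lem:configuration}. The key observation is that a single index $a\in P$ simultaneously contributes a positive value $\tau(\bfm_a)$ and a negative value $\tau(\bfn_a)=-\tau(\bfm_a)$. Picking $a\in I\cap P$ and $b\in J\cap P$, in $\RR$ the set $\set{\tau(\bfm_j)}{j\in I}\cup\set{\tau(\bfn_j)}{j\in J}$ contains the positive value $\tau(\bfm_a)$ and the negative value $\tau(\bfn_b)$, so $0\in\intr\conv$ of it; symmetrically, $\set{\tau(\bfm_j)}{j\in J}\cup\set{\tau(\bfn_j)}{j\in I}$ contains $\tau(\bfm_b)>0$ and $\tau(\bfn_a)<0$. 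These are exactly conditions d) and e) (for $i=0$) and d') and e') (for $i\ge 1$), so $\pi$ is $i$-preserving.

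I expect the only delicate points to be bookkeeping rather than ideas: justifying the ``without loss of generality $q+z\le k-i-1$'' reduction cleanly via rescaling $\tau$, and verifying that the single uniform pair of bounds $|I|\ge k-i$, $|J|\ge n-k-i$ covers both the $i=0$ case (with $I,J$ complementary) and the $i\ge 1$ case at once. The genuinely useful insight — that one needs only the abundance of positive indices $P$ and never touches $N$, because each $j\in P$ supplies both signs through the antipodal pair $\tau(\bfm_j),\tau(\bfn_j)$ — makes the argument very short once the one-dimensional Gale reduction is in place.
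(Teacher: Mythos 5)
Your proof is correct and takes essentially the same route as the paper's: Gale duality via Lemmas~\ref{lem:Gale} and~\ref{lem:Galeneigh}, a without-loss-of-generality reduction so that one closed halfline of $\RR$ contains at most $k-i-1$ vectors of $M$, a pigeonhole count showing that both index sets $I$ and $J$ must meet the abundant sign class, and the antipodal pairing $\tau(\bfm_j)=-\tau(\bfn_j)$ to exhibit a positive and a negative value in each set required by Lemma~\ref{lem:configuration}. If anything, your write-up is slightly more careful than the paper's, which spells out only the $i\geq 1$ format of the index sets and contains a harmless sign slip when selecting elements of $M^-$.
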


\begin{proof}
Let $M^+:=\set{j\in[n]}{\tau(\bfn_j)> 0}$, $M^-:=\set{j\in[n]}{\tau(\bfn_j)< 0}$, and $M^0:=\set{j\in[n]}{\tau(\bfn_j)=0}$.
 
By Lemmas~\ref{lem:Gale} and~\ref{lem:Galeneigh}, $\Ppi$ not being $(k-i-1)$-almost neighborly is equivalent to 
	$\min\{|M^+\cup M^0|,|M^-\cup M^0|\}\leq k-i-1.$

We may assume that $|M^+\cup M^0|\leq |M^-\cup M^0|$, so $|M^+\cup M^0|\leq k-i-1$. We pick $I,J\subset[n]$ such that $I\cap J=\emptyset$, $|I|\leq k-1$, $|J|\leq n-k-1$ and $|I|+|J|=n-i-1$. Since $|J|\leq n-k-1$, we have that $|I|\geq k-i$. Since $n\geq 2k$, $|J|\geq n-k-i\geq k-i$. 

Therefore, we may choose $j\in I\cap M^-$ and $j'\in J\cap M^-$, which verify $\tau(\bfm_j)<0$ and $\tau(\bfm_{j'})<0$. So $\tau(\bfn_j)=-\tau(\bfm_j)>0$. Therefore, $0$ is in the desired interior of the convex hull.
\end{proof}

We can give an explicit description of the examples provided by Lemma~\ref{lem:almostneighborlyworks}. Full-dimensional configurations of $d+2$ points in~$\RR^d$ are well classified~\cite[Sec.~6,1]{Grunbaum}. They are all (up to admissible projective transformation) of the form $\pyr_k(\Delta_n\oplus\Delta_m)$, for $k,n,m\in\NN$ such that $k+n+m=d$. Here \defn{$\Delta_n$} represents (the vertex set of) an \defn{$n$-dimensional simplex}; the \defn{direct sum $P \oplus Q$} is the $\left(\dim(P)+\dim(Q)\right)$-dimensional configuration obtained by taking a copy of $P$ and $Q$ whose convex hulls intersect in a point in the relative interior of both; and the \defn{$k$-fold pyramid $\pyr_k(P)$} is the $\left(k+\dim(P)\right)$-dimensional configuration obtained by adding $k$ affinely independent points (so for $k=0$, we define $\pyr_0(P)=P$).
Note that $\pyr_k(\Delta_n\oplus\Delta_m)$ is $\min(n,m)$-neighborly (but not $(\min(n,m)+1)$-neighborly) and $(\min(n,m)+k)$-almost neighborly (but not $(\min(n,m)+k+1)$-almost neighborly). Hence $\Delta_{n-2}\oplus\Delta_0$, which consists of an $(n-2)$-simplex together with an interior point, corresponds to an $i$-preserving projection for $\Delta_{n,k}$ whenever $k\geq i+2$; whereas $\Delta_{\floor{\nicefrac n2}-1}\oplus\Delta_{\ceil{\nicefrac n2}-1}$ corresponds to an $i$-preserving projection if $k+i+1\leq \floor{\frac n2}$.

As it turns out, the configurations given by Lemmas~\ref{lem:neighborlyworks} and~\ref{lem:almostneighborlyworks} describe all possible $i$-preserving projections. We show this first
 for the cases of codimension~$1$ and then for those of larger codimension.

\begin{proposition}\label{prop:charcod1}
If $1\leq k\leq \frac n2$, the surjective projection $\pi:\RR^{n-1}\to \RR^{n-2}$ is $i$-preserving for $\Hnk$ if and only if either
\begin{enumerate}[(i)]
 \item\label{it:neigh} $\Ppi$ is $(k+i)$-neighborly, or
 \item\label{it:notalmostneigh} %
 $\Ppi$ is not $(k-i-1)$-almost neighborly.
\end{enumerate}

That is, if either
\begin{enumerate}[(i)]
 \item[\ref{it:neigh}] $\min\{|M^+|,|M^-|\}\geq k+i+1$, or
 \item[\ref{it:notalmostneigh}] %
 $\min\{|M^+\cup M^0|,|M^-\cup M^0|\}\leq k-i-1$;
\end{enumerate}
where 
\begin{align*}
	M^+&:=\set{j\in[n]}{\tau(\bfn_j)> 0},\\ 
	M^-&:=\set{j\in[n]}{\tau(\bfn_j)< 0} \text{and}\\
	M^0&:=\set{j\in[n]}{\tau(\bfn_j)=0}.
\end{align*}
\end{proposition}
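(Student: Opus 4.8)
The plan is to transport everything to the line via Gale duality and then reduce $i$-preservation to a purely combinatorial packing question about the sign pattern of the reals $t_j:=\tau(\bfn_j)$. Since $\pi$ has codimension $1$, its associated projection is $\tau:\RR^{n-1}\to\RR$, so each normal vector becomes a scalar, with $\tau(\bfm_j)=-t_j$. Put $a=|M^+|$, $b=|M^-|$, $c=|M^0|$, so $a+b+c=n$. By Lemma~\ref{lem:Gale} the configuration $M=\set{\tau(\bfm_j)}{1\le j\le n}$ is a Gale transform of $\Ppi$, and because $M\subset\RR$ the only open linear halfspaces are $\{x>0\}$ and $\{x<0\}$, containing $b$ and $a$ of its vectors respectively (the closed ones contain $b+c$ and $a+c$). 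Feeding this into Lemma~\ref{lem:Galeneigh} immediately gives the equivalence of the two formulations of \ref{it:neigh} and \ref{it:notalmostneigh}: $(k+i)$-neighborliness of $\Ppi$ is $\min\{a,b\}\ge k+i+1$, and failure of $(k-i-1)$-almost neighborliness is $\min\{a+c,b+c\}\le k-i-1$. It remains to show $\pi$ is $i$-preserving iff \ref{it:neigh} or \ref{it:notalmostneigh} holds.

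The ``if'' direction is already contained in the two earlier constructions. If \ref{it:neigh} holds then $\Ppi$ is $(k+i)$-neighborly and Lemma~\ref{lem:neighborlyworks} applies; if \ref{it:notalmostneigh} holds then, since $\pi$ is surjective, $\Ppi$ is an $(n-2)$-dimensional configuration of $n$ points (the $\pi(e_j)$ span $\RR^{n-2}$ linearly and sum to $0$, so their affine hull is the whole space) that is not $(k-i-1)$-almost neighborly, and Lemma~\ref{lem:almostneighborlyworks} applies.

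For the converse I would feed Lemma~\ref{lem:configuration} through $\tau$. Over $\RR$ the condition $0\in\intr\conv(S)$ just says that $S$ contains a strictly positive and a strictly negative value; applied to the multisets in d') and e') this shows that the $i$-face indexed by $(I,J)$ fails to be strictly preserved exactly when one sign class can be \emph{monochromatically split}, i.e.\ when $I\subseteq M^+\cup M^0$ and $J\subseteq M^-\cup M^0$ (or the same with $M^+,M^-$ interchanged). Thus $\pi$ is $i$-preserving iff no admissible index pair admits such a split. For $i\ge 1$ the admissible pairs are the disjoint $I,J$ with $|I|\le k-1$, $|J|\le n-k-1$, $|I|+|J|=n-i-1$, so the question becomes: how many indices can one place into a bin $I$ of capacity $k-1$ drawing from $M^+\cup M^0$ and a bin $J$ of capacity $n-k-1$ drawing from $M^-\cup M^0$? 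Three obvious upper bounds — the total capacity $n-2$, the bound $(k-1)+(b+c)$, and the bound $(n-k-1)+(a+c)$ — are simultaneously attained by a greedy assignment, so this maximum equals
\[
\mu_*:=\min\{\,n-2,\ (k-1)+(b+c),\ (n-k-1)+(a+c)\,\}.
\]
Hence a split of size $n-i-1$ exists iff $\mu_*\ge n-i-1$, i.e.\ (since $n-2\ge n-i-1$ for $i\ge1$) iff $a+c\le k-i-1$ or $b+c\le n-k-i-1$; the interchanged version yields $b+c\le k-i-1$ or $a+c\le n-k-i-1$. The case $i=0$ is identical except that the admissible pairs are the partitions of $[n]$ into parts of sizes $k$ and $n-k$, which leads to exactly the same four inequalities with $i=0$.

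Finally I would combine these. Writing $a+c=n-b$ and $b+c=n-a$, being $i$-preserving becomes
\[
[\,a\ge k+i+1\ \vee\ b\ge n-k+i+1\,]\ \wedge\ [\,b\ge k+i+1\ \vee\ a\ge n-k+i+1\,].
\]
As $n\ge 2k$ forces $n-k+i+1\ge k+i+1$, the clause $a\ge n-k+i+1$ implies $a\ge k+i+1$ (and symmetrically for $b$), so a one-line Boolean check turns this conjunction into $(a\ge k+i+1\wedge b\ge k+i+1)\ \vee\ (a\ge n-k+i+1\ \vee\ b\ge n-k+i+1)$, which is precisely \ref{it:neigh} $\vee$ \ref{it:notalmostneigh}. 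The main obstacle — the only step needing a genuine computation — is establishing the value $\mu_*$, i.e.\ checking that the three cut bounds are tight in every regime of $a,b,c$ (and carrying out the analogous exact-cardinality feasibility count for $i=0$); once that identity is in hand, the translation to neighborliness and the closing Boolean/arithmetic step, where $n\ge 2k$ is used, are routine.
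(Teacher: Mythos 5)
Your proof is correct, and its first half follows the paper exactly: the translation between the geometric and sign-pattern formulations via Lemmas~\ref{lem:Gale} and~\ref{lem:Galeneigh}, and the sufficiency of \ref{it:neigh} and \ref{it:notalmostneigh} via Lemmas~\ref{lem:neighborlyworks} and~\ref{lem:almostneighborlyworks} (your observation that surjectivity of $\pi$ makes $\Ppi$ a full, $(n-2)$-dimensional configuration is exactly the hypothesis that must be checked before Lemma~\ref{lem:almostneighborlyworks} applies). Where you genuinely diverge is the necessity direction. The paper argues by contraposition and explicit construction: assuming $\Ppi$ is not $(k+i)$-neighborly but is $(k-i-1)$-almost neighborly, it takes $|M^+|\geq|M^-|$ without loss of generality and exhibits a violating pair $(I,J)$ in two cases ($|M^+|$ large, versus $|M^+|$ small and padded by a set $Z\subseteq M^0$). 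You instead determine \emph{exactly} when a violating monochromatic pair exists, by computing the maximum of $|I|+|J|$ as the cut value $\mu_*$, and then reduce the resulting arithmetic characterization of $i$-preservation to \ref{it:neigh} $\vee$ \ref{it:notalmostneigh} by Boolean algebra using $n\geq 2k$. Your route costs the tightness verification for $\mu_*$ (which you correctly flag as the one real computation; it is a routine greedy/max-flow--min-cut check and does hold in every regime of $a,b,c$), and buys in exchange a closed-form criterion for $i$-preservation in terms of $(|M^+|,|M^-|,|M^0|)$ with no case split or WLOG, from which both implications of the proposition drop out at once; your final absorption step, that $a\geq n-k+i+1$ implies $a\geq k+i+1$ under $n\geq 2k$, is also correct.

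One sentence needs repair, though it is a slip rather than a gap: ``a split of size $n-i-1$ exists iff $\mu_*\geq n-i-1$, i.e.\ \dots\ iff $a+c\leq k-i-1$ or $b+c\leq n-k-i-1$'' inverts the logic. Unfolding $\mu_*\geq n-i-1$ gives $a+c\geq k-i$ \emph{and} $b+c\geq n-k-i$; the two inequalities you wrote characterize the \emph{non}-existence of a split. The remainder of your argument is consistent only with this corrected reading: the displayed conjunction you assert for ``$\pi$ is $i$-preserving'' is precisely the conjunction of the two non-existence conditions (rewritten via $a+c=n-b$ and $b+c=n-a$), so the conclusion stands once the sentence is fixed.
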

\begin{proof}
We have already seen that if $\Ppi$ verifies \ref{it:neigh} or \ref{it:notalmostneigh}, then $\pi$ is $i$-preserving.
Now we show that no other projection $\pi$ of codimension~$1$ can be $i$-preserving. 
If $S_\pi$ is not $(k+i)$-neighborly, then $\min\{|M^+|,|M^-|\}\leq k+i$. Without loss of generality, say $|M^+|\geq |M^-|$ and $|M^-|\leq k+i$.
If moreover $S_\pi$ is $(k-i-1)$-almost neighborly, then $|M^+\cup M^0|\geq k-i$ and $|M^- \cup M^0|\geq k-i$. 
We also have
\[|M^+\cup M^0|=n-|M^-|\geq n-k-i.\]

Assume first $i\geq 1$. Now,
\begin{itemize}
        \item If $|M^+|\geq n-k-1$, let $J$ be a subset of $M^+$ of size $n-k-1$ and $I$ be a subset of $M^-\cup M^0$ of size $k-i$.
        \item If $|M^+| \leq n-k-2$, let $Z\subseteq M^0$ be a minimal (maybe empty) set such that $$n-k-i\leq |M^+\cup Z| \leq n-k-1,$$
        define $J=M^+\cup Z$ and let $I$ be a subset of $[n]\setminus J\subseteq M^-\cup M^0$ of size $n-i-1-|J|$.
\end{itemize}

If $i=0$, 
\begin{itemize}
        \item If $|M^+|\geq n-k$, let $J$ be a subset of $M^+$ of size $n-k$ and $I$ be a subset of $M^-\cup M^0$ of size~$k$.
        \item If $|M^+| \leq n-k-1$, let $Z\subset M^0$ be a set such that $$n-k = |M^+\cup Z|,$$
        define $J=M^+\cup Z$ and let $I=[n]\setminus J$.
\end{itemize}

In both cases, we obtain disjoint subsets $I$, $J$ that satisfy 
\begin{itemize}
		\item $|I|\leq k-1$, $|J|\leq n-k-1$, $|I|+|J|=n-i-1$ when $i\geq 1$,
		\item $|I|= k$, $|J|= n-k$ when $i=0$, and
        \item $\tau(\bfm_j)\geq 0$ for $j\in I$ and $\tau(\bfn_j)\geq 0$ for $j\in J$.
\end{itemize}

Then $0$ is not in the interior of the convex hull of $$\{\tau(\bfm_j): j\in I\}\cup \{\tau(\bfn_j): j\in J\},$$ and thus $\pi$ is not $i$-preserving by Lemma~\ref{lem:configuration}.
\end{proof}

\begin{proposition}\label{prop:charcod2}
If $1\leq k\leq \frac n2$, the projection $\pi:\RR^{n-1}\to \RR^{n-3}$ is $i$-preserving for $\Hnk$ if and only if $\Ppi$ is $(k+i)$-neighborly.

That is, if for every linear open halfplane $H^+$ we have 
\begin{equation}\label{eq:condneigh}|H^+\cap M|\geq k+i+1.\end{equation}
\end{proposition}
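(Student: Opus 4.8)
The plan is to prove both implications with the Gale-dual dictionary of Lemmas~\ref{lem:Gale} and~\ref{lem:Galeneigh}. Since $\pi$ has codimension~$2$, the configuration $M=\set{\tau(\bfm_j)}{j\in[n]}$ lives in $\RR^2$; write $v_j:=\tau(\bfm_j)$, so that $\tau(\bfn_j)=-v_j$ by~\eqref{eq:symsum}, and recall that condition a) of Lemma~\ref{lem:configuration} says $0\in\intr\conv M$, i.e.\ $M$ positively spans $\RR^2$. The implication ``$(k+i)$-neighborly $\Rightarrow$ $i$-preserving'' is exactly Lemma~\ref{lem:neighborlyworks}. For the converse I would argue by contraposition: assuming $\Ppi$ is not $(k+i)$-neighborly, I will exhibit disjoint index sets $I,J$ of the cardinalities prescribed in Lemma~\ref{lem:configuration} for which condition d)/d') fails. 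The key translation is that, because $\tau(\bfn_j)=-v_j$, condition d)/d') fails for a pair $(I,J)$ precisely when the set $\set{v_j}{j\in I}\cup\set{-v_j}{j\in J}$ lies in a closed halfplane, i.e.\ when there is a direction $w$ with $\sprod{w}{v_j}\le 0$ for $j\in I$ and $\sprod{w}{v_j}\ge 0$ for $j\in J$.

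The engine is a rotating-direction argument in the plane. For a generic direction $w$ (orthogonal to no nonzero $v_j$) set $g(w):=|\set{j}{\sprod{w}{v_j}>0}|$ and let $P_w,N_w$ be the indices with $\sprod{w}{v_j}$ positive, resp.\ negative. As $w$ turns through a full circle, $g$ is piecewise constant and, in the generic situation where one vector crosses at a time, changes by $\pm1$ at each direction orthogonal to some $v_j$, so $g$ attains \emph{every} integer value between $\min_w g$ and $\max_w g$; moreover $g(w)+g(-w)=n$. By Lemma~\ref{lem:Galeneigh}, failure of $(k+i)$-neighborliness means some open halfplane contains at most $k+i$ vectors of $M$, that is $\min_w g(w)\le k+i$, whence $\max_w g(w)=n-\min_w g(w)\ge n-k-i$.

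Now I would match $g$ against the cardinality constraints. For a generic $w$ with $g(w)=p$, taking $I\subseteq N_w$ and $J\subseteq P_w$ automatically satisfies the sign conditions, so a failing pair of the required sizes $|I|\le k-1$, $|J|\le n-k-1$, $|I|+|J|=n-i-1$ exists as soon as $\min(k-1,n-p)+\min(n-k-1,p)\ge n-i-1$; an elementary computation shows this holds exactly for $p$ in the window $[\,n-k-i,\,n-k+i\,]$. It remains to check that $g$ realizes a value of this window, and indeed the interval of attained values $[\min_w g,\max_w g]$ has left endpoint $\le k+i\le n-k+i$ (using $k\le\tfrac n2$) and right endpoint $\ge n-k-i$, hence it meets $[\,n-k-i,\,n-k+i\,]$; picking an integer $p^\ast$ in the intersection with its direction $w^\ast$ produces the desired $I,J$, making $\set{v_j}{j\in I}\cup\set{-v_j}{j\in J}$ lie strictly in the open halfplane $\set{x}{\sprod{w^\ast}{x}<0}$, so d)/d') fails and $\pi$ is not $i$-preserving. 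The case $i=0$ is the degenerate endpoint of this argument: there $|I|+|J|=n$ forces $I=N_w$ and $J=P_w$, so one only needs a generic direction with $g(w)=n-k$ exactly, which is attained since $n-k\in[\min_w g,\max_w g]$ because $\min_w g\le k\le n-k$.

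The step I expect to be the main obstacle is the careful bookkeeping of cardinalities in the degenerate regimes, where two simplifying assumptions above can fail. First, when several $v_j$ point along a common ray, $g$ jumps by more than one and may skip the target value; second, when some $v_j=0$ the antipodal identity becomes $g(w)+g(-w)=n-z$ and the counting of $N_w,P_w$ changes. I would absorb both phenomena into an indifferent set $Z_w$ consisting of the vanishing vectors together with the vectors on the boundary line orthogonal to $w$: at a threshold direction the tied vectors lie in $Z_w$ and may be freely distributed between $I$ and $J$, which restores the intermediate cardinalities lost to a large jump and only enlarges the family of admissible pairs $(I,J)$. Making this robust — so that for every admissible triple $(n,k,i)$ with $n\ge 2k$ a suitable direction $w$ and a suitable split of $Z_w$ always coexist, including when the window $[\,n-k-i,\,n-k+i\,]$ is pushed against the range of $g$ for $n$ close to $2k$ — is the crux of the argument, after which the conclusion follows verbatim from Lemma~\ref{lem:configuration}.
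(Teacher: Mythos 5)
Your generic-case argument is sound, and it is in essence a contrapositive version of the paper's own rotating-line proof; but the part you defer as ``the crux'' is a genuine gap, not a robustness check that can be patched by routine bookkeeping. Two reasons. First, you cannot perturb the degeneracies away: by Lemma~\ref{lem:Gale}, $M$ is the Gale transform of the \emph{given} projection $\pi$, so zero vectors and collinear vectors in $M$ are fixed data (a zero vector of $M$ corresponds to a pyramidal point of $\Ppi$), and perturbing $M$ changes both the hypothesis and the conclusion. Second, and more seriously, your central counting device --- ``a failing pair exists once some direction has $g(w)$ in the window $[\,n-k-i,\,n-k+i\,]$, and such a direction exists by the intermediate-value argument'' --- breaks down precisely in the degenerate regime, because the window itself is the wrong criterion when $Z_w\neq\emptyset$. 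Concretely, take $n=10$, $k=5$, $i=1$, and let $M$ consist of three antipodal pairs $\pm u_1,\pm u_2,\pm u_3$ in distinct directions together with four zero vectors (this sums to zero, so it is a legitimate Gale transform). This $\Ppi$ is not $6$-neighborly, so your contrapositive must produce a failing pair $(I,J)$; but $g(w)=3$ for every generic direction and $g(w)=2$ at the thresholds, so \emph{no} direction ever enters the window $[4,6]$ and your selection of $p^\ast$ is impossible. A failing pair does exist --- take $I=N_w$ plus one zero vector and $J=P_w$ plus another, giving $|I|=|J|=4$ --- but to find it one must replace the window condition by the existence of a split $z_1+z_2\le |Z_w|$ with $\min(k-1,\,|N_w|+z_1)+\min(n-k-1,\,|P_w|+z_2)\ge n-i-1$, and then prove that a direction satisfying \emph{this} condition always exists. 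That existence proof is exactly what your sketch postpones, and it is not a matter of ``restoring intermediate values lost to a jump'': in the example above the whole window has moved, not just been skipped.

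By contrast, the paper's proof is organized so that degeneracies never need separate treatment: for each oriented line it records the five-fold partition $A,B,C,D,E$ (strictly right, strictly left, positive ray, negative ray, zero vectors), applies the codimension-one dichotomy of Proposition~\ref{prop:charcod1} to every line, exhibits one balanced line where the non-neighborly alternative contradicts itself, and then propagates the neighborly alternative to all lines spanned by $M$ by induction on consecutive lines --- the sets $C,D,E$ ride along in the counting throughout. If you want to keep your intermediate-value formulation, the repair is to rotate the corrected quantity $\max_{z_1+z_2\le|Z_w|}\bigl[\min(k-1,|N_w|+z_1)+\min(n-k-1,|P_w|+z_2)\bigr]$ instead of $g$, and to analyze its one-sided limits at threshold directions, where the tied vectors all move into $Z_w$; carried out in full, this essentially reproduces the paper's bookkeeping.
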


\begin{proof}
Let $M=\tau(\set{\bfm_j}{j\in[n]})\subset \RR^2$ and $N=\tau(\set{\bfn_j}{j\in[n]}) \subset \RR^2$. By construction, $M\cup N$ is a centrally symmetric $2$-dimensional vector configuration. And $\pi$ is $i$-preserving if and only if some prescribed positive dependencies are verified, by Lemma~\ref{lem:configuration}.
We will show that these dependencies are equivalent to condition~\eqref{eq:condneigh}.

The fact that~\eqref{eq:condneigh} implies that $\pi$ is $i$-preserving is a direct corollary of Lemmas~\ref{lem:neighborlyworks} and~\ref{lem:Galeneigh}. 
Hence, it suffices to prove the converse, that is, that if $\pi$ is $i$-preserving then~\eqref{eq:condneigh} holds. Note that it is sufficient to show that we have~\eqref{eq:condneigh} for the halfplanes supported by lines spanned by the vectors in~$M$.

Consider an oriented line $\ell$ through the origin. 
Let $A\subset [n]$ and $B\subset [n]$ be the indices of the vectors of $M$ strictly to the right and left of $\ell$ respectively. Let $C\subset [n]$ index the vectors of $M$ on the open ray from $0$ with the same direction as $\ell$ and $D\subset [n]$ the vectors of $M$ on the opposite ray (see Figure~\ref{fig:rotatingline}). Then $E=[n]\setminus (A\cup B\cup C\cup D)$ are the indices of the vectors that are copies of $0$. Let $a=|A|$, $b=|B|$, $c=|C|$, $d=|D|$, and $e=|E|$.

The same arguments of the proof of Proposition~\ref{prop:charcod1} show that we must have either 
\begin{enumerate}[(i)]
 \item[\ref{it:neigh}] $\min\{a,b\}\geq k+i+1$, or
 \item[\ref{it:notalmostneigh}] %
 $\min\{a+c+d+e,b+c+d+e\}\leq k-i-1$.
\end{enumerate}
Our claim is that it is always the first condition that holds. 

We continuously rotate $\ell$ clockwise. The sign of $a-b$ changes after half a complete rotation. This ensures that we can find a position of the line for which $a+c\geq b+d$, $b+c\geq a+d$ and $c+d\neq 0$, as the switch has to take place at one of the lines spanned by a vector of $M$.

\begin{figure}[htpb]
\includegraphics[width=.3\linewidth]{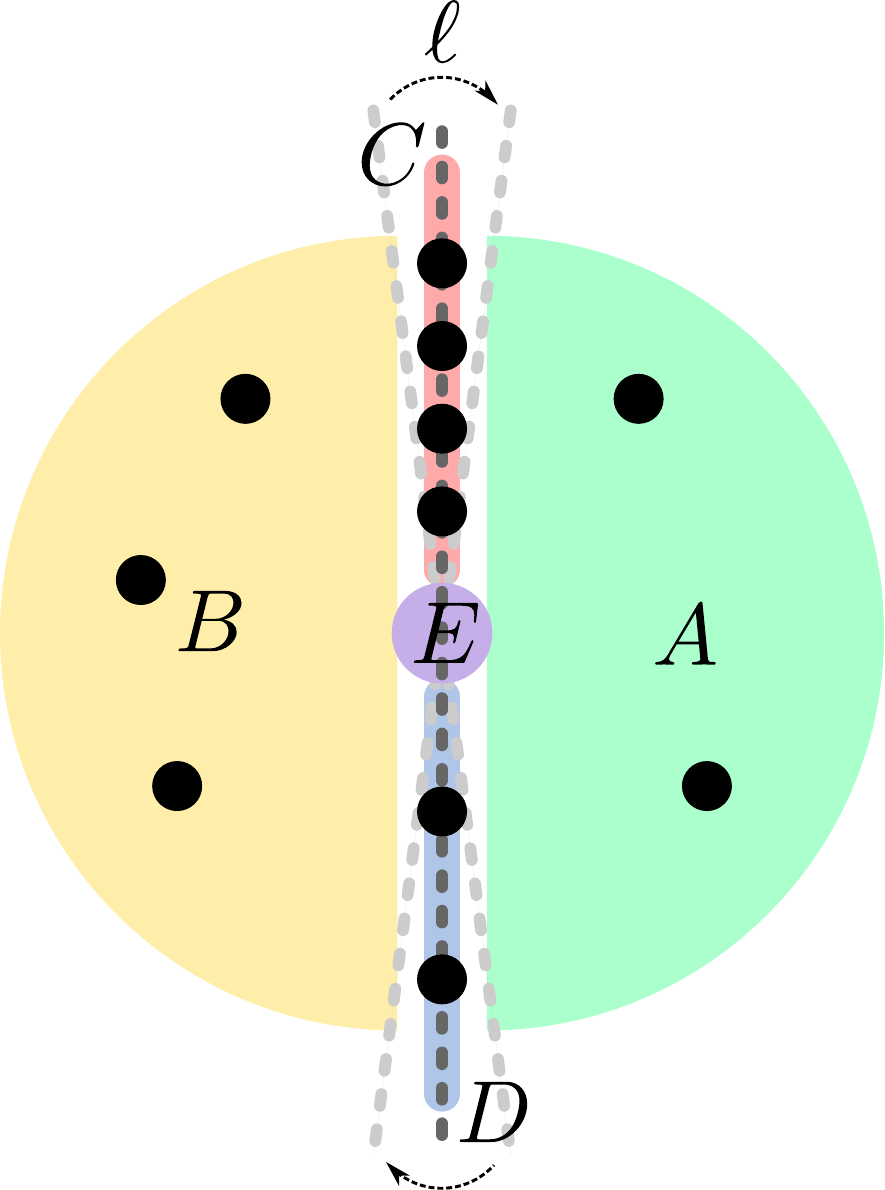}
\caption{The partition $A,B,C,D,E$ induced by an oriented line. In this example, $a=2$, $b=3$, $c=4$, $d=2$, and $e=0$, and we have $a+c=6\geq 5=b+d$, $b+c =7\geq 4=a+d$ and $c+d=6\neq 0$.}\label{fig:rotatingline}
\end{figure}

We claim that \ref{it:neigh} holds. Indeed, if $a+c+d+e\leq k-i-1$, then $b=n-(a+c+d+e)\geq 2k - (k-i-1)=k+i+1$. Hence, we have $b+d\geq k+i+1\geq k-i-1\geq a+c$, a contradiction. And analogously, if $b+c+d+e\leq k-i-1$, then $a+d\geq k+i+1\geq k-i-1\geq b+c$.

We conclude that there is at least one line for which \ref{it:neigh} holds. Now, assume that $\ell$ is a line for which \ref{it:neigh} holds, and let $\ell'$ be the next  line spanned by $M$ in clockwise order. It defines a new partition, with corresponding values $a',b',c',d',e'$. We have $a'=a-c'+d$, $b'=b-d'+c$ and $e'=e$. 
Thus,
\begin{align*}
a'+c'+d'+e'&=a+d+d'+e'\geq a\geq k+i+1> k-i-1, \text{ and }\\ b'+c'+d'+e'&=b+c+c'+e'\geq b\geq k+i+1> k-i-1.
\end{align*}
Which shows that \ref{it:notalmostneigh} cannot hold. And hence that \ref{it:neigh} must hold for $\ell'$, and, by induction, for all lines spanned by $M$. 
\end{proof}

\begin{cor}\label{cor:characterization}
If $1\leq k\leq \frac n2$ and $\ell\geq 2$, the projection $\pi:\RR^{n-1}\to \RR^{n-1-\ell}$ is $i$-preserving for $\Hnk$ if and only if $\Ppi$ is $(k+i)$-neighborly.

That is, if for every linear open halfspace $H^+$ we have 
\begin{equation}\tag{\ref{eq:condneigh}}|H^+\cap M|\geq k+i+1.\end{equation}

\end{cor}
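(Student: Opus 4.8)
The plan is to prove both implications, reducing the harder one to the already-established codimension-two case of Proposition~\ref{prop:charcod2}. The easy direction, that $(k+i)$-neighborliness of $\Ppi$ forces $\pi$ to be $i$-preserving, holds for every codimension: it is exactly Lemma~\ref{lem:neighborlyworks}, combined with the Gale-dual reformulation of neighborliness in Lemma~\ref{lem:Galeneigh}. So the whole content is the converse: assuming $\pi:\RR^{n-1}\to\RR^{n-1-\ell}$ with $\ell\geq 2$ is $i$-preserving, I must show that every open linear halfspace of $\RR^{\ell}$ contains at least $k+i+1$ of the Gale vectors $M=\set{\tau(\bfm_j)}{j\in[n]}$.

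The key mechanism is that $i$-preservation is monotone under further projection. Given any linear surjection $Q:\RR^{\ell}\to\RR^{2}$, I would first realize it by a genuine codimension-two projection $\rho:\RR^{n-1}\to\RR^{n-3}$ through which $\pi$ factors: choosing a two-dimensional subspace of $\ker\pi$ as $\ker\rho$ makes $\rho$ a quotient with $\pi=\pi_1\circ\rho$, and dualizing the inclusion $\ker\rho\hookrightarrow\ker\pi$ shows that the associated projection satisfies $\tau_\rho=Q\circ\tau$; conversely, taking annihilators shows every such $Q$ arises this way. I would then check that $\rho$ inherits $i$-preservation from $\pi$. This is the linear-algebra fact that a surjective linear map sends interior points of a full-dimensional convex set to interior points of its image, so each positive dependency $0\in\intr\conv(\dots)$ supplied by Lemma~\ref{lem:configuration} for $\tau$ is carried by $Q$ to the corresponding dependency for $\tau_\rho=Q\circ\tau$. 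Since the admissible index sets $I,J$ in Lemma~\ref{lem:configuration} depend only on $n,k,i$ and not on the projection, this verifies every required dependency for $\rho$, so $\rho$ is $i$-preserving.

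With $\rho$ in hand, Proposition~\ref{prop:charcod2} applies as a black box: the configuration $\PC{\rho}$, whose Gale transform is $Q(M)$ by Lemma~\ref{lem:Gale}, is $(k+i)$-neighborly, so every open halfplane of $\RR^{2}$ contains at least $k+i+1$ vectors of $Q(M)$. The final step is a lifting argument. Suppose, for contradiction, that some open halfspace $\set{x}{\sprod{c}{x}>0}$ of $\RR^{\ell}$ contained at most $k+i$ vectors of $M$. I would extend $c$ to a surjection $Q=(c,c_2):\RR^{\ell}\to\RR^{2}$ with $c_2$ linearly independent of $c$, which is possible since $\ell\geq 2$; then the open halfplane $\set{(y_1,y_2)}{y_1>0}$ meets $Q(M)$ in exactly the images of those at most $k+i$ vectors, contradicting the conclusion just obtained for $\rho$. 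Hence $M$ satisfies condition~\eqref{eq:condneigh} for every halfspace, which by Lemma~\ref{lem:Galeneigh} is precisely the $(k+i)$-neighborliness of $\Ppi$.

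The step I expect to be the main obstacle is the bookkeeping that legitimizes citing Proposition~\ref{prop:charcod2}, namely realizing an arbitrary planar surjection $Q$ as a genuine codimension-two projection $\rho$ and confirming the identity $\tau_\rho=Q\circ\tau$. If that identification turns out to be awkward, the fallback is to bypass realizability entirely: the proof of Proposition~\ref{prop:charcod2} uses only the central symmetry $\tau(\bfn_j)=-\tau(\bfm_j)$ and the positive dependencies of Lemma~\ref{lem:configuration}, both of which the abstract centrally symmetric configuration $Q(M)\cup(-Q(M))$ inherits under $Q$, so one can instead run its rotating-line argument directly on $Q(M)$ and then lift as above.
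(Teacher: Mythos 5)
Your proof is correct, but it takes a genuinely different route from the paper's. Both arguments get the forward implication from Lemma~\ref{lem:neighborlyworks}; for the converse, however, the paper does not use Proposition~\ref{prop:charcod2} as a black box. Instead it extends that proposition's proof ``almost verbatim'': it fixes a codimension-two flat $F$ spanned by vectors of $M$ and pivots a hyperplane containing $F$ clockwise around it, re-running the rotating-line argument inside each pencil of hyperplanes through such a flat. You instead formalize a reduction to the codimension-two statement: factor $\pi$ through $\rho:\RR^{n-1}\to\RR^{n-3}$ with $\ker\rho\subset\ker\pi$, observe that $\tau_\rho=Q\circ\tau$ (up to a harmless linear isomorphism of $\RR^2$) and that the positive dependencies of Lemma~\ref{lem:configuration} push forward under the open surjection $Q$, so $\rho$ inherits $i$-preservation; then Proposition~\ref{prop:charcod2} applied to $\PC{\rho}$, whose Gale transform is $Q(M)$ by Lemma~\ref{lem:Gale}, forces every open halfplane to contain at least $k+i+1$ vectors of $Q(M)$, and any violating halfspace of $\RR^{\ell}$ would project to a violating halfplane. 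The underlying geometry is the same---pivoting around $F$ is precisely the rotating-line argument in the quotient of $\RR^{\ell}$ by $F$, i.e., your $Q$ with kernel $F$---but your organization buys a clean black-box invocation of the codimension-two case plus a reusable monotonicity principle (strict preservation passes to intermediate projections), at the cost of the factorization and annihilator bookkeeping; the paper's version needs none of that machinery but is really a proof sketch asking the reader to re-trace the rotation in each pencil. Your fallback paragraph (running the rotation directly on the projected centrally symmetric configuration) is in substance what the paper actually does.
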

\begin{proof}
Actually, the proof of Proposition~\ref{prop:charcod2} extends to the general case almost verbatim. The direct implication follows from Lemma~\ref{lem:neighborlyworks}.
For the converse, let $F$ be a $(d-2)$-dimensional flat spanned by vectors in $M\cap H$. Then we can repeat the argument of the proof of Proposition~\ref{prop:charcod2} by pivoting a hyperplane~$H$ containing~$F$ clockwise around $F$ to conclude that all hyperplanes containing~$F$ verify~\eqref{eq:condneigh}.
\end{proof}

\begin{theorem}\label{thm:fullcharacterization}
 If $1\leq k\leq \frac n2$ and $0\leq i\leq n-1$, the projection $\pi:\RR^{n-1}\to \RR^{d}$ is $i$-preserving for $\Hnk$ if and only if 
 \begin{enumerate}[(i)]
  \item $\Ppi$ is $(k+i)$-neighborly,
  \item\label{it:almostneigh} $\Ppi$ is $(n-2)$-dimensional and not $(k-i-1)$-almost neighborly, or
  \item $\pi$ is an affine isomorphism.
 \end{enumerate}

 In particular, such a projection exists only if
 \begin{enumerate}[(i)]
  \item $2k+2i\leq d\leq n-1$ (or $d=n-1\geq k+i$),
  \item $d=n-2$ and $k\geq i+2$, or
  \item $d=n-1$.
 \end{enumerate}
\end{theorem}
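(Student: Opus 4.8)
The plan is to assemble Theorem~\ref{thm:fullcharacterization} from the codimension-by-codimension analysis already in place and then to read off the dimensional windows. Write $\ell=n-1-d$ for the codimension of~$\pi$. For the \emph{if} direction each sufficient condition has essentially been treated: condition~(i) yields an $i$-preserving projection by Lemma~\ref{lem:neighborlyworks}, condition~(ii) by Lemma~\ref{lem:almostneighborlyworks}, and condition~(iii) holds because an affine isomorphism maps every face bijectively onto a combinatorially isomorphic face, so the whole $i$-skeleton is strictly preserved.

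For the \emph{only if} direction I would split on~$\ell$. Since $\pi$ is a surjective linear map $\RR^{n-1}\to\RR^d$, the images $\pi(e_i)$ of the (affinely spanning) standard generators affinely span~$\RR^d$, so $\Ppi$ is always $d$-dimensional; in particular $d\leq n-1$. When $\ell=0$ we have $d=n-1$ and $\pi$ is a linear bijection, hence an affine isomorphism, which is~(iii). When $\ell=1$ we have $d=n-2$, so $\Ppi$ is $(n-2)$-dimensional and Proposition~\ref{prop:charcod1} applies verbatim, giving exactly~(i) or~(ii). When $\ell\geq 2$ we have $d\leq n-3$ and Corollary~\ref{cor:characterization} forces $\Ppi$ to be $(k+i)$-neighborly, i.e.~(i). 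This exhausts all cases and establishes the characterization.

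For the \emph{In particular} clause I would translate each condition into an admissible range of~$d$. Condition~(iii) occurs only at $d=n-1$. Condition~(ii) requires $\Ppi$ to be $(n-2)$-dimensional, so $d=n-2$; using the classification of configurations of $d+2$ points as $\pyr_m(\Delta_a\oplus\Delta_b)$ and the fact (recalled after Lemma~\ref{lem:almostneighborlyworks}) that their almost-neighborliness equals $\min(a,b)+m$, the minimum achievable value is~$0$, realized by $\Delta_{n-2}\oplus\Delta_0$, so a non-$(k-i-1)$-almost neighborly example exists precisely when $k-i-1\geq 1$, that is $k\geq i+2$. For condition~(i), if $\Ppi$ has $n\geq d+2$ points, so is not a simplex, then the classical bound that a non-simplicial $d$-polytope is at most $\floor{d/2}$-neighborly (the linear van Kampen--Flores theorem, \cite[Thm.~7.1.4]{Grunbaum}) forces $k+i\leq\floor{d/2}$, hence $d\geq 2k+2i$; conversely the cyclic polytopes $C(n,d)$ are $\floor{d/2}$-neighborly and realize $(k+i)$-neighborliness for every $d$ in the range $2k+2i\leq d\leq n-2$, while at $d=n-1$ the configuration is forced to be a simplex, which is $(k+i)$-neighborly exactly when $k+i\leq n-1$. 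Collecting these windows produces precisely the asserted~(i), (ii), (iii).

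The main obstacle, and the only place requiring more than bookkeeping, is the sharp translation of condition~(i) into the lower bound $d\geq 2k+2i$: one must carefully separate the simplex case ($n=d+1$, where $(k+i)$-neighborliness needs only $k+i\leq n-1$ and overlaps with~(iii)) from the generic case ($n\geq d+2$, where the van Kampen--Flores bound bites), and confirm via cyclic polytopes that the bound is attained throughout the range, so that the window $2k+2i\leq d\leq n-1$ is exactly right rather than merely necessary.
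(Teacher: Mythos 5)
Your proposal is correct and takes essentially the same route as the paper: the characterization is assembled codimension by codimension from Lemma~\ref{lem:neighborlyworks}, Lemma~\ref{lem:almostneighborlyworks}, Proposition~\ref{prop:charcod1} and Corollary~\ref{cor:characterization}, and the dimensional windows follow from cyclic polytopes, Gr\"unbaum's bound that no $d$-polytope other than the simplex is more than $\floor{\frac{d}{2}}$-neighborly, and the simplex-plus-barycenter configuration for the non-almost-neighborly case. The paper's own proof is simply a terser version of this same assembly, leaving the case split on codimension implicit.
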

\begin{proof}
 If $\pi$ is an isomorphism, then $d=n-1$ and all the faces are preserved. In this case $\Ppi$ is just the vertex set of an $(n-1)$-simplex. 
 
 Every point configuration is at least $0$-almost neighborly, and hence the construction for~\ref{it:almostneigh} only makes sense if $k\geq i+2$. In this case, the configuration of $n$ points in $\RR^{n-2}$ consisting of the vertex set of an $(n-2)$-simplex and its barycenter provide an example of a configuration that is not $1$-almost neighborly. 

 Finally, it is well known that there are $j$-neighborly polytopes in all dimensions $d \geq 2j$. For example the cyclic polytopes which are the convex hulls of~$n$ points  in the \defn{moment curve} $\set{(t,t^2,\ldots,t^{d})}{t\in \RR}\subset\mathbb{R}^{d}$ are $\floor{\frac d2}$-neighborly~\cite[Cor.~0.8]{ZieglerBOOK}. It is also well known that no $d$-polytope other than the simplex is $\ell$-neighborly for $\ell>\floor{\frac{d}{2}}$~\cite[Exercice~0.10]{ZieglerBOOK}.
\end{proof}

Theorem~\ref{thm:skeleton} follows directly from this.

\begin{proof}[Proof of Theorem~\ref{thm:skeleton}]
If $n\geq 2k+2i+2$, then $2k+2i\leq n-2$ and an optimal embedding is in a $(k+i)$-neighborly polytope in~$\RR^{2k+2i}$. If $2k\leq n\leq 2k+2i+1$, then there is no neighborly embedding except for the isomorphism, and hence whether $d(n,k,i)=n-2$ or $d(n,k,i)=n-1$ depends on whether a non-almost neighborly embedding exists. This is determined by the condition $k\geq i+2$. That is if $k\notin A_{n,i}$, then the simplex with an interior point works, otherwise the only embedding is the isomorphism.
The remaining cases follow by symmetry.
\end{proof}

We want to end this section by thanking an anonymous reviewer who pointed us to an alternative proof of Theorem~\ref{thm:main} via a result of Shephard from~\cite{Shephard1969}. It states that every configuration of $d+3$ points in $\RR^d$ has a Radon partition into disjoint subsets of sizes $r$ and $s=d+3-r$ whose convex hulls intersect, for any $\floor{\frac{d+2}{2}}\leq r\leq\ceil{\frac{d+4}{2}}$. This shows that, when $n\geq 2k$, if $d<\min(2k,n-2)$, then there will be a $k$-element subset that cannot be a $k$-set. Our Proposition~\ref{prop:charcod2} can be seen as a generalization of Shephard's result, which follows from the $i=0$ special case. 

\section{Strong convex embeddings}
\label{sec:strongce}

Halman, Onn and Rothblum \cite{HalmanOnnRothblum2007} also define the following notions. A \defn{strong convex embedding} for a hypergraph $H=(V,E)$ is a convex embedding $f:V\to \RR^d$ for which $f(V)$ is also in convex position; and the \defn{strong convex dimension $\scd(H)$} of $H$ is the minimal $d$ for which a strong convex embedding of $H$ into $\RR^d$ exists.

Strong convex embeddings have some intrinsic interest (and the associated extremal problems have a different behavior, see Section~\ref{sec:open}). They also yield useful information on (normal) convex embeddings. For example, Swanepoel and Valtr~\cite{SwanepoelValtr2010} use the non-existence of a strong convex embedding for $K_5$ into $\RR^3$ as a key step in proving that there is no (normal) convex embedding for $K_{2,2,2,2,2}$ into~$\RR^3$.

Here we exploit our characterization of $i$-preserving projections for $\Delta_{n,k}$ to determine $\scd(\Knk)$. In fact, we determine the value of \defn{$d'(n,k,i)$}, the smallest dimension for which we can find a projection $\pi:\Hnk\to \RR^d$ that strictly preserves the $i$-dimensional skeleton of $\Hnk$ and for which the associated point configuration $S_\pi$ is in convex position.

\begin{cor}
	\label{cor:strong}
	Given positive integers $n$, $k$, $i$ such that $1\leq k\leq n-1$ and $0\leq i\leq n-1$, the value of $d'(n,k,i)$ is determined as follows.
		\[
		d'(n,k,i)=
			\begin{cases}
				2k+2i &\text{if $n\geq 2k+2i+2$},\\
				2n-2k+2i &\text{if $n\leq 2k-2i-2$},\\
                                n-1 &\text{if $2k-2i-1\leq n \leq 2k+2i+1$, $k\in C_{n,i}$},\\
				n-2 &\text{if $2k-2i-1\leq n \leq 2k+2i+1$, $k\notin C_{n,i}$}.\\
			\end{cases}
		\]
	Where
	\begin{align*}
		C_{n,i} &= \{1,2,\ldots,i+2\}\cup \{n-i-2, n-i-1,\ldots,n-1\}.\\
	\end{align*}
\end{cor}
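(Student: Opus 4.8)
The plan is to treat Corollary~\ref{cor:strong} as the convex-position refinement of Theorem~\ref{thm:fullcharacterization}, in exactly the same way that Theorem~\ref{thm:skeleton} was deduced from that characterization. First I would reduce to $k\leq \frac n2$. The condition that $\Ppi=\set{\pi(e_j)}{j\in[n]}$ be in convex position depends only on~$\pi$ and not on~$k$, while Corollary~\ref{cor:symmetry} says that $\pi$ is $i$-preserving for $\Hnk$ if and only if it is $i$-preserving for $\Delta_{n,n-k}$. Hence the set of admissible \emph{strong} $i$-preserving projections is the same for $(n,k,i)$ and $(n,n-k,i)$, so $d'(n,k,i)=d'(n,n-k,i)$ and I may assume $n\geq 2k$. (Alternatively one can invoke the homothety argument following Corollary~\ref{cor:symmetry}, noting that it leaves $f(V)$ unchanged.)

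The key observation is that asking $\Ppi$ to be in convex position is precisely asking it to be $1$-neighborly, so I would simply intersect the trichotomy of Theorem~\ref{thm:fullcharacterization} with this extra condition. In case (i), where $\Ppi$ is $(k+i)$-neighborly, convex position is automatic since $k+i\geq 1$; in case (iii), where $\pi$ is an affine isomorphism, $\Ppi$ is the vertex set of a simplex and is again in convex position. Thus the only branch that is genuinely cut down is case~(ii), the codimension-$1$ examples where $\Ppi$ is $(n-2)$-dimensional and not $(k-i-1)$-almost neighborly.

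I would then reanalyze case~(ii) under the convex-position constraint using the classification of full-dimensional configurations of $n$ points in $\RR^{n-2}$ as $\pyr_\ell(\Delta_a\oplus\Delta_b)$ with $\ell+a+b=n-2$, recalled before Theorem~\ref{thm:fullcharacterization}. Such a configuration is in convex position exactly when $\min(a,b)\geq 1$ (when $\min(a,b)=0$ the apex of $\Delta_0$ is an interior point, as in the simplex-with-barycenter example $\Delta_{n-2}\oplus\Delta_0$ used for Theorem~\ref{thm:skeleton}), and it fails to be $(k-i-1)$-almost neighborly exactly when $\min(a,b)+\ell\leq k-i-2$. Minimizing $\min(a,b)+\ell$ over configurations in convex position gives the value~$1$, attained by $\Delta_1\oplus\Delta_{n-3}$, so a codimension-$1$ strong $i$-preserving projection exists if and only if $1\leq k-i-2$, that is $k\geq i+3$. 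This threshold is exactly one larger than the threshold $k\geq i+2$ for ordinary $i$-preserving projections, the difference being precisely that the interior-point configuration $\Delta_{n-2}\oplus\Delta_0$ is no longer admissible.

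Finally I would assemble the values and symmetrize. For $n\geq 2k+2i+2$ a $(k+i)$-neighborly cyclic polytope in $\RR^{2k+2i}$ works and is in convex position, giving $d'=2k+2i$. For $2k\leq n\leq 2k+2i+1$ no $(k+i)$-neighborly configuration of dimension below $n-1$ can fit (it would force $2k+2i\leq n-2$), so the only options are the codimension-$1$ example, available iff $k\geq i+3$, and the isomorphism; this yields $d'=n-2$ when $k\geq i+3$ and $d'=n-1$ otherwise. Shifting the threshold from $k\geq i+2$ to $k\geq i+3$ replaces $A_{n,i}$ by $C_{n,i}=A_{n,i}\cup\{i+2,\,n-i-2\}$, and applying the symmetry $k\leftrightarrow n-k$ produces the $2n-2k+2i$ branch together with the full range $2k-2i-1\leq n\leq 2k+2i+1$ in the statement. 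The main obstacle I anticipate is the bookkeeping inside case~(ii): pinning down that convex position is equivalent to $\min(a,b)\geq 1$, identifying $\Delta_1\oplus\Delta_{n-3}$ as the extremal admissible configuration, and verifying that after symmetrization the condition $k\geq i+3$ together with its mirror is encoded exactly by $k\notin C_{n,i}$ on the entire middle range (which uses that for $k\leq \frac n2$ the upper block $\{n-i-2,\dots,n-1\}$ contributes nothing new).
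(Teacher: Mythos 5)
Your proposal is correct and follows essentially the same route as the paper's proof: reduce to $k\leq \frac n2$ by symmetry, observe that the neighborly and isomorphism branches of Theorem~\ref{thm:fullcharacterization} are automatically in convex position, and settle the middle range $2k\leq n\leq 2k+2i+1$ by showing a strong codimension-$1$ projection exists exactly when $k\geq i+3$, with the same witness $\Delta_1\oplus\Delta_{n-3}$. The only cosmetic difference is that the paper rules out $k=i+2$ directly (non-$1$-almost neighborly forces an interior point, contradicting convex position) rather than via minimizing $\min(a,b)+\ell$ over the classification $\pyr_\ell(\Delta_a\oplus\Delta_b)$, but this is the same underlying fact.
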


Note that $C_{n,i}=A_{n,i}\cup \{i+2, n-i-2\}$, so we are saying that $d'(n,k,i)=d(n,k,i)$ except for the cases $2k-2i-1\leq n \leq 2k+2i+1$ and $k\in \{i+2, n-i-2\}$, in which $d'(n,k,i)=d(n,k,i)+1=n-1$. Thus, $d$ and $d'$ have essentially the same behavior, except from a very specific case.

\begin{proof}
	If $n\geq 2k+2i+2$, then $2k+2i\leq n-2$ and an optimal embedding is in a $(k+i)$-neighborly polytope in~$\RR^{2k+2i}$, whose vertices are in convex position.
	
	If $2k\leq n\leq 2k+2i+1$, then there is no $(k+i)$-neighborly embedding except for the isomorphism, and hence whether $d'(n,k,i)=n-2$ or $d'(n,k,i)=n-1$ depends on whether a non $(k-i-1)$-almost neighborly embedding exists. If $k=i+2$, then the embedding must be non $1$-almost neighborly, so the vertices cannot be in convex position, hence in this case $d'(n,k,i)=n-1$. Finally, if $k\geq i+3$, then the projection with point configuration $\Delta_{n-3}\oplus\Delta_{1}$ is $1$-almost neighborly, but not $2$-almost neighborly. Hence it has all its vertices in convex position and it is $i$-preserving.
	 
	The remaining cases follow by symmetry.
\end{proof}

\section{Relation with $k$-sets and $(i,j)$-partitions}\label{sec:ksets}

Let $S=\{s_1,\dots,s_n\}$ be a finite point set in~$\RR^d$. A subset of $S$ of cardinality $k$ is called a \defn{$k$-set} of $S$ if it is the intersection of~$S$ with an open halfspace. Studying the maximal possible number of $k$-sets is a central problem in combinatorial geometry, and only partial results are known. We refer to~\cite[Ch.~11]{MatousekLODG} for an introduction to the topic, to~\cite{Wagner2008} for a larger survey, and to~\cite{Sharir2011} for the latest improvement.

In~\cite{EdelsbrunnerValtrWelzl1997}, the $k$-set polytope $P_k(S)$ is defined as the convex hull of all $k$-barycenters of~$S$. Note that, if $\pi:\RR^n\to \RR^d$ is the linear projection with $\pi(\bfe_i)=\frac{1}{k}s_i$, then $P_k(S)=\pi(\Hnk)$ ($k$-set polytopes are defined without the $\frac{1}{k}$ factor in some references like~\cite{AndrzejakWelzl2003}). Hence, $k$-set polytopes for point sets of cardinality $n$ are projections of the $(n,k)$-hypersimplex. The importance of $P_k(S)$ lies in the fact that its vertices are in bijection with the $k$-sets of $S$.

In~\cite{AndrzejakWelzl2003}, Andrzejak and Welzl studied further the facial structure of $k$-set polytopes. To this end, they define an $(i,j)$-partition of $S$ as a pair $(A,B)$ of subsets of $S$ with $|A|=i$ and $|B|=j$ for which there is an oriented hyperplane $H$ such that $A=S\cap H$, and $B=S\cap H_{>0}$, where $H_{>0}$ is the positive open halfspace defined by~$H$. These generalize $k$-sets (which are $(0,k)$-partitions) and $j$-facets of point sets in general position (which are $(d,j)$-partitions).
Denoting by $D_{i,j}(S)$ the number of $(i,j)$-partitions of $S$, Andrzejak and Welzl observed that, for $S$ in general position,
\[f_{i-1}(P_k(S))=\begin{cases}1,& \text{ if }i=0,\\D_{0,k}(S),&\text{ if }i=1,\\\sum_{j=k-(i-1)}^{k-1} D_{i,j}(S)&\text{ otherwise};\end{cases}\]
which allowed them to use Euler's relation on $P_k(S)$ to derive linear relations on the numbers of $(i,j)$-partitions. See also~\cite[Sec.~3.2]{AndrzejakPhD}, which considers configurations that are not in general position.

In this section, we provide an interpretation of these concepts in terms of preserved faces under hypersimplex projections.

We define the \defn{dimension} of an $(i,j)$-partition $(A,B)$ as the dimension of the affine hull of~$A$.
Of course, if $S$ is in general position, then the dimension of any $(i,j)$-partition is $i-1$.

\begin{proposition}
Let $P_k(S)=\pi(\Hnk)\subset\RR^d$ be as before. Then
\begin{enumerate}[(i)]
 \item the vertices of $P_k(S)$ are in bijection with the $k$-sets of $S$, and
 \item the $e$-faces of $P_k(S)$ with $e\geq 1$ are in bijection with the $e$-dimensional $(i,j)$-partitions of~$S$ with $j+1\leq k\leq i+j-1$. 
\end{enumerate}
More precisely, for disjoint subsets $X,Y\subseteq [n]$ with $|Y|+1\leq k\leq |X|+|Y|-1$, 
and $A=\set{s_i}{i\in X}$ and $B=\set{s_j}{j\in Y}$, we have that
$(A,B)$ is an $e$-dimensional $(i,j)$-partition of $S$
if and only if the face 
$F= \Hnk^{I,J}$
is preserved under $\pi$, and $\pi(F)$ is $e$-dimensional; where $I=Y$ and $J=[n]\setminus(X\cup Y)$.

In particular, the strictly preserved $e$-faces with $e\geq 1$ are in bijection with the $(i-1)$-dimensional $(i,j)$-partitions of $S$ with $j+1\leq k\leq i+j-1$.

\end{proposition}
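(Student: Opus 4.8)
The plan is to set up an explicit dictionary between the combinatorial data of an $(i,j)$-partition and the facial data governing preserved faces of $\Hnk$ under $\pi$, and then read off both claims from Lemma~\ref{lem:configuration} (equivalently, the Projection Lemma~\ref{lem:projection_lemma}). Recall from Lemma~\ref{lem:hypersimplex} that every proper face of $\Hnk$ has the form $\Hnk^{I,J}$ with disjoint $I,J\subseteq[n]$, $|I|\leq k-1$, $|J|\leq n-k-1$: here $I$ indexes the coordinates forced to $1$ and $J$ those forced to $0$. The vertices $\set{\pi(e_\ell)}{\ell\in[n]}=\{\tfrac{1}{k}s_\ell\}$ (up to the factor $\tfrac1k$ from Lemma~\ref{lem:equivalence}) are the points of $S$, so a face $\Hnk^{I,J}$ records exactly which points of $S$ are \emph{used} (those with $\ell\notin J$) and which are forced into the supporting hyperplane at level $x_\ell=1$ (those with $\ell\in I$). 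The first observation to make precise is that, under $\pi$, the image $\pi(\Hnk^{I,J})$ is supported by a hyperplane of $\RR^d$ whose normal direction separates $S$ exactly as an $(i,j)$-partition does: the points indexed by $I$ lie \emph{on} the separating hyperplane, those indexed by $[n]\setminus(I\cup J)$ lie strictly on the positive side, and those in $J$ are irrelevant to the face. This is the geometric content I would spell out first, translating the algebraic normal-cone description into the statement that $A=\set{s_\ell}{\ell\in I}$ is $S\cap H$ and $B$ is $S\cap H_{>0}$ for the appropriate oriented hyperplane $H$.

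With the dictionary $I=Y$, $J=[n]\setminus(X\cup Y)$ (so $A=\set{s_\ell}{\ell\in X}$? — careful: matching the statement, $A$ corresponds to $I=Y$ and $B$ to $X$) in hand, the bulk of the argument is bookkeeping on cardinalities. Writing $|X|=i$, $|Y|=j$, the constraints $|I|\leq k-1$ and $|J|\leq n-k-1$ on a genuine face of $\Hnk$ translate precisely into $j\leq k-1$ and $n-(i+j)\leq n-k-1$, i.e. $j+1\leq k\leq i+j-1$, which is exactly the range of $k$ appearing in the proposition. I would verify this equivalence in both directions, so that faces of $\Hnk$ and $(i,j)$-partitions in the stated range correspond. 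The dimension claim then follows from the definition of the dimension of an $(i,j)$-partition as $\dim\operatorname{aff}(A)$: since $\pi$ restricted to the flat spanned by the used vertices is affine, $\dim\pi(\Hnk^{I,J})=e$ matches $\dim\operatorname{aff}(A)=e$ whenever the relevant vertices are affinely independent after projection, which is what \emph{preservation} (condition~\ref{it:pi-1F}, $\pi^{-1}(\pi(F))=F$) guarantees. The clause ``$\pi(F)$ is a face'' is condition~\ref{it:piFface}, so ``$F$ is preserved and $\pi(F)$ is $e$-dimensional'' is exactly the assertion that $(A,B)$ is an $e$-dimensional $(i,j)$-partition.

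For part~(i), the vertex case, I would treat it separately because a $k$-set is a $(0,k)$-partition, whose dimension $-1$ falls outside the $e\geq1$ regime; here the relevant face of $\Hnk$ is a \emph{vertex}, and strict preservation of vertices is precisely the $i=0$ case of Lemma~\ref{lem:configuration}, giving the bijection between preserved vertices and $k$-sets exactly as in the original $k$-set polytope description $P_k(S)=\pi(\Hnk)$. Finally, for the ``In particular'' sentence, I would note that an $(i,j)$-partition with $A$ in general position (i.e. $\dim\operatorname{aff}(A)=|A|-1=i-1$) corresponds to a face $F$ whose projection $\pi(F)$ is a \emph{simplex} of the same dimension as $F$, which is exactly condition~\ref{it:samedim}; thus strict preservation of an $e$-face with $e\geq 1$ singles out the $(e)$-dimensional — i.e. $(i-1)$-dimensional — $(i,j)$-partitions. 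The main obstacle I anticipate is not any deep inequality but keeping the three index sets $X,Y,[n]\setminus(X\cup Y)$ and their correspondences $I,J$ straight, and in particular checking carefully that the \emph{positive side} $H_{>0}$ of the separating hyperplane matches the coordinates forced to $1$ rather than $0$ (a sign/orientation choice fixed by the pairing $\bfm_j+\bfn_j=0$ from~\eqref{eq:symsum}); once the orientation is pinned down, both the cardinality range and the dimension statement are immediate.
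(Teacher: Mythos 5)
Your overall picture --- a dictionary between the faces $\Hnk^{I,J}$ and hyperplane partitions of $S$, with the cardinality translation $|I|\leq k-1$, $|J|\leq n-k-1$ becoming $j+1\leq k\leq i+j-1$ --- is the right one, and that bookkeeping is correct. But the tool you lean on is the wrong one, and this creates a genuine gap. Lemma~\ref{lem:configuration} and the Projection Lemma~\ref{lem:projection_lemma} characterize \emph{strictly} preserved faces, whereas the proposition is precisely about faces preserved in the weaker sense (conditions \ref{it:piFface} and \ref{it:pi-1F} only): the degenerate $(i,j)$-partitions, those with $\dim\operatorname{aff}(A)<i-1$, correspond exactly to preserved faces whose dimension \emph{drops} under $\pi$, and no positive-spanning criterion in the paper covers these. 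The paper instead argues directly with linear functionals: for $f\in(\RR^d)^*$, the face of $P_k(S)$ maximizing $f$ is the image of the face of $\Hnk$ maximizing $\pi^*(f)$, and sorting the values $f(s_\ell)$ (with $c$ the minimum over a maximizing $k$-subset) identifies that face as $\Hnk^{I,J}$ with $I=\set{\ell}{f(s_\ell)>c}$ and $J=\set{\ell}{f(s_\ell)<c}$. Relatedly, your dimension argument fails: preservation, $\pi^{-1}(\pi(F))=F$, does \emph{not} guarantee that ``the relevant vertices are affinely independent after projection'' --- that is essentially what strict preservation would give, and assuming it begs the question. The correct argument is that $\pi(F)$ is a translate of a dilate of $P_{k-j}(A)$, which is full-dimensional in a translate of $\operatorname{aff}(A)$ exactly when $1\leq k-j\leq i-1$; this is where the range $j+1\leq k\leq i+j-1$ is needed a second time, beyond ruling out vertices.

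Two further concrete problems. First, your orientation dictionary is backwards, and the parenthetical ``correction'' is backwards as well: in the statement, $A=\set{s_\ell}{\ell\in X}$ is the set of points \emph{on} the hyperplane and $B=\set{s_\ell}{\ell\in Y}$, with $Y=I$, is the set of points strictly on the positive side; i.e., the coordinates forced to $1$ are the points lying in \emph{every} maximizing $k$-subset. Moreover the points indexed by $J$ are not ``irrelevant'': they must lie strictly on the negative side, and this is exactly what condition~\ref{it:pi-1F} encodes --- if some $s_\ell$ with $\ell\in J$ lay on the hyperplane, the preimage of $\pi(F)$ would be a strictly larger face than $F$, and the correspondence with $(i,j)$-partitions (which requires $A=S\cap H$ exactly) would break. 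Second, part~(i) contains a subtlety that you do not address and cannot obtain from the $i=0$ case of Lemma~\ref{lem:configuration}, which characterizes when \emph{all} $\binom{n}{k}$ barycenters are in convex position --- a different statement: for arbitrary $S$, two distinct $k$-subsets may have equal barycenters, and one must show that such a coincident point is never a vertex of $P_k(S)$; the paper handles this with a genericity argument on the maximizing functional.
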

\begin{proof}
Faces of $P_k(S)$ are in bijection with faces of $\Hnk$ preserved under~$\pi$. More precisely, the face of $P_k(S)$ that maximizes the linear functional $f\in(\RR^d)^*$ is the image of the face~$F$ of $\Hnk$ maximized by $\pi^*(f)$, where $\pi^*$ denotes the adjoint of~$\pi$. That is, the vertices of~$F$ are the incidence vectors of the $S\in \binom{[n]}{k}$ that maximizes $\sum_{i\in S} f(s_i)$, where~$s_i=\pi(\bfe_i)$.

Fix $f\in(\RR^d)^*$ and let $e\in \binom{[n]}{k}$ be one of the subsets that maximizes $\sum_{i\in e} f(s_i)$. Set $c=\min_{i\in e} f(s_i)$, $X=\set{i\in [n]}{f(s_i)=c}$ and $Y=\set{i\in [n]}{f(s_i)>c}$. The hyperplane $f(x)=c$ defines an $(|X|,|Y|)$-partition $(A,B)$ with $A=\set{s_i}{i\in X}$ and $B=\set{s_j}{j\in Y}$.
Note that every $i\in Y$ belongs to $e$, and that for  $e'\in \binom{[n]}{k}$ we have $\sum_{i\in e'} f(s_i)=\sum_{i\in e} f(s_i)$ if and only if $Y\subseteq e'$ and $e'\subseteq  Y\cup X$. Therefore, the face of $\Hnk$ maximized by $\pi^*(f)$ is precisely $F=\Hnk^{I,J}$ with $I=Y$ and $J=[n]\setminus(X\cup Y)$. 

If $F$ is not a vertex, then we have $|Y|\leq k-1$ and $|X\cup Y|\geq k+1$. Conversely, every $(i,j)$-partition with $j+1\leq k\leq i+j-1$ corresponds to a linear functional that is maximized in such a face. Note that $\pi(F)$ is affinely equivalent to $P_{k-j}(A)$, which has the same dimension as the affine span of $A$ provided that $1\leq k-j\leq i-1$. 

The same argument works for vertices. The only subtlety lies in the fact that we claim that the preimage of a vertex of $P_k(S)$ must be a vertex of $\Hnk$. Indeed, while it is true that there might be subsets $e,e'\in \binom{[n]}{k}$ for which $\sum_{i\in e'} s_i=\sum_{i\in e} s_i$, such a subset cannot define a vertex of $P_k(S)$. The reason is that, if all the $s_i$'s are different, and if $f$ is a generic functional maximized at $\sum_{i\in e} f(s_i)$, then there must be some $j\in e'\setminus e$ with $f(s_j)>\frac{1}{k}\sum_{i\in e} f(s_i)$, and hence a subset $e''\in \binom{[n]}{k}$ with $\sum_{i\in e''} f(s_i)>\sum_{i\in e} f(s_i)$, contradicting the maximality of~$e$.
\end{proof}

\section{Hypergraphs with many barycenters in convex position}
\label{sec:extremal}

Now we study the extremal function $g_k(n,d)$ that counts the maximum number of barycenters in convex position that a $k$-uniform hypergraph on $n$ vertices embedded in~$\RR^d$ may have. As explained in Theorem~\ref{thm:extremal}, we distinguish three regimes: $d\geq 2k$, $k+1\leq d\leq 2k-1$ and $d\leq k$. 
By Theorem~\ref{thm:main}, we have $g_k(n,d)=\binom{n}{k}$ for $d\geq 2k$, which covers the first case.

By Theorem~\ref{thm:main}, we also know that $g_k(n,d)<\binom{n}{k}$ when $d\leq 2k-1$. By combining this result with de~Caen's bound on Tur\'an numbers for hypergraphs~\cite{DeCaen1983} we can get sharper upper bounds for $g_k$ when $d\leq 2k-1$, as $n$ grows.

Fix $k$ and  $1\leq d \leq 2k-1$. Using Theorem~\ref{thm:main}, we obtain that the maximum value $n=n_{k,d}$ so that $K_{n}^{(k)}$ has a convex embedding into $\RR^d$, for $d\geq 2$, is:

\begin{align}
	\label{aln:maxcomplete}
	n_{k,d}=\begin{cases}
		\floor{\frac{d}{2}}+k &\text{ if $1\leq d\leq 2k-4$},\\
                d+2 &\text{ if $d\in\{2k-3,2k-2,2k-1\}$}\\
                \infty &\text{ if $d\geq 2k$}.   
	\end{cases}
\end{align}
and for $d=1$, $n_{1,1}=2$,  $n_{k,1}=k$ for $k\geq 2$.

The first values of $n_{k,d}$ are contained in Table \ref{tab:maxvalues}.

\begin{table}[h]
\centering
\begin{tabular}{|c|cccccccccccccc|}
	\hline
$k\setminus d$ &     1    &     2    &     3    &     4    &     5    &     6     &     7     &     8     &     9     &    10     &    11     &    12     &    13     &    14     \\\hline
1              &\cellcolor{green}     2    & $\infty$ & $\infty$ & $\infty$ & $\infty$ & $\infty$  & $\infty$  & $\infty$  & $\infty$  & $\infty$  & $\infty$  & $\infty$  & $\infty$  & $\infty$  \\
2              &\cellcolor{green}     2    &\cellcolor{yellow}    4     &\cellcolor{yellow}     5    & $\infty$ & $\infty$ & $\infty$  & $\infty$  & $\infty$  & $\infty$  & $\infty$  & $\infty$  & $\infty$  & $\infty$  & $\infty$  \\
3              &\cellcolor{pink}     3    &\cellcolor{pink}    4     &\cellcolor{yellow}     5    &\cellcolor{yellow}     6    &\cellcolor{yellow}     7    & $\infty$  & $\infty$  & $\infty$  & $\infty$  & $\infty$  & $\infty$  & $\infty$  & $\infty$  & $\infty$  \\
4              &\cellcolor{pink}     4    &\cellcolor{pink}    5     &\cellcolor{pink}     5    &\cellcolor{pink}     6    &\cellcolor{yellow}     7    &\cellcolor{yellow}     8     &\cellcolor{yellow}     9     & $\infty$  & $\infty$  & $\infty$  & $\infty$  & $\infty$  & $\infty$  & $\infty$  \\
5              &\cellcolor{pink}     5    &\cellcolor{pink}    6     &\cellcolor{pink}     6    &\cellcolor{pink}     7    &\cellcolor{pink}     7    &\cellcolor{pink}     8     &\cellcolor{yellow}     9     &\cellcolor{yellow}    10     &\cellcolor{yellow}    11     & $\infty$  & $\infty$  & $\infty$  & $\infty$  & $\infty$  \\
6              &\cellcolor{pink}     6    &\cellcolor{pink}    7     &\cellcolor{pink}     7    &\cellcolor{pink}     8    &\cellcolor{pink}     8    &\cellcolor{pink}     9     &\cellcolor{pink}     9     &\cellcolor{pink}    10     &\cellcolor{yellow}    11     &\cellcolor{yellow}    12     &\cellcolor{yellow}     13    & $\infty$  & $\infty$  & $\infty$  \\
7              &\cellcolor{pink}     7    &\cellcolor{pink}    8     &\cellcolor{pink}     8    &\cellcolor{pink}     9    &\cellcolor{pink}     9    &\cellcolor{pink}    10     &\cellcolor{pink}    10     &\cellcolor{pink}    11     &\cellcolor{pink}    11     &\cellcolor{pink}    12     &\cellcolor{yellow}     13    &\cellcolor{yellow}     14    &\cellcolor{yellow}     15    & $\infty$  \\\hline
\end{tabular}
\caption{Values of $n_{k,d}$ for small values of $d$ and $k$. Yellow values are the cases $d\in\{2k-3,2k-2,2k-1\}$, red values are the cases $1\leq d\leq 2k-4$, and green values are the exceptional cases with $d=1$ where the standard formula does not hold.}\label{tab:maxvalues}

\end{table}
Let $EX(n,k,\ell)$ be the maximum number of hyperedges that a $k$-uniform hypergraph with $n$ vertices and no complete $\Kuh{\ell}{k}$ as an induced subhypergraph can have. These parameters are called \defn{Tur\'an numbers} for complete hypergraphs. We will use the following bound by de~Caen \cite{DeCaen1983}:

\begin{theorem}[\cite{DeCaen1983}]
\label{thm:decaen}
We have \[EX(n,k,\ell)\leq \left(1 - \frac{n-\ell+1}{n-k+1}\cdot \frac{1}{\binom{\ell-1}{k-1}}\right)\binom{n}{k}.\]

\end{theorem}

\begin{theorem}
\label{thm:upperbound}
	For $1\leq d\leq 2k-1$ we have $$g_k(n,d)\leq c_{k,d}\cdot n^k+o(n^k),$$ where 
	\[c_{k,d}=\frac{1}{k!}\left(1 - \frac{1}{\binom{n_{k,d}}{k-1}}\right), \] 
        for $n_{k,d}$ as defined in~\eqref{aln:maxcomplete}.
\end{theorem}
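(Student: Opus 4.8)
The plan is to combine the structural result of Theorem~\ref{thm:main} with de~Caen's Tur\'an bound from Theorem~\ref{thm:decaen}. The key observation is that $g_k(n,d)$ counts hyperedges of a $k$-uniform hypergraph $H=([n],E)$ admitting a convex embedding into $\RR^d$, and that by Theorem~\ref{thm:main} the complete hypergraph $\Kuh{\ell}{k}$ admits a convex embedding into $\RR^d$ precisely when $\ell\leq n_{k,d}$. Since convex embeddability is hereditary --- any subset of vertices of a convexly embedded hypergraph inherits a convex embedding of the induced subhypergraph --- a hypergraph $H$ that embeds into $\RR^d$ cannot contain $\Kuh{\ell}{k}$ as an induced subhypergraph for any $\ell > n_{k,d}$. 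Indeed, if it did, restricting the embedding to those $\ell$ vertices would produce a convex embedding of $\Kuh{\ell}{k}$ into $\RR^d$, contradicting $\ell > n_{k,d}$.

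First I would make this heredity argument explicit: set $\ell = n_{k,d}+1$, so that no convex embedding of $H$ into $\RR^d$ can have $\Kuh{\ell}{k}$ as an induced subhypergraph. This places $H$ in the scope of Theorem~\ref{thm:decaen}, giving
\[
|E| \leq EX(n,k,\ell) \leq \left(1 - \frac{n-\ell+1}{n-k+1}\cdot\frac{1}{\binom{\ell-1}{k-1}}\right)\binom{n}{k}.
\]
Substituting $\ell-1 = n_{k,d}$ into the binomial coefficient gives the target constant $\frac{1}{k!}(1-1/\binom{n_{k,d}}{k-1})$ once we extract the leading term. The next step is the asymptotic expansion as $n\to\infty$: with $k$ and $d$ (hence $\ell$ and $n_{k,d}$) fixed, the fraction $\frac{n-\ell+1}{n-k+1}\to 1$, and $\binom{n}{k} = \frac{n^k}{k!}+o(n^k)$, so the bound becomes
\[
|E| \leq \left(1 - \frac{1}{\binom{n_{k,d}}{k-1}}\right)\frac{n^k}{k!} + o(n^k) = c_{k,d}\cdot n^k + o(n^k),
\]
as claimed. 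Since this holds for every admissible $H$, taking the maximum over all such $H$ yields $g_k(n,d)\leq c_{k,d}\cdot n^k + o(n^k)$.

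The main obstacle I anticipate is bookkeeping at the boundaries: one must verify that $n_{k,d}$ as defined in~\eqref{aln:maxcomplete} is genuinely the largest $\ell$ for which $\Kuh{\ell}{k}$ embeds into $\RR^d$, handling the exceptional small cases (the green and $d=1$ entries of Table~\ref{tab:maxvalues}) and checking that $\binom{n_{k,d}}{k-1}$ is well-defined and nonzero in each regime. One should also confirm that the factor $\frac{n-\ell+1}{n-k+1}$ tends to $1$ rather than contributing to the leading coefficient --- this is immediate since $\ell$ and $k$ are constants --- and that the $o(n^k)$ absorbs both the correction from this ratio and the lower-order terms of $\binom{n}{k}$. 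These are routine but must be stated carefully to ensure the constant matches $c_{k,d}$ exactly; the conceptual content is entirely carried by the heredity observation plus Theorems~\ref{thm:main} and~\ref{thm:decaen}.
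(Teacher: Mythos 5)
Your proposal is correct and follows essentially the same route as the paper's proof: heredity of convex embeddability under taking induced subhypergraphs, an application of de~Caen's bound (Theorem~\ref{thm:decaen}) with $\ell=n_{k,d}+1$, and absorption of the $\frac{n-\ell+1}{n-k+1}$ correction together with the lower-order terms of $\binom{n}{k}$ into the $o(n^k)$ term. The boundary bookkeeping you flag (that $n_{k,d}$ from~\eqref{aln:maxcomplete} is indeed the embeddability threshold) is exactly what the paper takes as given from Theorem~\ref{thm:main}, so no gap remains.
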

\begin{proof}
	If a $k$-uniform hypergraph $G$ has convex dimension $d$, then any induced subhypergraph must also have convex dimension $d$. In particular, its largest complete sub-hypergraph cannot have more than $n_{k,d}$ vertices. Therefore, we may apply de Caen's bound with $\ell=n_{k,d}+1$ to obtain that $G$ has at most $EX(n,k,n_{k,d}+1)$ edges. 
	
	The result follows by using that $\binom{n}{k}=\frac{n^k}{k!}+o(n^k)$ and collecting the $\frac{n-n_{k,d}}{n-k+1}$ coefficient in the $o(n^k)$ term. 
\end{proof}

For $d\geq k+1$, we have an accompanying lower bound for $g_k$ of the same asymptotic order. Denote by 
$\Kumh{m}{n}{k}=\Kuh{n,\dots,n}{k}$ the complete $m$-partite $k$-uniform hypergraph with $m$ parts of $n$~vertices each.
One obtains a first bound of size $g_k(n,d)\geq \left(\nicefrac{n}{k}\right)^k+ o(n^k)$ by showing that $\Kumh{k}{n}{k}$ has a convex embedding into $\RR^{k+1}$. This can be done using a particular case of a result by Matschke, Pfeifle and Pilaud~\cite{Matschke2011}. Namely, Theorem~2.6 in \cite{Matschke2011} (with parameters\footnote{We use $\rm k$ to differentiate their parameter from our variable $k$.} $\mathrm{r}=k$, $\mathrm{k}=0$ and $\mathrm{n_i}= n$ for each~$\mathrm{i}$) provides~$k$ sets of $n$ points in~$\RR^{k+1}$ whose Minkowski sum has all the possible $n^k$ vertices. Mapping the vertices of $\Kumh{k}{n}{k}$ to these sets gives the desired convex embedding.

For $k=2$ and $d=3$, this gives a lower bound of order $\nicefrac{n^2}{4}+o(n)$. However, for this case a better lower bound of size $\floor{\nicefrac{n^2}{3}}$ was found by Swanepoel and Valtr in~\cite[Theorem~6]{SwanepoelValtr2010} using a convex embedding of $K_{n,n,n}$ into $\RR^3$. 

We close the gap by providing below an improved lower bound of size \[g_k(n,d)\geq \binom{d}{k}\left(\frac{n}{d}\right)^k+ o(n^k)\] for any $d\geq k+1$. One can easily verify that this bound is increasing with $d$ by using Bernoulli's inequality. In particular, 
we have \[\binom{d}{k}\left(\frac{n}{d}\right)^k\geq \frac{n^k}{(k+1)^{(k-1)}} \geq \left(\frac{n}{k}\right)^k;\]
which shows that our bound improves the one arising from the construction in~\cite{Matschke2011}.
To do so, we construct a convex embedding of $\Kumh{d}{n}{k}$ into $\RR^d$ for any $d\geq k+1$.

\begin{theorem}\label{thm:lowerbound}
For fixed $d\geq k+1$, there is a convex embedding of the complete $d$-partite $k$-uniform hypergraph $\Kumh{d}{n}{k}$ into $\RR^{d}$. Therefore, $g_k(n,d)\geq \binom{d}{k}\left(\frac{n}{d}\right)^k+ o(n^k)$ as~$n\to \infty$. 
\end{theorem}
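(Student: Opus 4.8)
The plan is to construct an explicit convex embedding of $\Kumh{d}{n}{k}$ into $\RR^d$ and verify it using the Gale-dual characterization of neighborliness developed in Section~\ref{sec:hypersimplicial-neighborly}. The natural strategy is to place the $d$ parts so that each part maps into a distinguished coordinate direction, mimicking the way the moment curve yields neighborly polytopes but respecting the multipartite structure. Concretely, I would pick $d$ generic points $p_1,\dots,p_d$ (for instance on the moment curve in $\RR^d$) and map the $n$ vertices of the $\ell$-th part to a tight cluster of $n$ points near $p_\ell$, perturbed along a short curve so that within each cluster the points are in ``convex position of order~$k$''. The key feature we need is that for any choice of one vertex from each of $k$ distinct parts, the corresponding $k$-barycenter is a vertex of the $k$-set polytope $P_k(S)$; and that different such choices give distinct barycenters in convex position.

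\textbf{Key steps.} First I would fix the coarse placement: send part $\ell$ to the vicinity of $p_\ell$, where $p_1,\dots,p_d$ are in sufficiently general position (on the moment curve) that any $k$ of them span a face of their convex hull, i.e.\ the configuration $\{p_1,\dots,p_d\}$ is $\lfloor d/2\rfloor$-neighborly, in particular $k$-neighborly since $d\geq k+1\geq \cdots$ — here I must be careful that $k$-neighborliness of $d$ points on the moment curve in $\RR^d$ holds, which it does as long as $2k\leq d$; when $d<2k$ I instead use that $d+1$ points in general position already form a simplex and exploit that only $k\leq d$ of the parts are ever involved in a single hyperedge. Second, I would show that the barycenter of any hyperedge (one vertex from each of some $k$-subset $L\subseteq[d]$ of parts) lies close to the barycenter $\tfrac1k\sum_{\ell\in L}p_\ell$, and that these ``coarse'' barycenters are themselves in convex position because $\{p_1,\dots,p_d\}$ is $k$-neighborly: the $k$-subsets of a $k$-neighborly point configuration have $k$-barycenters that are exactly the vertices of its $k$-set polytope. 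Third, within each cluster I would introduce a second-order perturbation (a scaled moment-type curve of length $\ll$ the cluster separation) so that, after fixing the part-pattern $L$, the finer choice of which vertex to take from each part of $L$ also yields barycenters in convex position; a continuity/perturbation argument then shows the union of all these barycenters remains in convex position provided the perturbation scale is small enough. Finally, counting gives $\binom{d}{k}$ choices of the part-pattern $L$ and $n^k$ choices of representatives (up to the $o(n^k)$ slack from patterns using fewer than $k$ distinct parts, which are negligible), yielding $g_k(n,d)\geq \binom{d}{k}(n/d)^k+o(n^k)$.

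\textbf{Main obstacle.} The delicate point is the nested convexity: I need the barycenters to be in convex position \emph{jointly}, not merely cluster-by-cluster, and the two scales (separation between clusters vs.\ spread within a cluster) must be reconciled. The clean way to organize this is to verify the Gale-dual condition~\eqref{eq:condneigh} directly for the associated configuration, or equivalently to exhibit, for each hyperedge, an explicit separating hyperplane whose normal is a small perturbation of the functional that isolates the coarse barycenter $\tfrac1k\sum_{\ell\in L}p_\ell$. I expect the main work to lie in showing that a single perturbation scale $\varepsilon=\varepsilon(n,d,k)$ simultaneously validates all $\binom{d}{k}n^k$ separating hyperplanes; this is a finite (for fixed $n$) collection of strict inequalities, so by compactness a suitable $\varepsilon$ exists, but making the dependence uniform enough to pass to the $o(n^k)$ asymptotic statement requires care. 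An alternative, cleaner route that avoids the two-scale analysis is to invoke the cited construction of Matschke, Pfeifle and Pilaud~\cite{Matschke2011} for the within-cluster structure and combine it with the $k$-neighborly coarse placement via a Cartesian/product argument; I would present whichever of these leads to the shortest verification of convex position.
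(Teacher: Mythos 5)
Your plan is genuinely different from the paper's proof, but it has a gap that your own ``main obstacle'' paragraph correctly identifies and then fails to close. In the two-scale cluster construction, what you ultimately need is: for every $k$-subset $L$ of the parts \emph{and} every choice of one representative per part of $L$, a linear functional that (a) lies in the interior of the normal cone of the coarse barycenter $\tfrac1k\sum_{\ell\in L}p_\ell$ in the coarse polytope, and (b) simultaneously exposes the chosen fine barycenter inside the Minkowski sum of the $k$ relevant clusters. Your compactness argument does not produce such functionals; it only shows that \emph{if} suitable within-cluster configurations exist, then some scale $\varepsilon>0$ works for fixed $n$. But the existence of the within-cluster configurations is the crux: a cluster attached to part $\ell$ must be compatible with all $\binom{d-1}{k-1}$ patterns $L\ni\ell$, whose normal cones are different, and generic clusters fail outright --- a generic Minkowski sum of $k$ point sets does not have all $n^k$ sums as vertices (this is precisely why the constructions of \cite{Matschke2011} and \cite{Weibel2012} are nontrivial theorems). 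Citing \cite{Matschke2011} does not repair this either: their Theorem~2.6 produces one Minkowski sum in $\RR^{k+1}$ with all $n^k$ vertices, with no control on where the exposing functionals lie, whereas you need the exposing functionals confined to each of the $\binom{d}{k}$ prescribed normal cones. Two smaller confusions: the neighborliness worry about $d$ points on the moment curve in $\RR^d$ is vacuous (any $d$ affinely independent points form a simplex, so every subset spans a face), and the worry about uniformity of $\varepsilon$ in $n$ is also unnecessary --- for each fixed $n$, any valid embedding gives the exact count $\binom{d}{k}n^k$ of hyperedges, and the asymptotic statement follows.

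For comparison, the paper avoids the two-scale difficulty entirely with a single global construction: choosing a set $A$ of $n$ distinct positive reals, part $i$ is mapped to the points $a\,\bfe_{i-1}+a^2\,\bfe_d$ for $a\in A$, where $\bfe_0:=-\sum_{i=1}^{d-1}\bfe_i$, so that \emph{all} parts share the same lift direction $\bfe_d$. Every $k$-barycenter, coming from a subset $S$ and values $(a_i)_{i\in S}$, is then exposed by the explicit tangent functional $-\bfe_d^*+\sum_{i\in S}2a_i\bfe_i^*$, and strict convex position of all $\binom{d}{k}n^k$ barycenters reduces to the one-line paraboloid inequality $2a_ib_i-b_i^2\le a_i^2$ (with equality analysis pinning down $S=T$ and $a_i=b_i$). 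There is no scale separation, no normal-cone bookkeeping, and no appeal to \cite{Matschke2011}. If you want to salvage your approach, the work lies exactly in constructing clusters whose Minkowski sums achieve all $n^k$ vertices \emph{with exposing functionals inside every required normal cone}; absent that construction, the proposal is not a proof.
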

\begin{proof}
Let $\bfe_1,\dots, \bfe_{d}$ be the standard basis vectors of $\RR^d$, and set $\bfe_0=-\sum_{i=1}^{d-1} \bfe_i$ (notice that the sum starts at $1$ and ends at $d-1$).  
 
 Consider a set~$A$ of $n$ distinct positive real numbers. For any $k$-subset $S$ of $\{0,\dots,d-1\}$ we define the set $X_S$, of cardinality $n^k$, as 
 \[X_S:=\set{\sum_{i\in S} (a_i\cdot \bfe_i + a_i^2\cdot \bfe_d)}{a_i\in A\text{ for each }i\in S}.\]
 Let $X:=\bigcup_S X_S$ be the union of these point sets for all $k$-subsets of $\{0,\dots,d-1\}$. Note that $|X|=\binom{n}{d}n^k$, as the subsets are disjoint (here we use the positivity of~$A$). We will prove that the point set $X$ is in convex position. Notice that for any pair of $k$-subsets $S$ and $T$, there is a linear automorphism of~$X$ that sends $X_S$ to~$X_T$. Hence, it suffices to show that the points of $X_S$ are vertices of $\conv(X)$ for $S=\{1,\dots, k\}$.
 
 We do so by exhibiting a supporting hyperplane for each of these points. Fix a point $ \bfp=\sum_{i\in S} (a_i\cdot \bfe_i + a_i^2\cdot \bfe_d)\in X_S$, and consider the linear functional  $\bfv\in(\RR^d)^*$ given by $\bfv=-\bfe_d^* + \sum_{i\in S} 2a_i\cdot \bfe_i^*$. Then we have that $\sprod{\bfv}{\bfp}=\sum_{i\in S}  a_i^2$. We will see that 
 $\sprod{\bfv}{\bfq}<\sum_{i\in S}  a_i^2$ for any other $\bfq\in X$. Let $\bfq=\sum_{i\in T} (b_i\cdot \bfe_i + b_i^2\cdot \bfe_d)\in X_T$, and set $b_i=0$ for any $0\leq i\leq d-1$ not in $T$. 
 
 Then, using that $a_i,b_i\geq 0$ we see that
 \begin{align*}
  \sprod{\bfv}{\bfq}=\sum_{i\in S}(2a_ib_i - 2a_ib_0 -b_i^2) -\sum_{i\in T\ssm S}b_i^2\leq \sum_{i\in S}(a_i^2 -(a_i-b_i)^2)\leq \sum_{i\in S}a_i^2,
 \end{align*}
which can only be an equality if $S=T$ and $a_i=b_i$ for all $i\in S$; that is, if $\bfp=\bfq$.

This shows that all the points in $X_S$ are vertices of $\conv (X)$, and, by symmetry, that all the points of $X$ are vertices of $\conv (X)$.

To conclude the proof, let  $V=V_1\cup \cdots\cup V_d$ be the vertex set of $\Kumh{d}{n}{k}$, and consider a map $f:V\to \mathbb{R}^{d}$ that maps bijectively each~$V_i$ to $\set{a\cdot \bfe_{i-1}+a^2\cdot \bfe_{d}}{a\in A}$. 

Any hyperedge from $\Kumh{d}{n}{k}$ is obtained by choosing a $k$-subset $S$ of $\{0,\dots, d-1\}$ and then one vertex from each $V_i$ with $i\in S$. And hence every $k$-barycenter is precisely of the form $$\frac{1}{k}\sum_{i\in S} (a_i\cdot \bfe_i + a_i^2\cdot \bfe_d)$$ with $a_i\in A$ for $i\in S$. All these barycenters lie in convex position (they form $\frac{1}{k} X$), so $f$ is indeed a convex embedding into~$\RR^{d}$.
\end{proof}

Combining Theorems~\ref{thm:upperbound} and~\ref{thm:lowerbound}, we get the following estimation for the coefficient $\gamma_{k,d}$ of $n^k$ in the asymptotic development of $g_k(n,d)$ in the range $k+1\leq d\leq 2k-1$. 

\begin{cor}
\label{cor:asymptotic_constant}
	If $k+1\leq d\leq 2k-1$, then $g_k(n,d)$ is in $\Theta(n^k)$. More precisely, $$g_k(n,d)=\gamma_{k,d}\cdot n^k+o(n^k)$$ for some constant $\gamma_{k,d}$ satisfying 
 
 \[\binom{d}{k}\frac{1}{d^k} \leq \gamma_{k,d} \leq \frac{1}{k!}\left(1 - \frac{1}{\binom{n_{k,d}}{k-1}}\right)\]
 where 
 \[	n_{k,d}=\begin{cases}
		d+2 &\text{ if $d\geq 2k-3$, }\\
		\floor{\frac{d}{2}}+k &\text{ if $1\leq d\leq 2k-4$}.
	\end{cases}
	\]
 for $d\neq 1$, 
 and $n_{1,1}=2$ and $n_{k,1}=k$ for $k\geq 2$.
\end{cor}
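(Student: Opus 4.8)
The plan is to obtain the corollary by sandwiching $g_k(n,d)$ between the two estimates already established, while being careful about one subtle point: the inequalities coming from Theorems~\ref{thm:upperbound} and~\ref{thm:lowerbound} only bound the $\liminf$ and $\limsup$ of $g_k(n,d)/n^k$, so in order to write $g_k(n,d)=\gamma_{k,d}\cdot n^k+o(n^k)$ for a \emph{single} constant $\gamma_{k,d}$ I first need to argue that the limit actually exists.

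First I would record the two bounds. Theorem~\ref{thm:lowerbound} gives $g_k(n,d)\geq \binom{d}{k}\left(\frac{n}{d}\right)^k+o(n^k)$, that is, $\liminf_{n} g_k(n,d)/n^k \geq \binom{d}{k}/d^k$. Theorem~\ref{thm:upperbound} gives $g_k(n,d)\leq c_{k,d}\cdot n^k+o(n^k)$ with $c_{k,d}=\frac{1}{k!}\bigl(1-1/\binom{n_{k,d}}{k-1}\bigr)$, so $\limsup_{n} g_k(n,d)/n^k\leq c_{k,d}$. In the range $k+1\leq d\leq 2k-1$ the value $n_{k,d}$ of~\eqref{aln:maxcomplete} specializes to $d+2$ when $d\in\{2k-3,2k-2,2k-1\}$ and to $\floor{d/2}+k$ when $1\leq d\leq 2k-4$, which is exactly the piecewise formula in the statement; this is a routine check. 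Since both bounds are of the form $(\text{positive constant})\cdot n^k$, at this stage one already gets $g_k(n,d)\in\Theta(n^k)$.

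The main obstacle is to promote these to a genuine limit. For that I would show that the normalized quantity $a_n:=g_k(n,d)/\binom{n}{k}$ is non-increasing in $n$, so that it converges, and then set $\gamma_{k,d}=\frac{1}{k!}\lim_{n} a_n$. Monotonicity follows from an averaging argument exploiting that convex dimension is hereditary under induced subhypergraphs: if $f\colon [n]\to\RR^d$ is a convex embedding of an extremal hypergraph $H$ having $g_k(n,d)$ edges, then restricting $f$ to any $(n-1)$-subset $T$ yields a convex embedding of the induced subhypergraph on $T$ (a subset of a point set in convex position is in convex position), so that subhypergraph has at most $g_k(n-1,d)$ edges. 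Each edge of $H$ survives in exactly $n-k$ of the $n$ induced subhypergraphs on $(n-1)$-subsets, whence $(n-k)\,g_k(n,d)=\sum_{T}|E(H[T])|\leq n\cdot g_k(n-1,d)$. Using $\binom{n}{k}=\frac{n}{n-k}\binom{n-1}{k}$, this rearranges to $a_n\leq a_{n-1}$.

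Finally I would combine the pieces. The sequence $a_n$ is non-increasing and bounded below by $0$, hence converges to some $\delta_{k,d}\in[0,1]$; and since $\binom{n}{k}=\frac{n^k}{k!}+O(n^{k-1})$ and $a_n$ is bounded, I get $g_k(n,d)=a_n\binom{n}{k}=\frac{\delta_{k,d}}{k!}\,n^k+o(n^k)$. Setting $\gamma_{k,d}=\delta_{k,d}/k!$ then yields the asymptotic expansion with a single constant, and the two inequalities above pin it into the interval $\binom{d}{k}/d^k\leq\gamma_{k,d}\leq c_{k,d}$. I expect the averaging/monotonicity step to be the only real content beyond quoting Theorems~\ref{thm:upperbound} and~\ref{thm:lowerbound}; everything else is bookkeeping with binomial asymptotics.
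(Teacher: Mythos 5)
Your proposal is correct, and it actually does more than the paper's own proof. The paper's argument for this corollary is a one-liner: it simply combines Theorem~\ref{thm:upperbound} and Theorem~\ref{thm:lowerbound}. As you correctly observe, those two results by themselves only pin down $\limsup_n g_k(n,d)/n^k$ and $\liminf_n g_k(n,d)/n^k$; the existence of a single constant $\gamma_{k,d}$ with $g_k(n,d)=\gamma_{k,d}\cdot n^k+o(n^k)$ is taken for granted in the paper and never justified. Your averaging step fills exactly this gap: restricting an extremal convex embedding to an $(n-1)$-subset stays a convex embedding (a subset of points in convex position is in convex position, and convex embeddability is hereditary under induced subhypergraphs), each hyperedge survives in exactly $n-k$ of the $n$ restrictions, so $(n-k)\,g_k(n,d)\leq n\,g_k(n-1,d)$, which via $\binom{n}{k}=\frac{n}{n-k}\binom{n-1}{k}$ shows $g_k(n,d)/\binom{n}{k}$ is non-increasing and hence convergent. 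This is the classical Katona--Nemetz--Simonovits monotonicity argument for Tur\'an-type densities, and it is the right tool here; the rest of your write-up (the identification of $n_{k,d}$ in the stated range of $d$, and converting the bounds on $\liminf$ and $\limsup$ into bounds on $\gamma_{k,d}$) matches the paper's bookkeeping. In short: the paper's route is ``quote the two theorems,'' yours is ``quote the two theorems \emph{plus} prove the limit exists,'' and yours is the more complete argument.
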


For $d\leq k$, we do not know the asymptotic order of $g_k(n,d)$. We will show that $g_k(n,d)\in \Omega(n^{d-1})$, but we do not know any non-trivial upper bound. 
\begin{remark}\label{rmk:upperboundk>=k}
There is an obvious $O(n^d)$ bound for the number of $k$-sets of a configuration of $n$ points in~$\RR^d$, as every $k$-set is separated from the other points by an affine hyperplane, and the number of separating hyperplanes is at most $O(n^d)$ (see for example~\cite[Thm~2.2]{AndrzejakWelzl2003}). In the published version of this paper~\cite{jctb2021}, we wrongly claimed that this argument extended to arbitary hypergraphs and gave an upper bound of order~$O(n^d)$ for $g_k(n,d)$. We thank Brett Leroux for noticing our mistake and letting us know. We only have the trivial upper bound~$\binom{n}{k}$ for $g_k(n,d)$. The problem of finding an upper bound that is independent from~$k$ remains open.
\end{remark}

We conclude with a construction of Weibel~\cite{Weibel2012} for Minkowski sums of polytopes that will provide a lower bound of $\Omega(n^{d-1})$ for $g_k(n,d)$. (We refer to~\cite[Sect.~5]{Weibel2012} for the details of the construction.) 

\begin{theorem}[{Theorem~3 in~\cite{Weibel2012}}]\label{thm:weibel}
Let $k\geq d$ and $n\geq d+1$, then there exist $d$-polytopes $P_1, \dots , P_k\subset \RR^d$ with $n$ vertices such that 
$P_1+\cdots+P_k$ has $\Theta(n^{d-1})$ vertices.
\end{theorem}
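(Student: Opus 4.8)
The plan is to prove both directions of the $\Theta(n^{d-1})$ estimate through the standard dictionary between vertices of a Minkowski sum and cells of the common refinement of normal fans. The starting observation is that a sum $v_1+\dots+v_k$ of vertices $v_i$ of $P_i$ is a vertex of $P_1+\dots+P_k$ if and only if the normal cones $N(v_i,P_i)$ share a common point in their relative interiors; equivalently, the vertices of the sum are in bijection with the full-dimensional cones of the common refinement $\bigwedge_{i=1}^k \mathcal N(P_i)$ of the normal fans. Intersecting with the unit sphere $S^{d-1}$ turns the problem into counting the full-dimensional cells of the overlay of $k$ polyhedral subdivisions of $S^{d-1}$, the $i$-th one induced by the normal fan of the $d$-polytope $P_i$. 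This is the viewpoint from which I would attack both the upper and the lower bound.

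For the upper bound I would invoke the general estimates for the $f$-vectors of Minkowski sums: once $k\ge d$ and each summand is a $d$-polytope with $n$ vertices, the number of vertices of $P_1+\dots+P_k$ is $O(n^{d-1})$ for fixed $k$ and $d$. This bounds \emph{every} such sum, and in particular supplies the $O$-direction; it is the counterpart, in terms of overlaid normal fans, of the fact that an arrangement living in a $(d-1)$-dimensional ambient space cannot have more than $\Theta(N^{d-1})$ cells.

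The substance is the matching lower bound. Here I would construct explicit summands whose normal fans are ``as transverse as possible'', so that their overlay on $S^{d-1}$ is as rich as a generic hyperplane arrangement. Concretely, one places the $n$ vertices of each $P_i$ along a moment-type curve (with disjoint, interleaved parameters) and applies small generic perturbations to the $k$ summands, arranging that the codimension-one walls of the $k$ normal fans meet in general position. One then argues that no full-dimensional cell of the overlay is lost to a coincidence, so that the cell count matches the extremal $\Theta(n^{d-1})$ permitted by the upper bound; since each such cell is an intersection of one full-dimensional normal cone per fan, it corresponds to a distinct vertex of $P_1+\dots+P_k$. Combining the two bounds yields $\Theta(n^{d-1})$.

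The main obstacle is precisely this non-degeneracy-and-counting step: one must guarantee that the explicit summands are genuinely in general position, so that the walls of the $k$ normal fans cut $S^{d-1}$ into the maximal number of cells rather than collapsing into coincidences, while simultaneously keeping every $P_i$ full-dimensional with exactly $n$ vertices and exploiting the hypothesis $k\ge d$ to supply enough transverse wall families to realize the exponent $d-1$. Verifying that a generic moment-curve placement attains this maximum is the delicate, construction-specific heart of the argument; its detailed realization is carried out in~\cite[Sect.~5]{Weibel2012}.
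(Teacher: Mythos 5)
This statement is not proved in the paper at all: it is imported verbatim as Theorem~3 of \cite{Weibel2012}, and the authors simply point to Section~5 of that reference for the construction. Your attempt, stripped of the framework discussion, does the same thing --- you state explicitly that the ``delicate, construction-specific heart of the argument'' is ``carried out in \cite[Sect.~5]{Weibel2012}'' --- so what you have written is a citation wrapped in a (correct) dictionary, not a proof. The dictionary itself is fine: $v_1+\cdots+v_k$ is a vertex of the sum precisely when the interiors of the normal cones $N(v_i,P_i)$ meet, so vertices of the sum correspond to full-dimensional cells of the common refinement of the fans. But the entire content of the theorem is the lower bound, and your sketch of it would not work as stated. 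Generic perturbation only \emph{prevents} coincidences; it does not \emph{create} cells. To obtain $\Omega(n^{d-1})$ full-dimensional cells in the overlay, the walls of the $k$ fans must actually cross one another $\Omega(n^{d-1})$ times, grid-fashion, and nothing about a moment-curve placement with interleaved parameters plus a generic perturbation guarantees any lower bound on the number of such crossings. Designing summands that force these crossings, and verifying the count, is exactly what Weibel's construction does; it is the step that makes the theorem true, and it is missing here.

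The upper-bound step is also weaker than you present it. The analogy with ``an arrangement in a $(d-1)$-dimensional ambient space has $O(N^{d-1})$ cells'' does not deliver $O(n^{d-1})$: the walls of the normal fan of a $d$-polytope with $n$ vertices correspond to its \emph{edges}, of which there can be $\Theta(n^2)$ once $d\geq 4$ (e.g.\ $2$-neighborly polytopes), so the arrangement bound only yields $O\bigl(n^{2(d-1)}\bigr)$. The bound $O_{k,d}(n^{d-1})$ for $k\geq d$ summands is itself a nontrivial theorem of \cite{Weibel2012}, so invoking it is again a citation rather than an argument. None of this affects how the present paper uses the statement --- only the existence of summands achieving $\Omega(n^{d-1})$ is needed for the lower bound on $g_k(n,d)$, and for that the paper, like you, relies on Weibel --- but judged as a self-contained proof attempt, yours has a genuine gap at precisely the point where the work lies.
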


\begin{cor}
 For $k\geq d$, there is a subhypergraph of the complete $k$-uniform $k$-partite hypergraph $\Kuh{k\times n}{k}$ with $\Theta(n^{d-1})$ hyperedges that has a convex embedding into $\RR^{d}$. Therefore, we have that $g_k(n,d)$ is in~$\Omega(n^{d-1})$ as~$n\to \infty$.
\end{cor}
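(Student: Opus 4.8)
The plan is to reduce the corollary to Weibel's theorem (Theorem~\ref{thm:weibel}) via the correspondence between convex embeddings of $k$-partite hypergraphs and convex position in Minkowski sums, which was already discussed in the introduction. Recall that if $P_1,\dots,P_k$ are point sets, then the vertices of $\conv(P_1+\cdots+P_k)$ that are ``genuine'' Minkowski sums (one summand from each $P_\ell$) correspond exactly to $k$-barycenters of a $k$-partite hypergraph on the vertex sets $P_1,\dots,P_k$ that lie in convex position. So the first step is to take the $d$-dimensional polytopes $P_1,\dots,P_k\subset\RR^d$ with $n$ vertices each, provided by Theorem~\ref{thm:weibel}, whose Minkowski sum $P_1+\cdots+P_k$ has $\theta(n^{d-1})$ vertices.

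Next I would set up the embedding explicitly. Let $V=V_1\cup\cdots\cup V_k$ be the vertex set of $\Kuh{n,\ldots,n}{k}$, and define $f:V\to\RR^d$ by mapping each $V_\ell$ bijectively onto the vertex set of $P_\ell$. A hyperedge of $\Kuh{n,\ldots,n}{k}$ picks exactly one vertex from each part, so its $k$-barycenter is $\frac1k(p_1+\cdots+p_k)$ with $p_\ell\in P_\ell$, i.e.\ a rescaled Minkowski-sum point. The vertices of $\conv(P_1+\cdots+P_k)$ that arise as a sum of one vertex from each summand are in convex position; each such vertex is uniquely expressible as $p_1+\cdots+p_k$ with $p_\ell$ a vertex of $P_\ell$ (a standard fact: a vertex of a Minkowski sum decomposes uniquely into vertices of the summands). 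Thus these vertices are in bijection with a subset $E$ of hyperedges, and the corresponding $k$-barycenters form $\frac1k$ times that vertex set, hence lie in strictly convex position. This exhibits $f$ as a convex embedding of the subhypergraph $(V,E)$ into $\RR^d$.

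It then remains to count. Weibel's construction guarantees $\theta(n^{d-1})$ vertices of the Minkowski sum, but I must ensure that $\theta(n^{d-1})$ of them are of the required ``one summand per factor'' type rather than arising from lower-dimensional mixed contributions. Since each $P_\ell$ is $d$-dimensional with $n$ vertices and every vertex of the sum decomposes into one vertex from each summand, \emph{every} vertex of $\conv(P_1+\cdots+P_k)$ is automatically of this type, so all $\theta(n^{d-1})$ of them contribute a distinct hyperedge. Hence $|E|\in\theta(n^{d-1})$, giving $g_k(n,d)\geq|E|\in\Omega(n^{d-1})$.

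The main obstacle I anticipate is the verification that distinct Minkowski-sum vertices yield distinct hyperedges, i.e.\ the unique-decomposition property, and that no two chosen hyperedges collapse to the same barycenter; both follow from the fact that a point of $\conv(\sum_\ell P_\ell)$ is a vertex only if it has a unique representation as a sum of points of the $P_\ell$, forcing each summand to be a vertex. Care is also needed to confirm that the embedding is injective and that the relevant barycenters are genuinely in \emph{strictly} convex position (no barycenter a convex combination of others), which is inherited directly from the vertices of the sum being in convex position. Once these points are settled, the asymptotic count and the desired $\Omega(n^{d-1})$ bound follow immediately from Theorem~\ref{thm:weibel}.
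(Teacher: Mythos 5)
Your proposal is correct and follows essentially the same route as the paper: take Weibel's polytopes $P_1,\dots,P_k$, map the parts $V_\ell$ of $\Kuh{n,\ldots,n}{k}$ to their vertex sets, and let the hyperedges be exactly the $k$-tuples whose sums are vertices of $P_1+\cdots+P_k$, so that the barycenters are $\frac{1}{k}$ times these vertices and hence in convex position. The extra verifications you flag (unique decomposition of Minkowski-sum vertices, distinctness of hyperedges) are left implicit in the paper's proof but are standard and handled correctly in your write-up.
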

\begin{proof}
Consider the polytopes $P_i\subset \RR^d$, $1\leq i\leq k$,  with vertices $\{p_{i1},\dots,p_{in}\}$ from Theorem~\ref{thm:weibel}. We define the subhypergraph $H$ of~$\Kuh{n,n,\ldots,n}{k}$
with vertex set $V=V_1\cup \cdots\cup V_k$ with \[V_i=\{v_{i1},\ldots, v_{in}\} \text{ for $i\in [k]$}\]
and whose hyperedges correspond to the $k$-tuples $(v_{1j_1},\dots, v_{1j_k})$ such that $p_{1j_1}+\cdots +p_{kj_k}$ is a vertex of $P_1+\cdots+P_k$. Then the map that sends $v_{ij}$ to $p_{ij}$ is a convex embedding.
\end{proof}

\section{Discussion and open problems}\label{sec:open}

\subsubsection*{Other hypergraphs:}
After studying complete hypergraphs, it would be interesting to determine the convex dimension of other families of uniform hypergraphs. Note that our approach via Lemmas~\ref{lem:equivalence} and~\ref{lem:projection_lemma} also extends to other hypergraphs when the hypersimplex is replaced by the corresponding hyperedge polytopes.

A particularly interesting family of uniform hypergraphs that comes to mind are (sets of bases of) \defn{matroids}. Their hyperedge polytopes, known as \defn{matroid polytopes}, have been extensively studied and many of their properties are well understood. They are in particular a relevant family in the context of the convex combinatorial optimization problems that originally motivated the study of the convex dimension of hypergraphs~\cite{OnnRothblum2004}. The associated optimization problem is known as \defn{convex matroid optimization}~\cite{Onn2003}.

\subsubsection*{Asymptotic behavior of $g_k(n,d)$:} It is a challenging question to understand the asymptotic growth of $g_k(n,d)$ when $d\leq k$. For $d\leq k$, we only know that $g_k(n,d)\in \Omega(n^{d-1})$. We originally claimed an $O(n^d)$ upper bound, but our argument was wrong. It would be interesting to find an upper bound for $g_k(n,d)$ that does not depend on~$k$, but any upper bound improving on the trivial $\binom{n}{k}$ would already be interesting progress. When $d=k=2$, it is known that $g_2(n,2)\in \Theta(n^{\frac{4}{3}})$~\cite{Bilka2010,EisenbrandPachRothvossSopher2008}.

When $k+1\leq d\leq 2k-1$, we know that $g_k(n,d)=\gamma_{d,k}\cdot n^k+o(n^k)$, but the exact value of $\gamma_{d,k}$ is unknown (see Theorem~\ref{thm:extremal}), even when $k=2$ and $d=3$~\cite{SwanepoelValtr2010}.

\subsubsection*{Large subsets of Minkowski sums:} Restricting this extremal problem to subgraphs of complete $k$-partite $k$-uniform hypergraphs is equivalent to the question of finding large convexly independent subsets of the Minkowski sum of $k$ point sets. Indeed, the $k$-barycenters of an embedding of a complete $k$-partite $k$-uniform hypergraph are (a dilation) of the Minkowski sum of the embeddings of each of the $k$ parts.
The planar case with $k=2$ has been an active area of research during the last decade. The unconstrained version was (asymptotically) solved in~\cite{Bilka2010,EisenbrandPachRothvossSopher2008}, and the case where the point sets are themselves in convex position was (asymptotically) solved in~\cite{Tiwary2014,SkomraThomasse}. Some of these cases were considered in~\cite{GarciaMarcoKnauer2016}, who introduced variants with weak convexity and sharpened the bounds when the two point sets coincide and are in convex position.
The case $k=2$ and $d=3$ was studied in~\cite{SwanepoelValtr2010}. However, we are not aware of any result for larger~$k$ except for the lower bounds arising from Minkowski sums with many vertices.
Finding the maximal number of vertices of a Minkowski sum has been solved~\cite{AdiprasitoSanyal2016} (although closed formulas seem rather involved and are not explicit). 

\subsubsection*{Extremal problem for strong convex embeddings:} The notion of strong convex dimension of a hypergraph poses an analogous extremal problem on the maximum number of hyperedges \defn{$h_k(n,d)$} that a $k$-uniform hypergraph on $n$ vertices with a strong convex embedding to $\RR^d$ can have. The asymptotic values of $g_k(n,d)$ and $h_k(n,d)$ may largely differ: it is shown in \cite{HalmanOnnRothblum2007, GarciaMarcoKnauer2016} that $h_2(n,2)$ is linear, while $g_2(n,2)$ is in $\Theta(n^{4/3})$ \cite{Bilka2010,EisenbrandPachRothvossSopher2008}. What are other quantitative and qualitative differences between convex and strong convex embeddings when $k>2$?

\subsubsection*{Combinatorial and topological hypersimplices:} 
The (topological) van Kampen-Flores Theorem~\cite{Flores1935,vanKampen1932} states that the $i$-skeleton of the $(2i+2)$-simplex cannot be embedded in~$\RR^{2i}$. This begs the question whether an analogous result for all hypersimplices also holds: Is $d(n,k,i)-1$ the smallest dimension where the $i$-skeleton of~$\Hnk$ can be topologically embedded\footnote{The shift by~$1$ comes from the fact that in a polytopal projection we are embedding it into the boundary of the $d$-polytope, which is homeomorphic to a $(d-1)$-sphere.}?

This would imply that Theorem~\ref{thm:skeleton} also holds for combinatorial hypersimplices. 
This is not immediate from our results, since there are plenty of polytopes that are combinatorially but not affinely isomorphic to a hypersimplex (in fact, the realization spaces of hypersimplices are far from being understood~\cite{GrandePadrolSanyal2018}). 

\section*{Acknowledgements} 

We are grateful to Kolja Knauer for introducing us to this problem and many interesting discussions. We thank Raman Sanyal for useful comments on an earlier version of this manuscript. We also want to thank
Edgardo Rold\'an-Pensado for keeping us informed about their progress on convex embeddings of multipartite graphs into~$\RR^3$. We want to thank anonymous reviewers for their valuable comments, one of which pointed out an additional reference and an alternative proof of Theorem \ref{thm:main}.

Finally, we are very grateful to Brett Leroux, who warned us about a mistake in the proof of one of our originally claimed results~\cite[Thm.~6.5]{jctb2021}.

\bibliographystyle{amsplain}
\bibliography{midpoints}

\providecommand{\bysame}{\leavevmode\hbox to3em{\hrulefill}\thinspace}
\providecommand{\MR}{\relax\ifhmode\unskip\space\fi MR }
\providecommand{\MRhref}[2]{%
  \href{http://www.ams.org/mathscinet-getitem?mr=#1}{#2}
}
\providecommand{\href}[2]{#2}
\begin{thebibliography}{10}

\bibitem{AdiprasitoSanyal2016}
Karim~A. Adiprasito and Raman Sanyal, \emph{Relative {S}tanley-{R}eisner theory
  and upper bound theorems for {M}inkowski sums}, Publ. Math. Inst. Hautes
  \'{E}tudes Sci. \textbf{124} (2016), 99--163.

\bibitem{AndrzejakPhD}
Artur Andrzejak, \emph{On k-sets and their generalizations}, Ph.D. thesis,
  Swiss Federal Institute of Technology, Zurich (ETH Zurich), 2000.

\bibitem{AndrzejakWelzl2003}
Artur Andrzejak and Emo Welzl, \emph{In between {$k$}-sets, {$j$}-facets, and
  {$i$}-faces: {$(i,j)$}-partitions}, Discrete Comput. Geom. \textbf{29}
  (2003), no.~1, 105--131.

\bibitem{Bilka2010}
Ond\v{r}ej B\'{\i}lka, Kevin Buchin, Radoslav Fulek, Masashi Kiyomi, Yoshio
  Okamoto, Shin-ichi Tanigawa, and Csaba~D. T\'{o}th, \emph{A tight lower bound
  for convexly independent subsets of the minkowski sums of planar point sets},
  Electron. J. Combin. \textbf{17} (2010), \#N35, 4.

\bibitem{DeCaen1983}
Dominique de~Caen, \emph{Extension of a theorem of {M}oon and {M}oser on
  complete subgraphs}, Ars Combin. \textbf{16} (1983), 5--10.

\bibitem{EdelsbrunnerValtrWelzl1997}
Herbert Edelsbrunner, Pavel Valtr, and Emo Welzl, \emph{Cutting dense point
  sets in half}, Discrete Comput. Geom. \textbf{17} (1997), no.~3, 243--255.

\bibitem{EisenbrandPachRothvossSopher2008}
Friedrich Eisenbrand, J\'{a}nos Pach, Thomas Rothvo\ss, and Nir~B. Sopher,
  \emph{Convexly independent subsets of the {M}inkowski sum of planar point
  sets}, Electron. J. Combin. \textbf{15} (2008), no.~1, \#N8, 4.

\bibitem{Ewald}
G\"{u}nter Ewald, \emph{Combinatorial convexity and algebraic geometry},
  Graduate Texts in Mathematics, vol. 168, Springer-Verlag, New York, 1996.

\bibitem{Flores1935}
Antonio {Flores}, \emph{{\"Uber \(n\)-dimensionale Komplexe, die im
  \(R_{2n+1}\) absolut selbstverschlungen sind.}}, {Ergebnisse math. Kolloquium
  Wien Nr. 6, 4-6 (1935).}, 1935.

\bibitem{Galashin2018}
Pavel Galashin, \emph{Plabic graphs and zonotopal tilings}, Proc. Lond. Math.
  Soc. (3) \textbf{117} (2018), no.~4, 661--681.

\bibitem{GarciaMarcoKnauer2016}
Ignacio Garc\'{\i}a-Marco and Kolja Knauer, \emph{Drawing graphs with vertices
  and edges in convex position}, Comput. Geom. \textbf{58} (2016), 25--33.

\bibitem{GrandePadrolSanyal2018}
Francesco Grande, Arnau Padrol, and Raman Sanyal, \emph{Extension complexity
  and realization spaces of hypersimplices}, Discrete Comput. Geom. \textbf{59}
  (2018), no.~3, 621--642.

\bibitem{Grunbaum}
Branko Gr\"unbaum, \emph{Convex polytopes}, second ed., Graduate Texts in
  Mathematics, vol. 221, Springer-Verlag, New York, 2003, Prepared and with a
  preface by Volker Kaibel, Victor Klee and G\"unter M. Ziegler.

\bibitem{HalmanOnnRothblum2007}
Nir Halman, Shmuel Onn, and Uriel~G. Rothblum, \emph{The convex dimension of a
  graph}, Discrete Appl. Math. \textbf{155} (2007), no.~11, 1373 -- 1383.

\bibitem{JoswigZiegler2000}
M.~Joswig and G.~M. Ziegler, \emph{Neighborly cubical polytopes}, vol.~24,
  2000, The Branko Gr\"{u}nbaum birthday issue, pp.~325--344.

\bibitem{jctb2021}
Leonardo Mart\'inez-Sandoval and Arnau Padrol, \emph{The convex dimension of
  hypergraphs and the hypersimplicial {V}an {K}ampen-{F}lores theorem}, J.
  Combin. Theory Ser. B \textbf{149} (2021), 23 -- 51.

\bibitem{MatousekLODG}
Ji\v{r}\'{\i} Matou\v{s}ek, \emph{Lectures on discrete geometry}, Graduate
  Texts in Mathematics, vol. 212, Springer-Verlag, New York, 2002.

\bibitem{Matschke2011}
Benjamin Matschke, Julian Pfeifle, and Vincent Pilaud,
  \emph{Prodsimplicial-neighborly polytopes}, Discrete Comput. Geom.
  \textbf{46} (2011), no.~1, 100--131.

\bibitem{NillPadrol2015}
Benjamin Nill and Arnau Padrol, \emph{The degree of point configurations:
  {E}hrhart theory, {T}verberg points and almost neighborly polytopes},
  European J. Combin. \textbf{50} (2015), 159--179.

\bibitem{OhPostnikovSpeyer2015}
Suho Oh, Alexander Postnikov, and David~E. Speyer, \emph{Weak separation and
  plabic graphs}, Proc. Lond. Math. Soc. (3) \textbf{110} (2015), no.~3,
  721--754.

\bibitem{OhsugiHibi1998}
Hidefumi Ohsugi and Takayuki Hibi, \emph{Normal polytopes arising from finite
  graphs}, J. Algebra \textbf{207} (1998), no.~2, 409--426.

\bibitem{OlarteSantos}
Jorge~Alberto Olarte and Francisco Santos, \emph{Hypersimplicial subdivisions},
  Preprint, \href{https://arxiv.org/abs/1906.05764}{\texttt{arXiv:1906.05764}},
  2019.

\bibitem{Onn2003}
Shmuel Onn, \emph{Convex matroid optimization}, SIAM J. Discrete Math.
  \textbf{17} (2003), no.~2, 249--253.

\bibitem{OnnRothblum2004}
Shmuel Onn and Uriel~G. Rothblum, \emph{Convex combinatorial optimization},
  Discrete Comput. Geom. \textbf{32} (2004), no.~4, 549--566.

\bibitem{PachSharir1998}
J\'{a}nos Pach and Micha Sharir, \emph{On the number of incidences between
  points and curves}, Combin. Probab. Comput. \textbf{7} (1998), no.~1,
  121--127.

\bibitem{Postnikov06}
Alexander Postnikov, \emph{Total positivity, {G}rassmannians, and networks},
  Preprint, \href{https://arxiv.org/abs/0609764}{\texttt{arXiv:0609764}}, 2006.

\bibitem{Postnikov2019}
\bysame, \emph{Positive {G}rassmannian and polyhedral subdivisions},
  {Proceedings of the International Congress of Mathematicians 2018 (ICM 2018),
  Rio de Janeiro, Brazil, August 1--9, 2018.} (Boyan {Sirakov}, Paulo~Ney {de
  Souza}, and Marcelo {Viana}, eds.), vol.~3, Hackensack, NJ: World Scientific,
  2019, pp.~3167--3196.

\bibitem{RoerigSanyal2012}
Thilo R\"{o}rig and Raman Sanyal, \emph{Non-projectability of polytope
  skeleta}, Adv. Math. \textbf{229} (2012), no.~1, 79--101.

\bibitem{Sanyal2009}
Raman Sanyal, \emph{Topological obstructions for vertex numbers of {M}inkowski
  sums}, J. Combin. Theory Ser. A \textbf{116} (2009), no.~1, 168--179.

\bibitem{SanyalZiegler2010}
Raman Sanyal and G\"{u}nter~M. Ziegler, \emph{Construction and analysis of
  projected deformed products}, Discrete Comput. Geom. \textbf{43} (2010),
  no.~2, 412--435.

\bibitem{Sharir2011}
Micha Sharir, \emph{An improved bound for {$k$}-sets in four dimensions},
  Combin. Probab. Comput. \textbf{20} (2011), no.~1, 119--129.

\bibitem{SST2001}
Micha Sharir, Shakhar Smorodinsky, and G\'{a}bor Tardos, \emph{An improved
  bound for {$k$}-sets in three dimensions}, Discrete Comput. Geom. \textbf{26}
  (2001), no.~2, 195--204, ACM Symposium on Computational Geometry (Hong Kong,
  2000).

\bibitem{Shephard1969}
Geoffrey~C. Shephard, \emph{Neighbourliness and radon's theorem}, Mathematika
  \textbf{16} (1969), no.~2, 273–275.

\bibitem{SkomraThomasse}
Mateusz Skomra and St{\'e}phan Thomass{\'e}, \emph{Convexly independent subsets
  of {M}inkowski sums of convex polygons}, Preprint,
  \href{https://arxiv.org/abs/1903.11287}{\texttt{arXiv:1903.11287}}, 2019.

\bibitem{SwanepoelValtr2010}
Konrad~J. Swanepoel and Pavel Valtr, \emph{Large convexly independent subsets
  of {M}inkowski sums}, Electron. J. Combin. \textbf{17} (2010), \#R146, 7.

\bibitem{Tiwary2014}
Hans~Raj Tiwary, \emph{On the largest convex subsets in {M}inkowski sums},
  Inform. Process. Lett. \textbf{114} (2014), no.~8, 405--407.

\bibitem{vanKampen1932}
Egbert~R. {van Kampen}, \emph{{Komplexe in euklidischen R\"aumen.}}, {Abh.
  Math. Semin. Univ. Hamb.} \textbf{9} (1932), 72--78 (German).

\bibitem{Villarreal1998}
Rafael~H. Villarreal, \emph{On the equations of the edge cone of a graph and
  some applications}, Manuscripta Math. \textbf{97} (1998), no.~3, 309--317.

\bibitem{Wagner2008}
Uli Wagner, \emph{{$k$}-sets and {$k$}-facets}, Surveys on discrete and
  computational geometry, Contemp. Math., vol. 453, Amer. Math. Soc.,
  Providence, RI, 2008, pp.~443--513.

\bibitem{Weibel2012}
Christophe Weibel, \emph{Maximal f-vectors of {M}inkowski sums of large numbers
  of polytopes}, Discrete Comput. Geom. \textbf{47} (2012), no.~3, 519--537.

\bibitem{ZieglerBOOK}
G\"{u}nter~M. Ziegler, \emph{Lectures on polytopes}, {Grad. Texts in Math.},
  vol. 152, Springer-Verlag, New York, 1995.

\bibitem{Ziegler2004}
\bysame, \emph{Projected products of polygons}, Electron. Res. Announc. Amer.
  Math. Soc. \textbf{10} (2004), 122--134.

\end{thebibliography}

\end{document}